\documentclass[11pt, twoside, leqno]{article}

\usepackage{amsmath}
\usepackage{amssymb}
\usepackage{titletoc}
\usepackage{mathrsfs}
\usepackage{amsthm}
\usepackage{indentfirst}
\usepackage{color}
\usepackage{txfonts}

\textwidth=15cm
\textheight=21cm
\oddsidemargin 0.46cm
\evensidemargin 0.46cm

\allowdisplaybreaks

\pagestyle{myheadings}\markboth{\footnotesize\rm\sc
Jingsong Sun, Guangheng Xie and Dachun Yang}
{\footnotesize\rm\sc Localized John--Nirenberg--Campanato Spaces}

\def\bint{{\ifinner\rlap{\bf\kern.30em--}
\int\else\rlap{\bf\kern.35em--}\int\fi}\ignorespaces}

\def\sbint{{\ifinner\rlap{\bf\kern.32em--}
\hspace{0.078cm}\int\else\rlap{\bf\kern.45em--}\int\fi}\ignorespaces}

\def\red{\color{red}}

\def\rr{\mathbb{R}}
\def\rn{\mathbb{R}^n}

\def\nn{\mathbb{N}}
\def\zz{\mathbb{Z}}

\def\lz{\lambda}

\def\ls{\lesssim}
\def\gs{\gtrsim}

\def\fz{\infty}
\def\az{\alpha}

\def\cl{{\mathcal L}}
\def\cM{{\mathcal M}}

\def\cQ{{\mathcal Q}}

\def\cX{{\mathcal X}}

\def\BMO{\mathop\mathrm{\,BMO\,}}

\def\r{\right}
\def\lf{\left}

\def\hs{\hspace{0.3cm}}
\def\noz{{\nonumber}}

\def\dfrac{\displaystyle\frac}

\def\r{\right}
\def\lf{\left}

\def\supp{{\mathop\mathrm{\,supp\,}}}

\def\loc{{\mathop\mathrm{\,loc\,}}}

\def\BMO{{\mathop\mathrm{\,BMO\,}}}

\def\eqref#1{(\ref{#1})}

\newtheorem{theorem}{Theorem}[section]
\newtheorem{lemma}[theorem]{Lemma}

\newtheorem{proposition}[theorem]{Proposition}

\theoremstyle{definition}
\newtheorem{remark}[theorem]{Remark}
\newtheorem{definition}[theorem]{Definition}

\numberwithin{equation}{section}

\begin{document}
\title{\bf\Large Localized John--Nirenberg--Campanato Spaces
\footnotetext{\hspace{-0.35cm} 2010 {\it
Mathematics Subject Classification}. Primary 42B35; Secondary 42B30, 42B25, 46E35. \endgraf
{\it Key words and phrases}. cube, Euclidean space,
localized John--Nirenberg--Campanato space, Hardy-kind space, local atom, duality.
\endgraf
This project is supported by the National
Natural Science Foundation of China (Grant Nos. 11571039, 11761131002 and 11671185).}}
\date{ }
\author{Jingsong Sun, Guangheng Xie and Dachun Yang \footnote{Corresponding author/{\red June 03, 2019}/Final version.}
}
\maketitle

\vspace{-0.8cm}

\begin{center}
\begin{minipage}{13cm}
{\small {\bf Abstract}\quad
Let $p\in(1,\infty)$, $q\in[1,\infty)$, $s\in{\mathbb Z}_{+}$, $\alpha\in[0,\infty)$
and $\mathcal{X}$ be $\mathbb R^n$ or a cube $Q_0\subsetneqq\mathbb R^n$.
In this article, the authors first introduce the localized John--Nirenberg--Campanato
space $jn_{(p,q,s)_{\alpha}}(\mathcal{X})$
and show that the localized Campanato space is the limit case of $jn_{(p,q,s)_{\alpha}}(\mathcal{X})$ as $p\to\infty$.
By means of local atoms and the weak-$*$ topology,
 the authors then introduce the localized Hardy-kind space
$hk_{(p',q',s)_{\alpha}}(\mathcal{X})$ which proves the predual space of
$jn_{(p,q,s)_{\alpha}}(\mathcal{X})$. Moreover, the authors prove that $hk_{(p',q',s)_{\alpha}}(\mathcal{X})$
is invariant when $1<q<p$, where $p'$ or $q'$ denotes the conjugate number of $p$ or $q$, respectively.
All these results are new even for the localized John--Nirenberg space.}
\end{minipage}
\end{center}

\vspace{0.2cm}

\section{Introduction}

Apart from the classical $\BMO$ space (the space of functions with bounded mean oscillation),
John and Nirenberg \cite{JN} also introduced a class of larger spaces,
which are now called the John--Nirenberg spaces $JN_p$ with $p\in(1,\fz)$.
The $\BMO$ space is closely related to the $JN_p$ spaces.
Particularly, for any cube $Q_0\subsetneqq\rn$, $\BMO(Q_0)$ is just the
limit case of $JN_p(Q_0)$ as $p\to\fz$; see, for instance, \cite{C2,BB,TYY}.
Although $JN_p$ spaces have not been studied as systematically as the
$\BMO$ space, $JN_p$ spaces and their variants still attract much attention.
For instance, Campanato \cite{C2} used the embedding of $JN_p$ into weak $L^p$
to prove the Stampacchia interpolation theorem;
Aalton et al. \cite{ABKY} introduced the notion of $JN_p$ on the doubling metric space and showed
the corresponding John--Nirenberg inequality;
Hurri-Syrj\"{a}nen et al. \cite{HNV} and Marola and Saari \cite{MS}
established Reimann--Rychener local-to-global results for
$JN_p$ in the setting of $\rn$ or metric measure spaces, respectively;
Berkovits et al. showed in \cite{BKM} that $JN_p$ embeds into weak $L^p$
both in Euclidean spaces with dyadic cubes and in spaces of homogeneous type with metric balls;
Dafni et al. \cite{DHKY} proved $L^p\subsetneqq JN_p$ and introduced
a Hardy-kind space which further proves the predual space of $JN_p$.

It is well known that Fefferman and Stein \cite{FS} showed
that the dual of the Hardy space $H^1(\rn)$ is the space $\BMO(\rn)$.
Later, Coifman and Weiss \cite{CW}
gave a more generalized result via proving that, for any given $p\in(0,1]$,
the dual of the Hardy space $H^{p}(\rn)$ is the Campanato space
$\mathcal{C}_{\frac{1}{p}-1,1,\lfloor n(\frac{1}{p}-1)\rfloor}(\rn)$ introduced in \cite{C1},
where $\lfloor n(\frac{1}{p}-1)\rfloor$ denotes the largest integer not greater than
$n(\frac{1}{p}-1)$. Notice that $\mathcal{C}_{0,1,0}(\rn)$ coincides with $\BMO(\rn)$.
Very recently,
Tao et al. \cite{TYY} introduced the John--Nirenberg--Campanato space, which is a generalization of
the classical John--Nirenberg space and is also closely related to the Campanato space.
In the same article, Tao et al. also found the predual space of
the John--Nirenberg--Campanato space and showed the corresponding John--Nirenberg type inequality.

On the other hand, the localized $\BMO(\rn)$ space, denoted by ${\rm bmo}\,(\rn)$,
was originally introduced by Goldberg \cite{G}.
In the same article, Goldberg also introduced the localized Campanato space
$\Lambda_{\alpha}(\rn)$ with $\alpha\in(0,\fz)$,
which proves the dual space of the local Hardy space. Later,
Jonsson et al. \cite{JSW} constructed the local Hardy space and the localized
Campanato space on the subset of $\rn$;
Chang \cite{Ch} studied the localized Campanato space on bounded Lipschitz domains;
Chang et al. \cite{CDS}
studied the local Hardy space and its dual space on smooth domains as well as their applications to
boundary value problems.
For more articles concerning localized $\BMO$ or Campanato spaces or their variants,
we refer the reader to, for instance, \cite{M,YYZ,YZ,YY,DY}.
However, a theory on localized John--Nirenberg--Campanato spaces, even on localized
John--Nirenberg spaces, is still missing.

Let $p\in(1,\infty)$, $q\in[1,\infty)$, $s\in{\mathbb Z}_{+}$, $\alpha\in[0,\infty)$
and $\mathcal{X}$ be $\mathbb R^n$ or a cube $Q_0\subsetneqq\mathbb R^n$.
In this article, we first introduce the localized John--Nirenberg--Campanato
space $jn_{(p,q,s)_{\alpha}}(\mathcal{X})$
and show that the localized Campanato space is the limit case of $jn_{(p,q,s)_{\alpha}}(\mathcal{X})$ as $p\to\infty$.
By means of local atoms and the weak-$*$ topology,
we then introduce the localized Hardy-kind space
$hk_{(p',q',s)_{\alpha}}(\mathcal{X})$ which proves the predual space of
$jn_{(p,q,s)_{\alpha}}(\mathcal{X})$. Moreover, we prove that $hk_{(p',q',s)_{\alpha}}(\mathcal{X})$
is invariant when $1<q<p$, where $p'$ or $q'$ denotes the conjugate number of $p$ or $q$, respectively.
All these results are new even for the localized John--Nirenberg space.

To be precise, this article is organized as follows.

In Section \ref{sec.ljnc}, we first introduce the notion of
the localized John--Nirenberg--Campanato space $jn_{(p,q,s)_{\az}}(\cX)$ with admissible $(p,q,s,\az)$,
which is a class of newly-defined spaces even for the special case, the localized John--Nirenberg spaces;
see Definition \ref{def.jnpqs} below. Then we establish the relationships between $jn_{(p,q,s)_{\az}}(\cX)$
and the John--Nirenberg--Campanato space $JN_{(p,q,s)_{\az}}(\cX)$ from \cite{TYY}
(see Propositions \ref{rem.jnandJN} and \ref{p.a} below).
Concretely, via the dyadic subcubes and some ideas from the proofs of \cite[Theorem 4.1]{JSW},
we prove that $jn_{(p,q,s)_{\az}}(\cX)=JN_{(p,q,s)_{\az}}(\cX)\cap L^p(\cX)$ with equivalent norms,
where $p\in(1,\fz)$, $q\in[1,p]$, $s\in\zz_{+}$ and $\az\in (0,\fz)$.
Moreover, we also show that the localized Campanato space is the
limit case of $jn_{(p,q,s)_{\az}}(\mathcal{X})$ as $p\to\fz$;
see Propositions \ref{prop.jncam} and \ref{prop.jncam2} below.

In Section \ref{sec.eninjn},
by the John--Nirenberg lemma for $JN_{(p,q,s)_{\az}}(\cX)$ in \cite[Proposition 1.19]{TYY}
(or, see Lemma \ref{b1} below) and
the continuous embedding $jn_{(p,q,s)_{\az}}(\mathcal{X})\subset JN_{(p,q,s)_{\az}}(\mathcal{X})$
(see Proposition \ref{rem.jnandJN} below),
 we first show that $jn_{(p,q,s)_{\az}}(\mathcal{X})$ is invariant
 on $q\in[1,p)$ with admissible $(p,q,s,\az)$; see Proposition \ref{prop.jneq1} below.
Via selecting appropriated cubes,
we then establish the relationship between $jn_{(p,q,s)_{\az}}(\cX)$ and Lebesgue spaces;
see Proposition \ref{prop.jneq2} below.

Section \ref{sec.lhk} is aimed at constructing the predual space of $jn_{(p,q,s)_{\az}}(\cX)$
with $p\in(1,\fz)$, $q\in[1,\fz)$, $s\in\zz_{+}$ and $\az\in[0,\fz)$.
For this purpose, using the local atoms and the weak-$\ast$ topology,
we introduce the localized Hardy-kind space $hk_{(p',q',s)_{\az}}(\cX)$;
see  Definition \ref{def.hkvws} below.
Then, via making full use of ``local" property and
borrowing some ideas from the proofs of \cite[Theorem 6.6]{DHKY} and \cite[Theorem 1.16]{TYY},
we prove that $hk_{(p',q',s)_{\az}}(\cX)$ is the
predual space of $jn_{(p,q,s)_{\az}}(\mathcal{X})$;
see Theorem \ref{theo.dual} below.
Remarkably, differently from the $L^p$-convergence which was used by
Dafni et al. \cite{DHKY} to introduce the predual space of the John--Nirenberg space,
we use the weak-$\ast$ convergence on $(jn_{(p,q,s)_{\az}}(\mathcal{X}))^{\ast}$ to introduce $hk_{(p',q',s)_{\az}}(\cX)$.
This allows us to exchange the order of the integration and the sum of the sequence of constant multiples
of local atoms in the proof of the duality theorem; see Remarks
\ref{rem.poly} and \ref{rem.hkvws} below.
We point out that, for any given $p\in(1,\fz)$, $q\in[1,p)$ and cube $Q_0\subsetneqq\rn$,
$hk_{(p',q',0)_{0}}(Q_0)$ is equivalent to a new localized Hardy-kind space
$\widehat{hk}_{p',q'}(Q_0)$ which is defined by the same way as that used in
\cite[Definition 6.1]{DHKY}; see Proposition \ref{prop.hk1eq} below.

In Section \ref{sec.eninhk},
via decomposing the local $w$-atom, with $w\in(1,\infty)$, into the sum of the sequence of
scalar multiples of local $\infty$-atoms and a polynomial
in the sense of weak-$\ast$ topology, and
some arguments similar to those used in the proof of \cite[Proposition 6.4]{DHKY}
(see also \cite[Proposition 1.23]{TYY}),
we show that, for appropriate indices $v$, $s$ and $\az$,
$hk_{(v,w,s)_{\az}}(\cX)$
is invariant on $w\in(v,\fz]$;
 see Proposition \ref{prop.hkeq1} below.
 As a counterpart of Proposition \ref{prop.jneq2},
  we establish the relation between
 localized Hardy-kind spaces and Lebesgue spaces;
 see Proposition \ref{prop.hkeq2} below.
 For any $v\in(1,\fz)$, $w\in(1,\fz]$ and cube $Q_0\subsetneqq\rn$,
we then establish the
 relation between $hk_{(v,w,0)_{0}}(Q_0)$ and the localized Hardy space $h^1(Q_0)$;
 see Proposition \ref{prop.hkandhp} below.

Finally, we state some conventions on notation. We always let
$\nn:=\{1,2,3,\ldots\}$ and $\zz_{+}:=\nn\cup\{0\}$.
The \emph{symbol} $C$ always denotes a positive constant independent
of the main parameters but may vary from line to line.
Constants with subscripts, such as $c_0$ and $C_{(s)}$, are invariant in different occurrences.
If $f\le Cg$, we then write $f\ls g$ or $g\gs f$ and, if $f\ls g\ls f$, we then write
$f\sim g$. We also use the following
convention: If $f\le Cg$ and $g=h$ or $g\le h$, we then write $f\ls g\sim h$
or $f\ls g\ls h$, \emph{rather than} $f\ls g=h$
or $f\ls g\le h$. For normed spaces $\mathbb{X}_1$ and $\mathbb{X}_2$,
the \emph{symbol} $\mathbb{X}_1\subset \mathbb{X}_2$ means that
there exists a positive constant $C$ such that,
for any $f\in\mathbb{X}_1$, $f\in\mathbb{X}_2$ and $\|f\|_{\mathbb{X}_2}\le C\|f\|_{\mathbb{X}_1}$.
For any set $E\subset\rn$, the symbol $\mathbf{1}_E$ denotes its \emph{characteristic function}
and the symbol $|E|$ its \emph{Lebesgue measure}. For any cube $Q$, we use the symbol
$\ell(Q)$ to denote its \emph{side length}. We also let $\ell(\rn):=\fz$.
For any set $\cM$, the symbol $\#\cM$ represents its \emph{cardinality}.
Also, for any $p\in[1,\fz]$, let $p'$ be the \emph{conjugate index} of $p$, namely, $\frac{1}{p}+\frac{1}{p'}=1$.
For any $a\in\rr$, the \emph{symbol} $\lfloor a\rfloor$ denotes the largest integer not greater than $a$.

\section{Localized John--Nirenberg--Campanato spaces\label{sec.ljnc}}

In this section, we first introduce the localized John--Nirenberg--Campanato
space and then establish the relations among
the localized John--Nirenberg--Campanato space, the John--Nirenberg--Campanato
space and the localized Campanato space.

We first introduce some symbols.
Throughout the article, the \emph{symbol $\mathcal{X}$} always denotes $\rn$ or a cube $Q_0\subsetneqq\rn$.
In what follows, for any given $p\in[1,\fz)$, the \emph{space $L^p(\mathcal{X})$} is defined to be the set of
all measurable functions $f$ such that
$\|f\|_{L^p(\mathcal{X})}:=(\int_{\mathcal{X}}|f(x)|^p\,dx)^{\frac{1}{p}}<\fz$
and the \emph{symbol $L^p_{\loc}(\mathcal{X})$} denotes the collection of all measurable functions $f$ such that
$\|f\mathbf{1}_E\|_{L^p(\mathcal{X})}<\fz$ for any bounded set $E\subset\cX$.
The \emph{symbol} $L^{\fz}(\mathcal{X})$ denotes the set of all measurable functions $f$
such that $\|f\|_{L^{\fz}(\mathcal{X})}<\fz$,
where the \emph{norm} $\|f\|_{L^{\fz}(\mathcal{X})}$ denotes the essential supremum of $f$ on $\cX$.

Let $s\in \zz_{+}$. In what follows, we use the \emph{symbol} $\mathcal{P}_s(\cX)$ to denote the set of
all polynomials of degree not greater than $s$ on $\cX$ and the \emph{symbol} $Q$ a cube of $\rn$ with finite length,
but, not necessary to be closed.
For any integrable function $f$ on a cube $Q\subset\cX$, let
$$f_Q:=\fint_Qf:=\frac{1}{|Q|}\int_Q f,$$
here and hereafter, in all integral representations, if there exists no confusion,
we omit the differential $dx$.
Moreover, for any $s\in\zz_{+}$, the \emph{symbol $P^{(s)}_{Q}(f)$} denotes
a unique polynomial from $\mathcal{P}_s(Q)$ such that
$$
\int_Q\lf[f(x)-P^{(s)}_{Q}(f)(x)\r]x^{\beta}dx=0,\quad \forall\,|\beta|\le s,
$$
where $\beta:=(\beta_1,\ldots,\beta_n)\in \zz_{+}^n$ and $|\beta|:=\sum^n_{i=1}\beta_i$. Furthermore, it holds true that
\begin{align}
\label{eq.dxsC}
\sup_{x\in Q}\lf|P^{(s)}_{Q}(f)(x)\r|\le C_{(s)}\fint_{Q}|f|,
\end{align}
where the constant $C_{(s)}\in[1,\fz)$ only depends on $s$. For more details on $P^{(s)}_{Q}(f)$, see, for instance, \cite{LY,Lu,TW}.
Clearly, if $s=0$, then $P^{(s)}_{Q}(f)=f_Q$. For any $c_0\in(0,\ell(\cX))$, let
$$ P^{(s)}_{Q,c_0}(f):=\left\{ \begin{array}{l@{\quad\quad \mbox{when}\,\,}l}
P^{(s)}_{Q}(f) & \ell(Q)< c_0, \\ 0 & \ell(Q)\ge c_0.
\end{array}\right.   $$

Now, we recall the definition of the localized Campanato space, which was first introduced by Goldberg in \cite[Theorem 5]{G}.

\begin{definition}
\label{def.cam}
Let $q\in[1,\fz)$, $s\in\zz_{+}$ and $\az\in[0,\fz)$. Fix $c_0\in (0,\ell(\mathcal{X}))$.
The \emph{localized Campanato space}
$\Lambda_{(\az,q,s)}(\mathcal{X})$ is defined to be the set of all measurable functions
$f \in L^q_{\loc}(\mathcal{X})$ such that
$$\|f\|_{\Lambda_{(\az,q,s)}(\mathcal{X})}
:=\sup\lf|Q\r|^{-\az}\lf[\fint_Q\lf|f-P^{(s)}_{Q,c_0}(f)\r|^q\r]^{\frac{1}{q}}<\fz,$$
where the supremum is taken over all cubes $Q$ in $\mathcal{X}$.
\end{definition}

\begin{remark}\label{l1}
\begin{itemize}
\item[\rm{(i)}]
If $\cX:=\rn$, $q=1$, $s=0$ , $\az=0$ and $c_0=1$,
then $\Lambda_{(\az,q,s)}(\cX)$ is just the local version of $\BMO(\rn)$, $\mathrm{bmo}\,(\rn)$, in
Goldberg \cite{G}. We also write $\mathrm{bmo}\,(\cX):=\Lambda_{(0,1,0)}(\cX)$.
\item[\rm{(ii)}]
In Definition \ref{def.cam}, if $P^{(s)}_{Q,c_0}(f)$ is replaced by $P^{(s)}_{Q}(f)$,
then $\Lambda_{(\az,q,s)}(\mathcal{X})$ becomes the Campanato space
$\mathcal{C}_{(\az,q,s)}(\cX)$, which was first introduced in \cite{C1}.
\end{itemize}
\end{remark}

In what follows, we fix the constant $c_0\in(0,\ell(\mathcal{X}))$.
Now, we introduce the localized John--Nirenberg--Campanato space.

\begin{definition}\label{def.jnpqs}
Let $p\in(1,\fz)$, $q\in[1,\fz)$, $s\in\zz_{+}$ and $\az\in[0,\fz)$.
Fix the constant $c_0\in(0,\ell(\mathcal{X}))$.
The \emph{localized John--Nirenberg--Campanato space}
$jn_{(p,q,s)_{\az,c_0}}(\mathcal{X})$ is defined to be the set of all functions $f\in L^q_{\loc}(\mathcal{X})$ such that
$$\|f\|_{jn_{(p,q,s)_{\az,c_0}}(\mathcal{X})}:=\sup\lf[\sum_{j\in\nn}\lf|Q_j\r|\lf\{\lf|Q_j\r|^{-\az}
\lf[\fint_{Q_j}\lf|f-P^{(s)}_{Q_j,c_0}(f)\r|^q\r]^{\frac{1}{q}}\r\}^p\r]^{\frac{1}{p}}<\fz,$$
where the supremum is taken over all collections of interior pairwise disjoint cubes
$\{Q_j\}_{j\in\nn}$ in $\mathcal{X}$.
\end{definition}

\begin{remark}
In Definition \ref{def.jnpqs},
if $P^{(s)}_{Q_j,c_0}(f)$ is replaced by $P^{(s)}_{Q_j}(f)$,
then we obtain the \emph{John--Nirenberg--Campanato space}
$JN_{(p,q,s)_{\az}}(\cX)$, which was originally introduced in  \cite[Definition 1.2]{TYY}.
Let $JN_p(\cX):=JN_{(p,1,0)_0}(\cX).$ If $Q_0\subsetneqq\rn$ is a cube,
$JN_p(Q_0)$ is just the classical John--Nirenberg space, which originated from \cite{JN}.
\end{remark}

Now, we show that $jn_{(p,q,s)_{\az,c_0}}(\cX)$ in Definition \ref{def.jnpqs}
is independent of the choice of the positive constant $c_0$.

\begin{proposition}\label{prop.jnc}
Let $p\in(1,\fz)$, $q\in[1,\fz)$, $s\in\zz_{+}$, $\az\in[0,\fz)$, $c_1\in(0,\ell(\mathcal{X}))$
and $c_2\in(c_1,\ell(\mathcal{X}))$. Then
$jn_{(p,q,s)_{\az,c_1}}(\mathcal{X})=jn_{(p,q,s)_{\az,c_2}}(\mathcal{X})$ with equivalent norms.
\end{proposition}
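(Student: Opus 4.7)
The plan is to prove both inequalities $\|f\|_{jn_{(p,q,s)_{\az,c_1}}(\mathcal{X})}\lesssim \|f\|_{jn_{(p,q,s)_{\az,c_2}}(\mathcal{X})}$ and the reverse. Fix any collection of interior-pairwise disjoint cubes $\{Q_j\}_{j\in\nn}$ in $\cX$, and partition the index set into three pieces: $A:=\{j:\ell(Q_j)<c_1\}$, $B:=\{j:c_1\le\ell(Q_j)<c_2\}$, and $C:=\{j:\ell(Q_j)\ge c_2\}$. For $j\in A$ we have $P^{(s)}_{Q_j,c_1}(f)=P^{(s)}_{Q_j,c_2}(f)=P^{(s)}_{Q_j}(f)$, while for $j\in C$ we have $P^{(s)}_{Q_j,c_1}(f)=P^{(s)}_{Q_j,c_2}(f)=0$; thus the summands over $A\cup C$ are identical in the two norms, and each may be bounded by the other norm using the subcollection $\{Q_j\}_{j\in A\cup C}$. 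Hence the whole problem reduces to comparing the contributions coming from $j\in B$, where $P^{(s)}_{Q_j,c_1}(f)=0$ but $P^{(s)}_{Q_j,c_2}(f)=P^{(s)}_{Q_j}(f)$.

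The easy direction is $\|f\|_{jn_{(p,q,s)_{\az,c_2}}}\lesssim \|f\|_{jn_{(p,q,s)_{\az,c_1}}}$. For $j\in B$, the triangle inequality together with \eqref{eq.dxsC} gives
$$\lf[\fint_{Q_j}\lf|f-P^{(s)}_{Q_j}(f)\r|^q\r]^{1/q}\le \lf[\fint_{Q_j}|f|^q\r]^{1/q}+\sup_{Q_j}\lf|P^{(s)}_{Q_j}(f)\r|\le (1+C_{(s)})\lf[\fint_{Q_j}|f|^q\r]^{1/q},$$
where in the last step I used H\"older to absorb $\fint_{Q_j}|f|$ into $[\fint_{Q_j}|f|^q]^{1/q}$. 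Summing the $p/q$-power over $j\in B$ and invoking $\|f\|_{jn_{(p,q,s)_{\az,c_1}}}$ on the subcollection $\{Q_j\}_{j\in B}$ (where $P^{(s)}_{Q_j,c_1}(f)=0$) closes this direction.

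The harder direction $\|f\|_{jn_{(p,q,s)_{\az,c_1}}}\lesssim \|f\|_{jn_{(p,q,s)_{\az,c_2}}}$ is the main obstacle: for $j\in B$ we must dominate $[\fint_{Q_j}|f|^q]^{1/q}$ by the $c_2$-norm, but a naive triangle inequality only yields $[\fint_{Q_j}|f|^q]^{1/q}\le [\fint_{Q_j}|f-P^{(s)}_{Q_j}(f)|^q]^{1/q}+C_{(s)}[\fint_{Q_j}|f|^q]^{1/q}$, which does not close since $C_{(s)}\ge 1$. To bypass this, I partition $\cX$ into an interior-disjoint ``reference grid'' $\{G_k\}$ of cubes with $\ell(G_k)\in[c_2,2c_2)$ (possible since $c_2<\ell(\cX)$), for which $P^{(s)}_{G_k,c_2}(f)=0$; hence by the very definition of the $c_2$-norm applied to $\{G_k\}$,
$$\sum_k |G_k|^{1-\az p}\lf[\fint_{G_k}|f|^q\r]^{p/q}\le \|f\|_{jn_{(p,q,s)_{\az,c_2}}(\mathcal{X})}^{p}.$$

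For each $j\in B$ I use a bounded-overlap argument: since $\ell(Q_j)<c_2\le\ell(G_k)$, $Q_j$ meets at most $2^n$ of the $G_k$, and since the $Q_j$'s are disjoint with $|Q_j|\ge c_1^n$ while the $G_k$'s have $|G_k|\le(2c_2)^n$, each $G_k$ is met by at most $M_0:=(5c_2/c_1)^n$ cubes $Q_j$ (a constant depending only on $c_1,c_2,n$). Picking a $k(j)$ maximizing $\int_{G_k}|f|^q$ among those $G_k$ meeting $Q_j$ yields $\fint_{Q_j}|f|^q\le 2^n(c_2/c_1)^n\fint_{G_{k(j)}}|f|^q$, and the factor $|Q_j|^{1-\az p}$ is comparable to $|G_{k(j)}|^{1-\az p}$ since both volumes live in $[c_1^n,(2c_2)^n]$. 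Summing and using that each $k$ is chosen by at most $M_0$ indices $j$ then gives
$$\sum_{j\in B}|Q_j|^{1-\az p}\lf[\fint_{Q_j}|f|^q\r]^{p/q}\le C\sum_k |G_k|^{1-\az p}\lf[\fint_{G_k}|f|^q\r]^{p/q}\le C\|f\|_{jn_{(p,q,s)_{\az,c_2}}(\mathcal{X})}^{p},$$
with $C=C(c_1,c_2,n,p,q,\az)$. Combining with the trivial bounds over $A\cup C$ finishes this direction, and hence the proposition.
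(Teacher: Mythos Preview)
Your proposal is correct and follows essentially the same approach as the paper's proof: both directions are argued identically, and for the hard direction both reduce to controlling $\sum_{j\in B}|Q_j|^{1-\alpha p}\bigl[\fint_{Q_j}|f|^q\bigr]^{p/q}$ by the $c_2$-norm via a reference grid of cubes with side length in $[c_2,2c_2)$ (the paper's $\{R_i\}$, your $\{G_k\}$) together with the same bounded-overlap counts (each $Q_j$ meets at most $2^n$ grid cubes; each grid cube meets at most $\sim (c_2/c_1)^n$ of the $Q_j$). The only cosmetic difference is that you pick a single maximizing grid cube $G_{k(j)}$ for each $j$ and then count preimages, whereas the paper applies Minkowski's inequality in $\ell^{p/q}$ to split into $2^n$ layers before the double-counting; these are interchangeable standard moves.
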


\begin{proof} Let $p,$ $q,$ $s,$ $\az,$ $c_1$ and $c_2$ be as in this proposition.
Let $\{Q_j\}_{j\in\nn}$ be any interior pairwise disjoint cubes in $\mathcal{X}$ and
$$J:=\lf\{j\in\nn:\,\, c_{1}\le\ell(Q_j)<c_2\r\}.$$

We first prove $jn_{(p,q,s)_{\az,c_1}}(\mathcal{X})\subset jn_{(p,q,s)_{\az,c_2}}(\mathcal{X})$.
Let $f\in jn_{(p,q,s)_{\az,c_1}}(\mathcal{X})$. For any $j\in J$, by the definition of $P^{(s)}_{Q_j,c_0}(f)$, we have
$$
P^{(s)}_{Q_j,c_2}(f)=P^{(s)}_{Q_j}(f) \quad \mbox{and} \quad   P^{(s)}_{Q_j,c_1}(f)=0.$$
From this, the Minkowski inequality, \eqref{eq.dxsC} and the H\"{o}lder inequality, it follows that, for any $j\in J$,
\begin{align}\label{i1}
\lf[\fint_{Q_j}\lf|f-P^{(s)}_{Q_j,c_2}(f)\r|^q\r]^{\frac{1}{q}}
&= \lf[\fint_{Q_j}\lf|f-P^{(s)}_{Q_j}(f)\r|^q\r]^{\frac{1}{q}}
\le\lf(\fint_{Q_j}|f|^q\r)^{\frac{1}{q}}+\lf[\fint_{Q_j}\lf|P^{(s)}_{Q_j}(f)\r|^q\r]^{\frac{1}{q}}\\
&\lesssim\lf(\fint_{Q_j}|f|^q\r)^{\frac{1}{q}}
\sim\lf[\fint_{Q_j}\lf|f-P^{(s)}_{Q_j,c_1}(f)\r|^q\r]^{\frac{1}{q}}.\noz
\end{align}
Moreover, for any $j\in\nn\setminus J$, we have $P^{(s)}_{Q_j,c_2}(f)=P^{(s)}_{Q_j,c_1}(f)$,
which, together with \eqref{i1}, implies that, for any $j\in\nn$,
$$\lf[\fint_{Q_j}\lf|f-P^{(s)}_{Q_j,c_2}(f)\r|^q\r]^{\frac{1}{q}}
\lesssim\lf[\fint_{Q_j}\lf|f-P^{(s)}_{Q_j,c_1}(f)\r|^q\r]^{\frac{1}{q}}.$$
From this, the arbitrariness of $\{Q_j\}_{j\in\nn}$ and Definition \ref{def.jnpqs}, it follows that
$$\|f\|_{ jn_{(p,q,s)_{\az,c_2}}(\mathcal{X})}\lesssim \|f\|_{ jn_{(p,q,s)_{\az,c_1}}(\mathcal{X})}.$$
This proves $jn_{(p,q,s)_{\az,c_1}}(\mathcal{X})\subset jn_{(p,q,s)_{\az,c_2}}(\mathcal{X})$.

Next, we show $jn_{(p,q,s)_{\az,c_2}}(\mathcal{X})\subset jn_{(p,q,s)_{\az,c_1}}(\mathcal{X})$.
Let $f\in jn_{(p,q,s)_{\az,c_2}}(\mathcal{X})$.
By the definition of $J$, the Minkowski inequality and Definition \ref{def.jnpqs}, we have
\begin{align}\label{eq.jnc1}
&\lf(\sum_{j\in\nn}\lf|Q_j\r|\lf\{\lf|Q_j\r|^{-\az}
      \lf[\fint_{Q_j}\lf|f-P^{(s)}_{Q_j,c_1}(f)\r|^q\r]^{\frac{1}{q}}\r\}^p\r)^{\frac{1}{p}}\\
&\quad\le \lf(\sum_{j\in \nn\setminus J}\lf|Q_j\r|\lf\{\lf|Q_j\r|^{-\az}
      \lf[\fint_{Q_j}\lf|f-P^{(s)}_{Q_j,c_2}(f)\r|^q\r]^{\frac{1}{q}}\r\}^p\r)^{\frac{1}{p}}
+\lf\{\sum_{j\in J}\lf|Q_j\r|\lf[\lf|Q_j\r|^{-\az}
      \lf(\fint_{Q_j}\lf|f\r|^q\r)^{\frac{1}{q}}\r]^p
      \r\}^{\frac{1}{p}}\notag\\
&\quad\lesssim\|f\|_{jn_{(p,q,s)_{\az,c_2}}(\mathcal{X})}+
\lf[\sum_{j\in J}\lf(\int_{Q_j}\lf|f\r|^q\r)^{\frac{p}{q}}\r]^{\frac{1}{p}}=:\|f\|_{jn_{(p,q,s)_{\az,c_2}}(\mathcal{X})}+{\rm I_1}.\notag
\end{align}
Now, we estimate ${\rm I_1}$.
If $\cX=\rn$, let $l_1:=c_2$ and if $\cX\subsetneqq\rn$ is a cube,
let $l_1:=\ell(\cX)(\lfloor\frac{\ell(\mathcal{X})}{c_2}\rfloor)^{-1}$.
Hence, $l_1\in[c_2,2c_2)$.
Choose interior pairwise disjoint cubes $\{R_i\}_{i\in\nn}$ in $\mathcal{X}$ such that
$\ell (R_i)=l_1$ for any $i\in\nn$ and $\mathcal{X}=\bigcup_{i\in \nn}R_i$.
For any $j \in J$, let
$\mathcal{R}_j:=\{R_i:\,\,R_i\cap Q_j\neq\emptyset\}$.
Then $M_j:=\#\mathcal{R}_j\le 2^n$. Rewrite $\mathcal{R}_j$ as $\{R_{j,k}\}^{M_j}_{k=1}$
and let $R_{j,k}:=\emptyset$ for any integer $k\in(M_j, 2^n]$.
For any $i\in\nn$, let
$$\mathcal{Q}_i:=\{Q_j:\,\,j\in J,\,\, Q_j\cap R_i\neq\emptyset\}.$$
Then $\#\mathcal{Q}_i\le (\frac{l_1}{c_1}+2)^n\le(\frac{2c_2}{c_1}+2)^n$.
From this and the Minkowski inequality, we deduce that
\begin{align*}
  {\rm I_1}
  &=\lf[\sum_{j\in J}\lf(\int_{Q_j}\lf|\sum^{2^n}_{k=1}f\mathbf{1}_{R_{j,k}}\r|^q\r)^{\frac{p}{q}}\r]^{\frac{1}{p}}
       \le\sum^{2^n}_{k=1}\lf[\sum_{j\in J}\lf(\int_{R_{j,k}}|f|^q\r)^{\frac{p}{q}}\r]^{\frac{1}{p}}\\
  &\le\sum^{2^n}_{k=1}\lf[\sum_{j\in J}\sum_{\{i\in\nn:\,\,R_i\cap
       Q_j\neq\emptyset\}}\lf(\int_{R_i}|f|^q\r)^{\frac{p}{q}}\r]^{\frac{1}{p}}\notag\\
  &=\sum^{2^n}_{k=1}l_1^{n\lf(\az+\frac{1}{q}-\frac{1}{p}\r)}\lf\{\sum_{i\in\nn}\sum_{\{j\in J:\,\,R_i\cap
       Q_j\neq\emptyset\}}|R_i|
       \lf[|R_i|^{-\az}\lf(\fint_{R_i}|f|^q\r)^{\frac{1}{q}}\r]^p\r\}^{\frac{1}{p}}\notag\\
  &\le\max\lf\{1,2^{\az+\frac{1}{q}-\frac{1}{p}}\r\}c_2^{n\lf(\az+\frac{1}{q}-\frac{1}{p}\r)}2^n
       \lf(\frac{2c_2}{c_1}+2\r)^{\frac{n}{p}}\|f\|_{jn_{(p,q,s)_{\az,c_2}}(\cX)}\notag .
\end{align*}
Combining this, \eqref{eq.jnc1} and the arbitrariness of $\{Q_j\}_{j\in\nn}$, we have $f\in jn_{(p,q,s)_{\az,c_1}}(\cX)$ and
$$\|f\|_{jn_{(p,q,s)_{\az,c_1}}(\cX)}\ls \|f\|_{jn_{(p,q,s)_{\az,c_2}}(\cX)}.$$
Thus,
$jn_{(p,q,s)_{\az,c_2}}(\mathcal{X})\subset jn_{(p,q,s)_{\az,c_1}}(\mathcal{X})$.
This finishes the proof of Proposition \ref{prop.jnc}.
\end{proof}

\begin{remark}\label{rem.jnpqs}
Based on Proposition \ref{prop.jnc}, in what follows, we write $jn_{(p,q,s)_{\az}}(\mathcal{X}):=jn_{(p,q,s)_{\az,c_0}}(\mathcal{X})$.
Especially, if $q=1$, $s=0$ and $\alpha=0$, then $jn_{(p,q,s)_{\az}}(\mathcal{X})$ becomes the \emph{localized  John--Nirenberg space}
$jn_{p}(\cX):=jn_{(p,1,0)_0}(\cX)$, which is also a new space.
\end{remark}

The following proposition indicates that the localized John--Nirenberg--Campanato space is a Banach space.

\begin{proposition}
\label{prop.jncom}
Let $p\in(1,\fz)$, $q\in[1,\fz)$, $s\in\zz_{+}$ and $\az\in[0,\fz)$.
Then $jn_{(p,q,s)_{\az}}(\mathcal{X})$ is a Banach space.
\end{proposition}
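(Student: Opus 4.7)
The plan is to verify the norm axioms directly and then prove completeness by the classical two-step scheme: extract a limit in a weaker topology (here $L^q_{\loc}(\mathcal{X})$) and then use a lower-semicontinuity argument to upgrade it to convergence in the $jn$-norm. The only feature specific to the local setting is the dichotomy $\ell(Q)<c_0$ versus $\ell(Q)\ge c_0$ in the definition of $P^{(s)}_{Q,c_0}$, and this dichotomy is exploited twice.

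For the norm axioms, positivity and homogeneity are immediate, and the triangle inequality follows by successive Minkowski inequalities in $L^q(Q_j,dx/|Q_j|)$ and in $\ell^p$ together with the linearity of $f\mapsto P^{(s)}_{Q,c_0}(f)$ (the case split depends on $Q$ alone). The delicate point is that $\|f\|_{jn_{(p,q,s)_{\az}}(\mathcal{X})}=0$ should force $f=0$ a.e., rather than only $f\in\mathcal{P}_s$ as in $JN_{(p,q,s)_{\az}}(\mathcal{X})$. For this, I would use any cube $Q\subset\mathcal{X}$ with $\ell(Q)\ge c_0$, for which by definition $P^{(s)}_{Q,c_0}(f)\equiv 0$: testing the $jn$-norm against the singleton collection $\{Q\}$ yields
$$|Q|^{\frac{1}{p}-\az}\lf(\fint_Q|f|^q\r)^{\frac{1}{q}}\le\|f\|_{jn_{(p,q,s)_{\az}}(\mathcal{X})},$$
and since $c_0<\ell(\mathcal{X})$, such cubes cover $\mathcal{X}$, which forces $f=0$ a.e.

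For completeness, let $\{f_k\}_{k\in\nn}$ be Cauchy in $jn_{(p,q,s)_{\az}}(\mathcal{X})$. Applying the displayed inequality above to $f_k-f_l$ shows that $\{f_k\}$ is Cauchy in $L^q(Q)$ for every cube $Q$ with $\ell(Q)\ge c_0$; covering bounded subsets of $\mathcal{X}$ by finitely many such cubes one deduces Cauchy convergence in $L^q_{\loc}(\mathcal{X})$, hence the existence of a limit $f\in L^q_{\loc}(\mathcal{X})$. To promote this to $jn$-convergence, I would fix any interior pairwise disjoint collection $\{Q_j\}_{j\in\nn}\subset\mathcal{X}$ and any finite $N\subset\nn$. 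On each $Q_j$ with $j\in N$ one has $f_l\to f$ in $L^q(Q_j)$; estimate \eqref{eq.dxsC} then ensures $P^{(s)}_{Q_j,c_0}$ is continuous on $L^q(Q_j)$, so for every fixed $k$ the $N$-truncated $jn$-type sum for $f_k-f$ equals the limit as $l\to\fz$ of the analogous sum for $f_k-f_l$, which in turn is at most $\liminf_{l\to\fz}\|f_k-f_l\|_{jn_{(p,q,s)_{\az}}(\mathcal{X})}$. Taking the supremum over $N$ and then over $\{Q_j\}_{j\in\nn}$ and invoking the Cauchy hypothesis gives $\|f_k-f\|_{jn_{(p,q,s)_{\az}}(\mathcal{X})}\to 0$ as $k\to\fz$; writing $f=f_K+(f-f_K)$ for large $K$ then shows $f\in jn_{(p,q,s)_{\az}}(\mathcal{X})$.

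The main technical care is the two-regime treatment of $P^{(s)}_{Q,c_0}$: large cubes ($\ell(Q)\ge c_0$) supply a free $L^q_{\loc}$ inequality with no polynomial projection present, whereas small cubes ($\ell(Q)<c_0$) carry a genuine polynomial whose $L^q(Q_j)$-continuity comes from \eqref{eq.dxsC}. Both roles are needed and both cooperate, so I expect no further ingredient beyond careful bookkeeping of this case distinction.
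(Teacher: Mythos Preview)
Your proof is correct and uses a genuinely different completeness scheme than the paper. The paper proceeds via the absolutely-summable-series criterion: it takes $\{f_k\}$ with $\sum_k\|f_k\|_{jn}<\infty$, constructs $f:=\sum_k f_k$ almost everywhere by controlling $\sum_k|f_k|$ in $L^q$ on large cubes, and then argues that $P^{(s)}_{Q_j}\bigl(\sum_{k\ge N} f_k\bigr)=\sum_{k\ge N}P^{(s)}_{Q_j}(f_k)$ via a dominated-convergence step, which combined with Minkowski yields $\|\sum_{k\ge N} f_k\|_{jn}\le\sum_{k\ge N}\|f_k\|_{jn}\to0$. Your route instead takes a Cauchy sequence directly, extracts an $L^q_{\loc}$ limit from the large-cube inequality, and upgrades via a Fatou-type lower-semicontinuity argument on finite subfamilies. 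The advantage of your scheme is that it only needs continuity of $P^{(s)}_{Q_j,c_0}$ on $L^q(Q_j)$ (a one-shot use of \eqref{eq.dxsC}) rather than commuting the projection through an infinite sum; the paper's scheme, on the other hand, is slightly shorter once that commutation is established and makes the triangle inequality for the $jn$-norm do all the remaining work. Both are standard and equally rigorous.
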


\begin{proof} Let $p$, $q$, $s$ and $\az$ be as in this proposition and the constant $c_0\in(0,\ell(\cX))$.
It is easy to show that ${jn_{(p,q,s)_{\az}}(\cX)}$
is a normed space. Then we only need to prove that $jn_{(p,q,s)_{\az}}(\mathcal{X})$ is complete.
Let $\{f_k\}^{\fz}_{k=1}\subset jn_{(p,q,s)_{\az}}(\cX)$ and $\sum^{\fz}_{k=1}\|f_k\|_{jn_{(p,q,s)_{\az}}(\cX)}<\fz$.
Now, we claim that there exists a measurable function $f$ on $\mathcal{X}$ such that
\begin{align}
\label{eq.jncom2}
f=\sum^{\fz}_{k=1}f_k \quad \mbox{almost everywhere. }
\end{align}
Indeed, if $\mathcal{X}$ is a cube $Q_0\subsetneqq\rn$, by the Minkowski inequality, we have
\begin{align*}
  \lf[\int_{Q_0}\lf(\sum^{\fz}_{k=1}\lf|f_{k}\r|\r)^q\r]^{\frac{1}{q}}
  &\le\sum^{\fz}_{k=1}\lf\|f_{k}\r\|_{L^q(Q_0)}
        =\sum^{\fz}_{k=1}\lf|Q_0\r|^{\az+\frac{1}{q}-\frac{1}{p}}\lf[\lf|Q_0\r|^{1-p\az}
        \lf(\fint_{Q_0}\lf|f_{k}\r|^q\r)^{\frac{p}{q}}\r]^{\frac{1}{p}}\\
  &\le\lf|Q_0\r|^{\az+\frac{1}{q}-\frac{1}{p}}\sum^{\fz}_{k=1}\lf\|f_{k}\r\|_{jn_{(p,q,s)_{\az}}(Q_0)}<\fz.
\end{align*}
Thus, $(\sum^{\fz}_{k=1}|f_{k}|)^q$ is integrable on $Q_0$
and hence $\sum^{\fz}_{k=1}|f_k|$ is finite almost everywhere on $Q_0$.
Letting $f:=\sum^{\fz}_{k=1}f_k$,
then \eqref{eq.jncom2} holds true when $\mathcal{X}=Q_0$.
If $\mathcal{X}=\rn$,
choose interior pairwise disjoint cubes $\{R_i\}_{i\in\nn}$
such that $\rn=\bigcup_{i\in\nn}R_i$ and $\ell(R_i)\in[c_0,\fz)$.
For any $i\in\nn$, since \eqref{eq.jncom2} holds true when $\mathcal{X}=R_i$,
we deduce that there exists a function $g_i$ on $R_i$ such that
$g_i=\sum^{\fz}_{k=1}f_k\textbf{1}_{R_i}$ almost everywhere. Let $f:=\sum_{i\in\nn}g_i$.
Then $f=\sum^{\fz}_{k=1}f_k$ almost everywhere and hence \eqref{eq.jncom2} also
holds true when $\cX=\rn$. This proves the above claim.

Now, we show that $f\in jn_{(p,q,s)_{\az}}(\cX)$ and $\|f-\sum^N_{k=1}f_k\|_{jn_{(p,q,s)_{\az}}(\cX)}\to 0$ as $N\to\fz$.
To this end, let $\{Q_j\}_{j\in\nn}$ be interior pairwise disjoint cubes in $\mathcal{X}$. For any $Q_j$,
there exists a cube $\widetilde{Q}_j$
such that $Q_j\subset \widetilde{Q}_j\subset\mathcal{X}$ and $\ell(\widetilde{Q}_j)\in[c_0,\ell(\cX))$.
For any $N\in\nn$, by \eqref{eq.dxsC}, the H\"older inequality and Definition \ref{def.jnpqs}, we have
\begin{align*}
  \int_{Q_j}\sum^{\fz}_{k=N}\lf|P^{(s)}_{Q_j}\lf(f_k\r)\r|
  &\lesssim\int_{{Q}_j}\lf(\sum^{\fz}_{k=N}\fint_{Q_j}\lf|f_k\r|\r)
  \sim \sum^{\fz}_{k=N}\int_{{Q}_j}\lf|f_k\r|
       \lesssim\sum^{\fz}_{k=N}\int_{\widetilde{Q}_j}\lf|f_k\r|\\
  &\ls \sum^{\fz}_{k=N}\lf|\widetilde{Q}_j\r|\lf(\fint_{\widetilde{Q}_j}\lf|f_k\r|^q\r)^{\frac{1}{q}}
  \ls \lf|\widetilde{Q}_j\r|^{\az+1-\frac{1}{p}}
       \sum^{\fz}_{k=N}\lf\|f_k\r\|_{jn_{(p,q,s)_{\az}}(\mathcal{X})}<\fz,
\end{align*}
which implies that $\sum^{\fz}_{k=1}[|P^{(s)}_{Q_j}\lf(f_k\r)|+|f_k|]$ is integrable on $Q_j$.
From this and the dominated convergence theorem,
we deduce that, for any $N\in\nn$, $\beta\in\zz_{+}^n$ and $|\beta|\le s$,
\begin{align*}
    \int_{Q_j}\lf[\sum^{\fz}_{k=N}f_k(x)-\sum^{\fz}_{k=N}P^{(s)}_{Q_j}\lf(f_k\r)(x)\r]x^{\beta}dx
        =\sum^{\fz}_{k=N}\int_{Q_j}\lf[f_k(x)-P^{(s)}_{Q_j}(f_k)(x)\r]x^{\beta}dx=0.
\end{align*}
Thus, $P^{(s)}_{Q_j}(\sum^{\fz}_{k=N}f_k)=\sum^{\fz}_{k=N}P^{(s)}_{Q_j}(f_k)$.
Combining this, the Minkowski inequality and Definition \ref{def.jnpqs}, we find that
\begin{align*}
  &\lf\{\sum_{j\in\nn}\lf|Q_j\r|^{1-p\az}\lf[\fint_{Q_j}\lf|\sum^{\fz}_{k=N}f_k-
       P^{(s)}_{Q_j,c_0}\lf(\sum^{\fz}_{k=N}f_k\r)\r|^q\r]^{\frac{p}{q}}\r\}^{\frac{1}{p}}\\
  &\quad\le\lf(\sum_{j\in\nn}\lf|Q_j\r|^{1-p\az}\lf\{\fint_{Q_j}\lf[\sum^{\fz}_{k=N}\lf|f_k-
       P^{(s)}_{Q_j,c_0}\lf(f_k\r)\r|\r]^q\r\}^{\frac{p}{q}}\r)^{\frac{1}{p}}\\
  &\quad\le\sum^{\fz}_{k=N}\lf\{\sum_{j\in\nn}\lf|Q_j\r|^{1-p\az}\lf[\fint_{Q_j}\lf|f_k-
       P^{(s)}_{Q_j,c_0}\lf(f_k\r)\r|^q\r]^{\frac{p}{q}}\r\}^{\frac{1}{p}}
       \le\sum^{\fz}_{k=N}\lf\|f_k\r\|_{jn_{(p,q,s)_{\az}}(\mathcal{X})}.
\end{align*}
Therefore, $\|\sum^{\fz}_{k=N}f_k\|_{jn_{(p,q,s)_{\az}}(\mathcal{X})}
\le\sum^{\fz}_{k=N}\|f_k\|_{jn_{(p,q,s)_{\az}}(\mathcal{X})}$.
From this, \eqref{eq.jncom2} and $\sum^{\fz}_{k=1}\|f_k\|_{jn_{(p,q,s)_{\az}}(\cX)}<\fz$, we deduce that
$f\in{jn_{(p,q,s)_{\az}}(\mathcal{X})} $ and
$$\lf\|f-\sum^N_{k=1}f_k\r\|_{jn_{(p,q,s)_{\az}}(\mathcal{X})}\to 0
\quad \mathrm{as}\,\, N\to\fz.$$
This finishes the proof of Proposition \ref{prop.jncom}.
\end{proof}

Let $p\in(1,\fz)$, $q\in[1,\fz)$, $s\in\zz_{+}$ and $\az\in[0,\fz)$.
Next, we consider the relations between the localized John--Nirenberg--Campanato space $jn_{(p,q,s)_{\az}}(\cX)$ and
the John--Nirenberg--Campanato space $JN_{(p,q,s)_{\az}}(\cX)$.
To do this, we first need to recall the notion of $JN_{(p,q,s)_{\az}}(\cX)$
from \cite[Definition 1.2]{TYY} as follows.

\begin{definition}\label{bjnp}
Let $p\in(1,\fz)$, $q\in[1,\fz)$, $s\in\zz_{+}$ and $\az\in[0,\fz)$.
The \emph{John--Nirenberg--Campanato space}
$JN_{(p,q,s)_{\az}}(\mathcal{X})$ is defined to be the set of all functions $f\in L^q_{\loc}(\mathcal{X})$ such that
$$\|f\|_{JN_{(p,q,s)_{\az}}(\mathcal{X})}:=\sup\lf[\sum_{j\in\nn}\lf|Q_j\r|\lf\{\lf|Q_j\r|^{-\az}
\lf[\fint_{Q_j}\lf|f-P^{(s)}_{Q_j}(f)\r|^q\r]^{\frac{1}{q}}\r\}^p\r]^{\frac{1}{p}}<\fz,$$
where the supremum is taken over all collections of interior pairwise disjoint cubes
$\{Q_j\}_{j\in\nn}$ in $\mathcal{X}$.
\end{definition}

To achieve our target, we also need the following technical lemma.

\begin{lemma}\label{l2}
Let $p\in(1,\fz)$, $q\in[1,\fz)$, $s\in\zz_{+}$, $\az\in[0,\fz)$ and $Q_0\subsetneqq\rn$ be a cube.
Then there exists a positive constant $C$ such that, for any $a\in \mathcal{P}_s(Q_0)$,
$$
\frac{1}{C}\|a\|_{L^q(Q_0)}\le \|a\|_{jn_{(p,q,s)_{\az}}(Q_0)}\le C\|a\|_{L^q(Q_0)}.
$$
\end{lemma}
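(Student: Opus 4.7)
The plan is to exploit the key algebraic fact that the projection $P^{(s)}_Q$ fixes every polynomial of degree at most $s$. Concretely, if $a\in\mathcal{P}_s(Q_0)$ and $Q\subset Q_0$ is any subcube, then $P^{(s)}_Q(a)=a$, so by the definition of $P^{(s)}_{Q,c_0}$ we have
\[
a-P^{(s)}_{Q,c_0}(a)=\begin{cases} 0 & \text{if } \ell(Q)<c_0,\\ a & \text{if } \ell(Q)\ge c_0.\end{cases}
\]
Consequently, for any collection $\{Q_j\}_{j\in\nn}$ of interior pairwise disjoint subcubes of $Q_0$, only the subfamily $J:=\{j: \ell(Q_j)\ge c_0\}$ contributes to the sum defining $\|a\|_{jn_{(p,q,s)_\alpha}(Q_0)}$, and on that subfamily the inner integral is simply $\int_{Q_j}|a|^q$. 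This is what makes both inequalities elementary.

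For the upper bound, I note that any cube in $J$ satisfies $c_0^n\le |Q_j|\le |Q_0|$, so the factor $|Q_j|^{1-p\alpha-p/q}$ is bounded by a constant $C_1=C_1(p,q,\alpha,n,c_0,|Q_0|)$ (taking the maximum of the two endpoint values to handle both signs of the exponent). Moreover, the disjointness inside $Q_0$ and the lower bound on $|Q_j|$ force $\#J\le |Q_0|/c_0^n$. Since $Q_j\subset Q_0$ gives $\int_{Q_j}|a|^q\le \|a\|_{L^q(Q_0)}^q$, I then obtain
\[
\sum_{j\in J}|Q_j|^{1-p\alpha-p/q}\Bigl(\int_{Q_j}|a|^q\Bigr)^{p/q}\le C_1\,\frac{|Q_0|}{c_0^n}\,\|a\|_{L^q(Q_0)}^p,
\]
and taking the supremum over $\{Q_j\}$ and a $p$-th root yields $\|a\|_{jn_{(p,q,s)_\alpha}(Q_0)}\lesssim \|a\|_{L^q(Q_0)}$.

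For the lower bound, I test the supremum against a single explicit collection. Since $c_0<\ell(Q_0)$, I can pick an integer $k\ge 0$ so that the dyadic subdivision of $Q_0$ into $N=2^{nk}$ equal subcubes $\{Q_j\}_{j=1}^N$ still satisfies $\ell(Q_j)\ge c_0$; then every $Q_j$ lies in $J$ and the projections vanish. Using the power-mean inequality $\sum_{j=1}^N x_j^{p/q}\ge N^{1-p/q}\bigl(\sum_{j=1}^N x_j\bigr)^{p/q}$ (valid for $p\ge q$ and for $p<q$ in the reverse sense, but in either case producing a nontrivial lower bound with a constant depending only on $N$) with $x_j=\int_{Q_j}|a|^q$, together with $|Q_j|=|Q_0|/N$, I obtain a constant multiple of $\|a\|_{L^q(Q_0)}^p$ on the right-hand side, and taking the $p$-th root finishes the lower bound.

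There is no real obstacle here: the whole argument rests on the polynomial reproduction $P^{(s)}_Q(a)=a$ and on the compactness statement that only finitely many disjoint subcubes of $Q_0$ can have side length $\ge c_0$. The one place requiring mild care is the sign of $1-p\alpha-p/q$ in the upper bound, handled by the two-endpoint maximum; and the mild subtlety in the lower bound that if the definition of $\|\cdot\|_{jn_{(p,q,s)_\alpha}}$ is interpreted as forcing a countable family, the dyadic collection $\{Q_j\}_{j=1}^N$ (padded by empty cubes) is the correct testing object rather than the single cube $\{Q_0\}$.
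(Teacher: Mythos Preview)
Your proof is correct and follows essentially the same approach as the paper: both hinge on the observation that $P^{(s)}_Q(a)=a$ for $a\in\mathcal{P}_s(Q_0)$, so only cubes with $\ell(Q_j)\ge c_0$ contribute, and then exploit the two-sided bound $c_0^n\le|Q_j|\le|Q_0|$ together with $\sum_{j\in J}|Q_j|\le|Q_0|$ (or equivalently your $\#J\le|Q_0|/c_0^n$) for the upper estimate. For the lower bound the paper simply tests on the single cube $Q_0$ itself (since $\ell(Q_0)>c_0$, so $P^{(s)}_{Q_0,c_0}(a)=0$), which is your $k=0$ case and avoids the power-mean digression entirely.
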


\begin{proof}
Let $p$, $q$, $s$ and $\az$ be as in this lemma and $a\in \mathcal{P}_s(Q_0)$. From Definition \ref{def.jnpqs},
it follows that $\|a\|_{L^q(Q_0)}\le |Q_0|^{\az+\frac{1}{q}-\frac{1}{p}}\|a\|_{jn_{(p,q,s)_{\az}}(Q_0)}$.
We then only need to show $\|a\|_{jn_{(p,q,s)_{\az}}(Q_0)}\ls \|a\|_{L^q(Q_0)}$.
Let $\{Q_j\}_{j\in\nn}$ be any interior pairwise disjoint cubes in $Q_0$ and
$J:=\{j\in\nn:\,\, \ell(Q_j)\geq c_0\}$, here and hereafter, $c_0\in(0,\ell(Q_0))$.
Observe that, for any $j\in\nn$, $P^{(s)}_{Q_j}(a)=a$.
By this and the definitions of $P^{(s)}_{Q_j,c_0}(a)$ and $J$, we know that
\begin{align*}
     \lf\{\sum_{j\in\nn}\lf|Q_j\r|^{1-p\az}
          \lf[\fint_{Q_j}\lf|a-P^{(s)}_{Q_j,c_0}(a)\r|^q\r]^{\frac{p}{q}}\r\}^{\frac{1}{p}}
     & = \lf[\sum_{j\in J}\lf|Q_j\r|^{1-p\az}
          \lf(\fint_{Q_j}\lf|a\r|^q\r)^{\frac{p}{q}}\r]^{\frac{1}{p}}\\
     &\le c_0^{-n(\az+\frac{1}{q})}\lf[\sum_{j\in J}\lf|Q_j\r|
          \lf\|a\r\|^p_{L^q(Q_0)}\r]^{\frac{1}{p}}\\
     & \le \lf|Q_0\r|^{\frac{1}{p}}c_0^{-n(\az+\frac{1}{q})} \lf\|a\r\|_{L^q(Q_0)},
\end{align*}
which, combined with Definition \ref{def.jnpqs}, implies that $\|a\|_{jn_{(p,q,s)_{\az}}(Q_0)}\ls \|a\|_{L^q(Q_0)}$.
This finishes the proof of Lemma \ref{l2}.
\end{proof}

From Lemma \ref{l2}, we deduce that
$\mathcal{P}_s(Q_0)$ is a subspace of $jn_{(p,q,s)_{\az}}(Q_0)$.
In what follows, the \emph{space $jn_{(p,q,s)_{\az}}(Q_0)/\mathcal{P}_s(Q_0)$} is defined by setting
$$jn_{(p,q,s)_{\az}}(Q_0)/\mathcal{P}_s(Q_0):=\lf\{f\in jn_{(p,q,s)_{\az}}(Q_0):\,\, \|f\|_{jn_{(p,q,s)_{\az}}(Q_0)/\mathcal{P}_s(Q_0)}<\fz\r\},$$
where $\|f\|_{jn_{(p,q,s)_{\az}}(Q_0)/\mathcal{P}_s(Q_0)}:=\inf_{a\in \mathcal{P}_s(Q_0)}\|f+a\|_{jn_{(p,q,s)_\az}(Q_0)}$.

\begin{proposition}
\label{rem.jnandJN}
Let $p\in(1,\fz)$, $q\in[1,\fz)$, $s\in\zz_{+}$ and $\az\in[0,\fz)$. Then
\begin{itemize}
\item[\rm{(i)}]
$jn_{(p,q,s)_{\az}}(\mathcal{X})\subset JN_{(p,q,s)_{\az}}(\mathcal{X})$;
\item[\rm{(ii)}] if $Q_0\subsetneqq\rn$ is a cube, then
$JN_{(p,q,s)_{\az}}(Q_0)=jn_{(p,q,s)_{\az}}(Q_0)/\mathcal{P}_s(Q_0)$ with equivalent norms;
\item[\rm{(iii)}]
$L^p(\rr)\subsetneqq jn_p(\rr)\subsetneqq JN_p(\rr)$.
\end{itemize}
\end{proposition}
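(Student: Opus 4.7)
The plan for parts (i) and (ii) rests on a single dichotomy: for any cube $Q$, the localized and non-localized polynomials agree ($P^{(s)}_{Q,c_0}(f)=P^{(s)}_{Q}(f)$) when $\ell(Q)<c_0$, and the localized one vanishes when $\ell(Q)\ge c_0$. For part (i), I would fix interior pairwise disjoint cubes $\{Q_j\}_{j\in\nn}\subset\cX$ and split the index set into $J_1:=\{j:\ell(Q_j)<c_0\}$ and $J_2:=\{j:\ell(Q_j)\ge c_0\}$. On $J_1$ the $jn$- and $JN$-integrands coincide. On $J_2$, since $P^{(s)}_{Q_j,c_0}(f)=0$, the Minkowski inequality together with \eqref{eq.dxsC} and the H\"older inequality yields
\[
\lf[\fint_{Q_j}\lf|f-P^{(s)}_{Q_j}(f)\r|^q\r]^{\frac{1}{q}}
\lesssim \lf(\fint_{Q_j}|f|^q\r)^{\frac{1}{q}}
=\lf[\fint_{Q_j}\lf|f-P^{(s)}_{Q_j,c_0}(f)\r|^q\r]^{\frac{1}{q}}.
\]
Summing over $j$ and taking the supremum over admissible families yields $\|f\|_{JN_{(p,q,s)_\az}(\cX)}\lesssim \|f\|_{jn_{(p,q,s)_\az}(\cX)}$, which is (i).

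For part (ii), the starting observation is that $P^{(s)}_Q(a)=a$ for any $a\in\mathcal{P}_s(Q_0)$ and any subcube $Q\subset Q_0$, so $\|\cdot\|_{JN_{(p,q,s)_\az}(Q_0)}$ vanishes on $\mathcal{P}_s(Q_0)$. The direction from $jn/\mathcal{P}_s$ into $JN$ is then immediate from (i) after passing to the infimum over $a\in\mathcal{P}_s(Q_0)$. For the reverse, given $f\in JN_{(p,q,s)_\az}(Q_0)$, I would take the canonical representative $g:=f-P^{(s)}_{Q_0}(f)$, so that $-P^{(s)}_{Q_0}(f)\in\mathcal{P}_s(Q_0)$. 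For any $Q_j\subset Q_0$ with $\ell(Q_j)<c_0$, linearity of $P^{(s)}_{Q_j}$ together with $P^{(s)}_{Q_j}(P^{(s)}_{Q_0}(f))=P^{(s)}_{Q_0}(f)$ forces $g-P^{(s)}_{Q_j,c_0}(g)=f-P^{(s)}_{Q_j}(f)$, so the $jn$-integrand matches the $JN$-integrand. For $\ell(Q_j)\ge c_0$, there are at most $(\ell(Q_0)/c_0)^n$ such cubes in any disjoint family; testing $\|f\|_{JN}$ on the single-cube collection $\{Q_0\}$ yields $(\fint_{Q_0}|g|^q)^{1/q}\le|Q_0|^{\az-1/p}\|f\|_{JN}$, and combining with $\int_{Q_j}|g|^q\le\int_{Q_0}|g|^q$ and $|Q_j|\ge c_0^n$ controls each remaining term by a constant multiple of $\|f\|_{JN}$.

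For part (iii), the embedding $jn_p(\rr)\subset JN_p(\rr)$ is a special case of (i). For $L^p(\rr)\subset jn_p(\rr)$ I would use the same small/large split on any disjoint family $\{I_j\}$: when $\ell(I_j)<c_0$, the triangle inequality and Jensen's inequality give $|I_j|(\fint_{I_j}|f-f_{I_j}|)^p\le 2^p\int_{I_j}|f|^p$, while for $\ell(I_j)\ge c_0$ Jensen's inequality alone yields $|I_j|(\fint_{I_j}|f|)^p\le\int_{I_j}|f|^p$. Disjointness then telescopes the sum into $\|f\|_{L^p(\rr)}^p$. The strict inclusion $jn_p(\rr)\subsetneqq JN_p(\rr)$ is witnessed by $f\equiv 1$: all $JN_p$-integrands vanish, while the disjoint family $I_j:=(jc_0,(j+1)c_0)$, $j\in\nn$, gives $\|f\|_{jn_p(\rr)}^p\ge\sum_{j\in\nn}c_0=\fz$. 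For $L^p(\rr)\subsetneqq jn_p(\rr)$ I would produce $f\notin L^p(\rr)$ as a sum $\sum_k a_k\mathbf{1}_{B_k}$ of well-separated and rapidly shrinking bumps with carefully tuned heights $a_k$, so that $\sum_k a_k^p|B_k|=\fz$ while on any large interval the average $\fint_I|f|$ decays fast enough to give a summable $jn_p$-contribution.

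The hardest step, and the main obstacle, is the strictness $L^p(\rr)\subsetneqq jn_p(\rr)$: the natural first-guess counterexamples (logarithms, power singularities, slowly decaying tails) either sit in $L^p(\rr)$ or else fail to be in $jn_p(\rr)$ because the $\ell(I)\ge c_0$ contributions accumulate uncontrollably. Designing the bump heights $\{a_k\}$ and supports $\{B_k\}$ so as to simultaneously blow up the $L^p$-mass while keeping both the large-scale averages (on $\ell(I)\ge c_0$) and the fine-scale oscillation terms (on $\ell(I)<c_0$ that straddle a bump) summable in the $jn_p$-sense is the delicate point; once such an $f$ is produced the verification reduces to a direct calculation.
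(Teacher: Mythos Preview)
Your arguments for (i) and (ii) follow the paper's proof essentially line for line: the same small/large cube split, the same use of \eqref{eq.dxsC} and H\"older to bound the $JN$-integrand by the $jn$-integrand, the same choice $g:=f-P^{(s)}_{Q_0}(f)$ together with the finiteness $\#J\le |Q_0|/c_0^n$ for the reverse direction in (ii). Likewise, your treatment of $L^p(\rr)\subset jn_p(\rr)$ and of the strictness $jn_p(\rr)\subsetneqq JN_p(\rr)$ via a nonzero constant matches the paper.

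The one genuine divergence is the strictness $L^p(\rr)\subsetneqq jn_p(\rr)$. You propose to build a bump example from scratch and correctly flag this as the hardest step. The paper bypasses the construction entirely by proving instead that $JN_p(\rr)\cap L^1(\rr)\subset jn_p(\rr)$: on intervals $I_j$ with $\ell(I_j)\ge c_0$ one has
\[
\sum_{j\in J}\lf|I_j\r|\lf(\fint_{I_j}|g|\r)^p
\ls\sum_{j\in J}\lf(\int_{I_j}|g|\r)^p
\le\lf(\sum_{j\in J}\int_{I_j}|g|\r)^p
\le\|g\|_{L^1(\rr)}^p,
\]
while on the small intervals the $jn_p$- and $JN_p$-integrands coincide; hence $\|g\|_{jn_p(\rr)}\ls\|g\|_{JN_p(\rr)}+\|g\|_{L^1(\rr)}$. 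It then simply cites Dafni et al.\ \cite{DHKY}, who already constructed a function $g\in JN_p(\rr)\setminus L^p(\rr)$ which also lies in $L^1(\rr)$. Your bump programme is morally a re-derivation of that \cite{DHKY} example; it is feasible but delicate, whereas the paper's route isolates exactly the missing ingredient you were reaching for in your final paragraph --- $L^1$-control suffices to tame the large-scale terms --- and outsources the hard construction to the literature.
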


\begin{proof}
We first prove (i). Let $f\in jn_{(p,q,s)_{\az}}(\mathcal{X})$
and $\{Q_j\}_{j\in\nn}$ be interior pairwise disjoint cubes in $\cX$.
From \eqref{eq.dxsC}, the definition of $P^{(s)}_{Q_j,c_0}(f)$ and the H\"{o}lder inequality, it follows that
$$\lf[\fint_{Q_j}\lf|f-P^{(s)}_{Q_j}(f)\r|^q\r]^{\frac{1}{q}}\lesssim
\lf[\fint_{Q_j}\lf|f-P^{(s)}_{Q_j,c_0}(f)\r|^q\r]^{\frac{1}{q}}.
$$
By this and the arbitrariness of $\{Q_j\}_{j\in\nn}$, we have
$\|f\|_{JN_{(p,q,s)_{\az}}(\mathcal{X})}\lesssim\|f\|_{jn_{(p,q,s)_{\az}}(\mathcal{X})}.$
This proves (i).

For (ii), let $f\in jn_{(p,q,s)_{\az}}(Q_0)/\mathcal{P}_s(Q_0)$.
For any $a\in \mathcal{P}_s(Q_0)$,
by Definition \ref{bjnp} and (i),
we find that
$$
\|f\|_{JN_{(p,q,s)_{\az}}(Q_0)}
=\|f+a\|_{JN_{(p,q,s)_{\az}}(Q_0)}
\lesssim\|f+a\|_{jn_{(p,q,s)_{\az}}(Q_0)},$$
which implies that $f\in JN_{(p,q,s)_{\az}}(Q_0)$ and
$\|f\|_{JN_{(p,q,s)_{\az}}(Q_0)}\lesssim\|f\|_{jn_{(p,q,s)_{\az}}(Q_0)/\mathcal{P}_s(Q_0)}.$
Thus, $$jn_{(p,q,s)_{\az}}(Q_0)/\mathcal{P}_s(Q_0)\subset JN_{(p,q,s)_{\az}}(Q_0).$$
Next, we prove $JN_{(p,q,s)_{\az}}(Q_0)\subset jn_{(p,q,s)_{\az}}(Q_0)/\mathcal{P}_s(Q_0)$.
Let $f\in JN_{(p,q,s)_{\az}}(Q_0)$, $g:=f-P^{(s)}_{Q_0}(f)$ and
$\{Q_j\}_{j\in\nn}$ be interior mutually disjoint cubes in $Q_0$.
Let $J:=\{j\in \nn:\,\,\ell(Q_j)\ge c_0\}$. Then $\# J\le \frac{|Q_0|}{c^n_0}$.
From this, the Minkowski inequality, it follows that
\begin{align*}
  &\lf\{\sum_{j\in\nn}\lf|Q_j\r|^{1-p\az}\lf[\fint_{Q_j}\lf|g-P^{(s)}_{Q_j,c_0}(g)\r|^q\r]^{\frac{p}{q}}\r\}^{\frac{1}{p}}\\
  &\quad=\lf\{\sum_{j\in \nn\setminus J}\lf|Q_j\r|^{1-p\az}\lf[\fint_{Q_j}\lf|g-P^{(s)}_{Q_j}(g)\r|^q\r]^{\frac{p}{q}}\r\}^{\frac{1}{p}}
           +\lf[\sum_{j\in J}\lf|Q_j\r|^{1-p\az}\lf(\fint_{Q_j}|g|^q\r)^{\frac{p}{q}}\r]^{\frac{1}{p}}\\
  &\quad\lesssim\| g\|_{JN_{(p,q,s)_{\az}}(Q_0)}+\lf\{\sum_{j\in J}\lf[\int_{Q_0}\lf|f-P^{(s)}_{Q_0}(f)\r|^q\r]^{\frac{p}{q}}\r\}^{\frac{1}{p}}
  \lesssim\| f\|_{JN_{(p,q,s)_{\az}}(Q_0)}.
\end{align*}
Combining this and the arbitrariness of $\{Q_j\}_{j\in\nn}$, we conclude that
$$\|f\|_{jn_{(p,q,s)_{\az}}(Q_0)/\mathcal{P}_s(Q_0)}\le\|g\|_{jn_{(p,q,s)_{\az}}(Q_0)}\ls
\| f\|_{JN_{(p,q,s)_{\az}}(Q_0)}.$$
Therefore, $f\in jn_{(p,q,s)_{\az}}(Q_0)/\mathcal{P}_s(Q_0)$ and
hence $JN_{(p,q,s)_{\az}}(Q_0)\subset jn_{(p,q,s)_{\az}}(Q_0)/\mathcal{P}_s(Q_0)$.
This proves (ii).

Finally, we prove (iii). Let $a\in\rr$ be any non-zero constant. Clearly, $\|a\|_{JN_p(\rr)}=0$.
For any $N\in[c_0,\fz)$, let $I_N:=[-N,N]$. From the definition of $jn_p(\rr)$, we deduce that
$$\|a\|_{jn_p(\rr)}\ge\lf[\lf|I_N\r|\lf(\fint_{I_N}|a|\r)^p\r]^{\frac{1}{p}}=
(2N)^{\frac{1}{p}}|a| \to\fz \qquad\mbox{as}\ N\to\fz.$$
Thus, $a\in JN_p(\rr)\setminus jn_p(\rr)$. Combining this and \rm{(i)}, we obtain $jn_p(\rr)\subsetneqq JN_p(\rr)$.
Now, we show $L^p(\rr)\subsetneqq jn_p(\rr)$.
Let $f\in L^p(\rr)$. By the H\"{o}lder inequality, we have
\begin{align}\label{a2}
  \|f\|_{jn_p(\rr)}
  &=\sup\sum_{j\in\nn}\lf\{\lf|I_j\r|\lf[\fint_{I_j}\lf|f-P^{(0)}_{I_j,c_0}(f)\r|\r]^p\r\}^{\frac{1}{p}}
         \le \sup\sum_{j\in\nn}\lf[\lf|I_j\r|\lf(\fint_{I_j}\lf|f\r|+\lf|f_{I_j}\r|\r)^p\r]^{\frac{1}{p}}\\
  &\le2\sup \lf[\sum_{j\in\nn}\lf|I_j\r|\lf(\fint_{I_j}|f|\r)^p\r]^{\frac{1}{p}}
          \le 2\sup\lf(\sum_{j\in\nn}\lf|I_j\r|\fint_{I_j}|f|^p\r)^{\frac{1}{p}}
\le2\|f\|_{L^p(\rr)},\noz
\end{align}
where the supremum is taken over all collections of interior pairwise disjoint intervals $\{I_j\}_{j\in\nn}$ in $\rr$.
Thus, $L^p(\rr)\subset jn_p(\rr)$.
Then we only need to find a function which belongs to $ jn_p(\rr)\setminus L^p(\rr)$.
Recall that Dafni et al. \cite[Proposition 3.2]{DHKY} constructed a function $g\in JN_p(\rr)\setminus L^p(\rr)$ and
they also showed that $g\in L^1(\rr)$ in \cite[Lemma 3.4]{DHKY}.
Let $\{I_j\}_{j\in\nn}$ be interior mutually disjoint intervals in $\rr$
and $J:=\{j\in \nn:\,\,\ell(I_j)\ge c_0\}$. Then we have
\begin{align*}
\lf\{\sum_{j\in\nn}\lf|I_j\r|\lf[\fint_{I_j}\lf|g-
        P^{(0)}_{I_j,c_0}\lf(g\r)\r|\r]^p\r\}^{\frac{1}{p}}
       & \le\lf[\sum_{j\in \nn\setminus J}\lf|I_j\r|
         \lf(\fint_{I_j}\lf|g-g_{I_j}\r|\r)^p
         +\sum_{j\in J}\lf|I_j\r|\lf(\fint_{I_j}\lf|g\r|\r)^p\r]^{\frac{1}{p}}\\
  &\ls\lf[\lf\| g\r\|^p_{JN_p(\rr)}+\sum_{j\in
         J}\lf(\int_{I_j}\lf|g\r|\r)^p\r]^{\frac{1}{p}}
         \ls\lf\| g\r\|_{JN_p(\rr)}+\lf\| g\r\|_{L^1(\rr)},
\end{align*}
which further implies that $ \| g\|_{jn_p(\rr)}\ls\| g\|_{JN_p(\rr)}+\| g\|_{L^1(\rr)}$.
Thus, we have $g\in jn_p(\rr)\setminus L^p(\rr)$. This finishes the proof of (iii) and hence of Proposition \ref{rem.jnandJN}.
\end{proof}

In what follows, for any normed spaces $\mathbb{X}_1$ and $\mathbb{X}_2$,
the \emph{space $\mathbb{X}_1\cap\mathbb{X}_2$} denotes the intersection $\mathbb{X}_1\cap\mathbb{X}_2$
 equipped with the norm
$$
\|\cdot\|_{\mathbb{X}_1\cap\mathbb{X}_2}:
=\max\lf\{\|\cdot\|_{\mathbb{X}_1},\|\cdot\|_{\mathbb{X}_2}\r\}.$$

\begin{proposition}\label{p.a}
Let $p\in(1,\fz)$, $q\in[1,p]$, $s\in\zz_{+}$ and $\az\in(0,\fz)$. Then
$jn_{(p,q,s)_{\az}}(\cX)=JN_{(p,q,s)_{\az}}(\cX)\cap L^p(\cX)$.
\end{proposition}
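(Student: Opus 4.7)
The plan is to establish the two inclusions separately, handling both via a size-based partitioning of the cube collections. Throughout we fix a constant $c_0 \in (0, \ell(\cX))$ as in Definition \ref{def.jnpqs}.

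For the inclusion $JN_{(p,q,s)_{\az}}(\cX) \cap L^p(\cX) \subset jn_{(p,q,s)_{\az}}(\cX)$, I would take an arbitrary disjoint family $\{Q_j\}_{j\in\nn}$ in $\cX$ and split the index set as $J_S := \{j\in\nn : \ell(Q_j) < c_0\}$ and $J_B := \{j\in\nn : \ell(Q_j) \ge c_0\}$. On $J_S$ one has $P^{(s)}_{Q_j,c_0}(f) = P^{(s)}_{Q_j}(f)$, so that part is immediately controlled by $\|f\|^p_{JN_{(p,q,s)_{\az}}(\cX)}$. On $J_B$ one has $P^{(s)}_{Q_j,c_0}(f) = 0$; Jensen's inequality (valid since $p/q \ge 1$) converts $(\fint_{Q_j}|f|^q)^{p/q}$ into $|Q_j|^{-1}\|f\|^p_{L^p(Q_j)}$, while the hypothesis $\az > 0$ together with $|Q_j| \ge c_0^n$ yields $|Q_j|^{-p\az} \le c_0^{-np\az}$. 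Summing by disjointness bounds the $J_B$ contribution by $c_0^{-np\az}\|f\|^p_{L^p(\cX)}$.

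For the reverse inclusion, Proposition \ref{rem.jnandJN}(i) already gives $jn \subset JN$, so it suffices to show $\|f\|_{L^p(\cX)} \lesssim \|f\|_{jn_{(p,q,s)_{\az}}(\cX)}$. I would partition $\cX$ into a disjoint family $\{R_i\}$ of cubes with $\ell(R_i) \in [c_0, 2c_0)$ (side exactly $c_0$ when $\cX = \rn$, and an appropriate partition with sides comparable to $c_0$ when $\cX$ is a finite cube). Because $\ell(R_i) \ge c_0$, we have $P^{(s)}_{R_i,c_0}(f) = 0$, and applying Definition \ref{def.jnpqs} to the single collection $\{R_i\}$ gives $\sum_i|R_i|^{1-p\az}(\fint_{R_i}|f|^q)^{p/q} \le \|f\|^p_{jn}$. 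I would then decompose $f = P^{(s)}_{R_i}(f) + (f - P^{(s)}_{R_i}(f))$ on each $R_i$; the polynomial piece is controlled via \eqref{eq.dxsC}, giving $\|P^{(s)}_{R_i}(f)\|^p_{L^p(R_i)} \lesssim |R_i|(\fint_{R_i}|f|^q)^{p/q}$, and since $|R_i|^{p\az} \lesssim c_0^{np\az}$, summation recovers a multiple of $\|f\|^p_{jn}$.

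The main obstacle is the oscillation sum $\sum_i\|f - P^{(s)}_{R_i}(f)\|^p_{L^p(R_i)}$, whose bound in terms of $\|f\|^p_{jn}$ requires upgrading the $L^q$-moment control provided by the definitions of $jn$ and $JN$ to an $L^p$-moment control on each $R_i$. The case $q = p$ is immediate from the $jn$ norm applied directly to $\{R_i\}$, since then $\fint_{R_i}|f - P^{(s)}_{R_i,c_0}(f)|^q = \fint_{R_i}|f|^p$ already yields the $L^p$ bound. For $q \in [1,p)$, I would invoke the John--Nirenberg lemma for $JN_{(p,q,s)_{\az}}(\cX)$ (Lemma \ref{b1}, i.e., \cite[Proposition 1.19]{TYY}) to effectively replace the exponent $q$ by $p$ in the summed bound, possibly combined with a further dyadic subdivision of each $R_i$ into subcubes of side $< c_0$ (where $P^{(s)}_{Q,c_0}$ coincides with $P^{(s)}_{Q}$) together with a telescoping argument in the spirit of \cite[Theorem 4.1]{JSW} cited by the authors. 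This is the genuinely delicate step of the proof; once it is in place, collecting the polynomial and oscillation estimates finishes the argument.
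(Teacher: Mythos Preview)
Your treatment of the inclusion $JN_{(p,q,s)_{\az}}(\cX)\cap L^p(\cX)\subset jn_{(p,q,s)_{\az}}(\cX)$ is correct and coincides with the paper's argument.

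For the reverse inclusion there is a genuine gap in your primary strategy. You propose to control the oscillation sum $\sum_i\|f-P^{(s)}_{R_i}(f)\|^p_{L^p(R_i)}$ by invoking Lemma~\ref{b1} to ``replace the exponent $q$ by $p$''. But Lemma~\ref{b1} only gives the equivalence $JN_{(p,q,s)_{\az}}=JN_{(p,1,s)_{\az}}$ for $q\in[1,p)$; it does \emph{not} reach the endpoint $q=p$. This is not a technicality: membership in $JN_p$ only implies weak-$L^p$ integrability of $f-P^{(s)}_{R_i}(f)$, not strong $L^p$, so the self-improvement stops strictly below $p$. Consequently the oscillation sum cannot be bounded by a $JN$- or $jn$-norm through Lemma~\ref{b1} alone.

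The backup you mention (dyadic subdivision and telescoping in the spirit of \cite{JSW}) is in fact the entire argument, and the paper carries it out directly without any appeal to the John--Nirenberg lemma. The mechanism you do not identify is the following: working with the dyadic grids $\{Q^{(k)}_j\}_{j}$ of side $2^{-k}$, the $jn$-norm (applied at level $k$) gives
\[
\Bigl(\sum_{j}|Q^{(k)}_j|\Bigl[\fint_{Q^{(k)}_j}\bigl|f-P^{(s)}_{Q^{(k)}_j,c_0}(f)\bigr|\Bigr]^p\Bigr)^{1/p}\le 2^{-n\az k}\|f\|_{jn_{(p,q,s)_{\az}}},
\]
because the definition carries the weight $|Q^{(k)}_j|^{-\az}\sim 2^{n\az k}$. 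Telescoping the polynomials $P^{(s)}_{Q^{(l)}}$ across scales $l=1,\ldots,k$ (using Lemma~\ref{lem1} to transfer polynomial averages from a cube to its dyadic children) shows that $(\sum_j|Q^{(k)}_j|[\fint_{Q^{(k)}_j}|f|]^p)^{1/p}$ is bounded by $\sum_{l=0}^{k}C\,2^{-n\az l}\|f\|_{jn}$, a geometric series which converges \emph{precisely because} $\az>0$. Lebesgue differentiation and Fatou then yield $\|f\|_{L^p}\lesssim\|f\|_{jn}$. This role of $\az>0$ is the heart of the proof and explains why the proposition fails at $\az=0$ (cf.\ Proposition~\ref{rem.jnandJN}(iii)); your outline does not isolate it.
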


To prove this proposition, we need the following lemma which can be found in \cite[Theorem 1.1]{JSW}.

\begin{lemma}\label{lem1}
Let $q\in[1,\fz)$, $s\in \zz_{+}$, $Q\subsetneqq\rn$ be a cube and $P\in \mathcal{P}_s(Q)$. Then
$$\lf[\fint_Q\lf|P(x)\r|^qdx\r]^{\frac{1}{q}}\le\sup_{x\in Q}\lf|P(x)\r|
\le C_{(s,n)}\lf[\fint_Q\lf|P(x)\r|^qdx\r]^{\frac{1}{q}},$$
where the positive constant $C_{(s,n)}$ depends only on $s$ and the dimension $n$.
\end{lemma}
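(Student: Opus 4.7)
The first inequality is immediate from Jensen's inequality (or simply the crude pointwise bound $|P(x)|^q\le\sup_{y\in Q}|P(y)|^q$ integrated and normalized), so I would dispose of it in one line and focus the work on the reverse estimate.

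For the nontrivial direction, the plan is to reduce to the unit cube by an affine change of variables, and then invoke the equivalence of norms on the finite-dimensional vector space $\mathcal{P}_s(\rn)$. Concretely, for a general cube $Q$ with center $x_Q$ and side length $\ell(Q)$, I would define $\Phi_Q\colon Q_*:=[-1/2,1/2]^n\to Q$ by $\Phi_Q(y):=x_Q+\ell(Q)\,y$ and set $\widetilde{P}:=P\circ\Phi_Q$. Then $\widetilde{P}\in\mathcal{P}_s(\rn)$, and the change of variables has constant Jacobian $\ell(Q)^n$, which yields the two identities
\begin{equation*}
\sup_{x\in Q}|P(x)|=\sup_{y\in Q_*}|\widetilde{P}(y)|\quad\text{and}\quad
\fint_Q|P(x)|^q\,dx=\fint_{Q_*}|\widetilde{P}(y)|^q\,dy.
\end{equation*}
Hence it suffices to prove the reverse inequality on the single cube $Q_*$ with a constant depending only on $s$ and $n$.

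On $Q_*$, both $\widetilde{P}\mapsto\sup_{Q_*}|\widetilde{P}|$ and $\widetilde{P}\mapsto(\int_{Q_*}|\widetilde{P}|^q)^{1/q}$ are honest norms on the finite-dimensional space $\mathcal{P}_s(\rn)$ (the second is a norm, not merely a seminorm, because a polynomial that vanishes on a set of positive measure is identically zero). Since all norms on a finite-dimensional vector space are equivalent, there exists a constant $C_{(s,n)}\in[1,\fz)$, depending only on $\dim\mathcal{P}_s(\rn)$ and hence only on $s$ and $n$, such that
\begin{equation*}
\sup_{y\in Q_*}|\widetilde{P}(y)|\le C_{(s,n)}\lf[\int_{Q_*}|\widetilde{P}(y)|^q\,dy\r]^{\frac{1}{q}}
=C_{(s,n)}\lf[\fint_{Q_*}|\widetilde{P}(y)|^q\,dy\r]^{\frac{1}{q}},
\end{equation*}
where I used $|Q_*|=1$ in the last equality. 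Undoing the change of variables gives the claimed bound on $Q$.

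The only delicate point is verifying that the constant produced by the abstract norm-equivalence argument truly depends on no datum beyond $s$ and $n$. The scaling step above takes care of $\ell(Q)$, and the translation built into $\Phi_Q$ takes care of the location of $Q$, so after the reduction every $Q$ yields the same extremal problem on the fixed cube $Q_*$. I do not foresee any serious obstacle; an alternative (fully quantitative) route would be to expand $\widetilde{P}(y)=\sum_{|\beta|\le s}a_\beta y^\beta$ in the monomial basis and observe that on $Q_*$ the linear functionals $\widetilde{P}\mapsto a_\beta$ are bounded with respect to the $L^q$ norm (with constant depending only on $s$ and $n$), from which one bounds $\sup_{Q_*}|\widetilde{P}|\le\sum_{|\beta|\le s}|a_\beta|\cdot\sup_{Q_*}|y^\beta|$ and concludes.
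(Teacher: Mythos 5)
The paper offers no proof of this lemma at all: it is imported verbatim from Jonsson--Sj\"ogren--Wallin \cite[Theorem 1.1]{JSW}, so there is no in-paper argument to compare with. Your self-contained proof is the standard one and is essentially right: the first inequality is indeed immediate from the pointwise bound, and the reverse inequality follows from the affine normalization $\Phi_Q(y)=x_Q+\ell(Q)y$ onto $Q_*=[-1/2,1/2]^n$ (which, as you verify, leaves both the supremum and the normalized $L^q$ average invariant) together with the equivalence of norms on the finite-dimensional space $\mathcal{P}_s(\rn)$. This gives the reader more than the paper does, at the cost of half a page.

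One imprecision is worth fixing. The equivalence-of-norms constant for the specific pair $\sup_{Q_*}|\cdot|$ and $\|\cdot\|_{L^q(Q_*)}$ depends a priori on $q$ as well as on $\dim\mathcal{P}_s(\rn)$: the abstract statement that all norms on a finite-dimensional space are equivalent does not, by itself, produce a constant uniform over the whole family of $L^q$ norms, whereas the lemma asserts that $C_{(s,n)}$ is independent of $q$. (Your alternative coefficient-functional route has the same issue.) The repair is one line: since $|Q_*|=1$ and $q\ge 1$, Jensen's inequality gives
$$\fint_{Q_*}\lf|\widetilde{P}\r|\le\lf[\fint_{Q_*}\lf|\widetilde{P}\r|^q\r]^{\frac{1}{q}},$$
so it suffices to establish the reverse inequality for $q=1$, and the resulting constant then serves simultaneously for every $q\in[1,\fz)$. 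With that remark inserted, your argument is complete.
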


\begin{proof}[Proof of Proposition \ref{p.a}]
Let $p,\,q,\,s,\,\az$ be as in this proposition and $c_0\in(0,\ell(\cX))$. We first show
$JN_{(p,q,s)_{\az}}(\cX)\cap L^p(\cX) \subset jn_{(p,q,s)_{\az}}(\cX)$. Let
$f\in JN_{(p,q,s)_{\az}}(\cX)\cap L^p(\cX)$,
$\{Q_j\}_{j\in\nn}$ be interior pairwise disjoint cubes in $\cX$
and $J:=\{j\in\nn:\,\,\ell(Q_j)\ge c_0\}$. By this, the definition of $P^{(s)}_{Q_j,c_0}(f)$ and
the H\"{o}lder inequality, we have
\begin{align*}
   &\lf\{\sum_{j\in\nn}\lf|Q_j\r|^{1-p\az}
       \lf[\fint_{Q_j}\lf|f-P^{(s)}_{Q_j,c_0}(f)\r|^q\r]^{\frac{p}{q}}\r\}^{\frac{1}{p}}\\
   &\quad\le\lf\{\sum_{j\in\nn\setminus J}\lf|Q_j\r|^{1-p\az}
       \lf[\fint_{Q_j}\lf|f-P^{(s)}_{Q_j}(f)\r|^q\r]^{\frac{p}{q}}\r\}^{\frac{1}{p}}
       +\lf[\sum_{j\in J}\lf|Q_j\r|^{1-p\az}
       \lf(\fint_{Q_j}\lf|f\r|^q\r)^{\frac{p}{q}}\r]^{\frac{1}{p}}\\
   &\quad\le\lf\|f\r\|_{JN_{(p,q,s)_{\az}}(\cX)}
        +c^{-n\az}_0\lf(\sum_{j\in J}\lf|Q_j\r|
         \fint_{Q_j}\lf|f\r|^p\r)^{\frac{1}{p}}
         \ls \max\lf\{\lf\|f\r\|_{JN_{(p,q,s)_{\az}}(\cX)},\lf\|f\r\|_{L^p(\cX)}\r\},
\end{align*}
which implies that $f\in {jn_{(p,q,s)_{\az}}(\cX)}$ and
$\|f\|_{jn_{(p,q,s)_{\az}}(\cX)}
\ls \max\{\|f\|_{JN_{(p,q,s)_{\az}}(\cX)},\|f\|_{L^p(\cX)}\}$. This proves
$JN_{(p,q,s)_{\az}}(\cX)\cap L^p(\cX) \subset jn_{(p,q,s)_{\az}}(\cX)$.

Now, we show $$jn_{(p,q,s)_{\az}}(\cX)\subset JN_{(p,q,s)_{\az}}(\cX)\cap L^p(\cX).$$
Since $jn_{(p,q,s)_{\az}}(\cX)\subset JN_{(p,q,s)_{\az}}(\cX)$ [see Proposition \ref{rem.jnandJN}(i)],
it follows that
we only need to show
$jn_{(p,q,s)_{\az}}(\cX)\subset L^p(\cX)$.
Let $f\in jn_{(p,q,s)_{\az}}(\cX)$.
First we assume that $\cX=\rn$ and $c_0=1$.
For any $k\in\zz_{+}$, let
$
\mathcal{D}_k:=\{2^{-k}[(0,1]^k+l]:\,\,l\in\zz^n\}
$
be the collection of all dyadic subcubes with length $2^{-k}$ of $\rn$.
Then rewrite $\mathcal{D}_k$ as $\{Q^{(k)}_j\}_{j\in\nn}$.
Clearly, for any $l,k\in\zz_{+}$ and $l\le k$, there exists a map
$\phi_{k,l}:\,\,\nn\to\nn$ such that $Q^{(k)}_j\subset Q^{(l)}_{\phi_{k,l}(j)}$
for any $j\in\nn$.
From the H\"{o}lder inequality, $|Q^{(k)}_j|=2^{-nk}$
and Definition \ref{def.jnpqs},
we deduce that, for any $k\in\nn$,
\begin{align}\label{g2}
   \lf\{\sum_{j\in\nn}\lf|Q^{(k)}_j\r|\lf[\fint_{Q^{(k)}_j}
       \lf|f-P^{(s)}_{Q^{(k)}_j,1}(f)\r|\r]^p\r\}^{\frac{1}{p}}
   &\le \lf\{\sum_{j\in\nn}\lf|Q^{(k)}_j\r|\lf[\fint_{Q^{(k)}_j}
       \lf|f-P^{(s)}_{Q^{(k)}_j,1}(f)\r|^q\r]^\frac{p}{q}\r\}^{\frac{1}{p}}\\
   &\le 2^{-n\az k}\lf\|f\r\|_{jn_{(p,q,s)_{\az}}(\rn)},\notag
\end{align}
which, combined with
$Q^{(k)}_j\subset Q^{(k-1)}_{\phi_{k,k-1}(j)}$, implies that
\begin{align}\label{g1}
  &\lf\{\sum_{j\in\nn}\lf|Q^{(k)}_j\r|\lf[\fint_{Q^{(k)}_j}
     \lf|f-P^{(s)}_{Q^{(k-1)}_{\phi_{k,k-1}(j)},1}(f)\r|\r]^p\r\}^{\frac{1}{p}}\\
  &\quad\le 2^n\lf\{\sum_{j\in\nn}\lf|Q^{(k)}_j\r|\lf[\fint_{Q^{(k-1)}_{\phi_{k,k-1}(j)}}
     \lf|f-P^{(s)}_{Q^{(k-1)}_{\phi_{k,k-1}(j)},1}(f)\r|\r]^p\r\}^{\frac{1}{p}}\notag\\
  &\quad= 2^n\lf[\sum_{i\in\nn}\sum_{\{j:\,\,Q^{(k)}_j\subset Q^{(k-1)}_i\}}
     \lf|Q^{(k)}_j\r|\lf\{\fint_{Q^{(k-1)}_i}
     \lf|f-P^{(s)}_{Q^{(k-1)}_i,1}(f)\r|\r\}^p\r]^{\frac{1}{p}}\notag\\
  &\quad= 2^n\lf\{\sum_{i\in\nn}\lf|Q^{(k-1)}_i\r|\lf[\fint_{Q^{(k-1)}_i}
     \lf|f-P^{(s)}_{Q^{(k-1)}_i,1}(f)\r|\r]^p\r\}^{\frac{1}{p}}
  \le 2^{n-n\az (k-1)}\lf\|f\r\|_{jn_{(p,q,s)_{\az}}(\rn)}.\notag
\end{align}
By the Minkowski inequality, \eqref{g2} and \eqref{g1}, we have, for any $k\in\nn$,
\begin{align}\label{g4}
&\lf\{\sum_{j\in\nn}\lf|Q^{(k)}_j\r|\lf[\fint_{Q^{(k)}_j}
     \lf|P^{(s)}_{Q^{(k)}_j,1}(f)-P^{(s)}_{Q^{(k-1)}_{\phi_{k,k-1}(j)},1}(f)\r|\r]^p\r\}^{\frac{1}{p}}\\
  &\quad\le\lf\{\sum_{j\in\nn}\lf|Q^{(k)}_j\r|\lf[\fint_{Q^{(k)}_j}
     \lf|P^{(s)}_{Q^{(k)}_j,1}(f)-f\r|\r]^p\r\}^{\frac{1}{p}}
     +\lf\{\sum_{j\in\nn}\lf|Q^{(k)}_j\r|\lf[\fint_{Q^{(k)}_j}
     \lf|f-P^{(s)}_{Q^{(k-1)}_{\phi_{k,k-1}(j)},1}(f)\r|\r]^p\r\}^{\frac{1}{p}}\notag\\
  &\quad \le \lf(1+2^{n+n\az}\r)2^{-n\az k}\lf\|f\r\|_{jn_{(p,q,s)_{\az}}(\rn)}.\notag
\end{align}
From Lemma \ref{lem1}, we deduce that,
for any $k,l,j\in\nn$, $l\le k$,
$P\in \mathcal{P}_s(\rn)$ and $Q^{(l)}_{\phi_{k,l}(j)}\supset Q^{(k)}_j$,
\begin{align*}
\fint_{Q^{(k)}_j}\lf|P\r|
\le \sup_{x\in Q^{(k)}_j}\lf|P(x)\r|
\le \sup_{x\in Q^{(l)}_{\phi_{k,l}(j)}}\lf|P(x)\r|
\le C_{(s,n)}\fint_{Q^{(l)}_{\phi_{k,l}(j)}}\lf|P\r|,
\end{align*}
which, together with \eqref{g4} and some arguments similar to those used in the proof of \eqref{g1}, implies that
\begin{align*}
    &\lf\{\sum_{j\in\nn}\lf|Q^{(k)}_j\r|\lf[\fint_{Q^{(k)}_j}\lf|
      P^{(s)}_{Q^{(l)}_{\phi_{k,l}(j)},1}(f)-P^{(s)}_{Q^{(l-1)}_{\phi_{k,l-1}(j)},1}(f)\r|\r]^p\r\}^{\frac{1}{p}}\\
    &\quad\le C_{(s,n)}\lf\{\sum_{j\in\nn}\lf|Q^{(k)}_j\r|\lf[\fint_{Q^{(l)}_{\phi_{k,l}(j)}}\lf|
      P^{(s)}_{Q^{(l)}_{\phi_{k,l}(j)},1}(f)-P^{(s)}_{Q^{(l-1)}_{\phi_{k,l-1}(j)},1}(f)\r|\r]^p\r\}^{\frac{1}{p}}\\
    &\quad= C_{(s,n)}\lf\{\sum_{i\in\nn}
      \lf|Q^{(l)}_i\r|\lf[\fint_{Q^{(l)}_i}\lf|
      P^{(s)}_{Q^{(l)}_i,1}(f)-P^{(s)}_{Q^{(l-1)}_{\phi_{l,l-1}(i)},1}(f)\r|\r]^p\r\}^{\frac{1}{p}}\\
    &\quad\le C_{(s,n)}\lf(1+2^{n+n\az}\r)2^{-n\az l}\lf\|f\r\|_{jn_{(p,q,s)_{\az}}(\rn)},
\end{align*}
where $C_{(s,n)}$ denotes a positive constant depending on $s$ and $n$.
By this, the Minkowski inequality and \eqref{g2},
we conclude that, for any $k\in\zz_{+}$,
\begin{align*}
  &\lf\{\sum_{j\in\nn}\lf|Q^{(k)}_j\r|\lf[\fint_{Q^{(k)}_j}
     \lf|f\r|\r]^p\r\}^{\frac{1}{p}}\\
  &\quad=\lf(\sum_{j\in\nn}\lf|Q^{(k)}_j\r|\lf\{\fint_{Q^{(k)}_j}
     \lf|f-P^{(s)}_{Q^{(k)}_j,1}(f)
     +\sum^{k}_{l=1}\lf[P^{(s)}_{Q^{(l)}_{\phi_{k,l}(j)},1}(f)
     -P^{(s)}_{Q^{(l-1)}_{\phi_{k,l-1}(j)},1}(f)\r]\r|\r\}^p\r)^{\frac{1}{p}}\\
  &\quad\le \lf\{\sum_{j\in\nn}\lf|Q^{(k)}_j\r|\lf[\fint_{Q^{(k)}_j}
     \lf|f-P^{(s)}_{Q^{(k)}_j,1}(f)\r|\r]^p\r\}^{\frac{1}{p}}\\
  &\quad\quad+\sum^{k}_{l=1}\lf\{\sum_{j\in\nn}\lf|Q^{(k)}_j\r|\lf[\fint_{Q^{(k)}_j}\lf|
      P^{(s)}_{Q^{(l)}_{\phi_{k,l}(j)},1}(f)-P^{(s)}_{Q^{(l-1)}_{\phi_{k,l-1}(j)},1}(f)\r|\r]^p\r\}^{\frac{1}{p}}\\
  &\quad\le \lf[2^{-n\az k}+C_{(s,n)}\lf(1+2^{n+n\az}\r)\sum^{k}_{l=1}2^{-n\az l}\r]
      \lf\|f\r\|_{jn_{(p,q,s)_{\az}}(\rn)}
      \ls\lf\|f\r\|_{jn_{(p,q,s)_{\az}}(\rn)},
\end{align*}
where the first equality holds true because, for any $j\in\nn$,
$P^{(s)}_{Q^{(0)}_{\phi_{k,0}(j)},1}(f)=0$.
From this, the Lebesgue differential theorem and the Fatou lemma, it follows that
\begin{align*}
  \int_{\rn}\lf|f\r|^p
  &=\int_{\rn}\lf\{\liminf_{k\to\fz}\sum_{j\in\nn}\lf[\fint_{Q^{(k)}_j}\lf|f\r|\r]^p\mathbf{1}_{{Q^{(k)}_j}}\r\}
     \le\liminf_{k\to\fz}\int_{\rn}\lf\{\sum_{j\in\nn}\lf[\fint_{Q^{(k)}_j}\lf|f\r|\r]^p\mathbf{1}_{{Q^{(k)}_j}}\r\}\\
  &=\liminf_{k\to\fz}\sum_{j\in\nn}\lf|{Q^{(k)}_j}\r|\lf[\fint_{Q^{(k)}_j}\lf|f\r|\r]^p
      \ls\lf\|f\r\|^p_{jn_{(p,q,s)_{\az}}(\rn)}.
\end{align*}
Combining this and Proposition $\ref{prop.jnc}$, we obtain $jn_{(p,q,s)_{\az}}(\rn)\subset L^p(\rn)$.
If $\cX$ is a cube $Q_0\subsetneqq\rn$, the proof of $jn_{(p,q,s)_{\az}}(Q_0)\subset L^p(Q_0)$ is similar to
the proof of $jn_{(p,q,s)_{\az}}(\rn)\subset L^p(\rn)$ and the details are omitted. Therefore,
$jn_{(p,q,s)_{\az}}(\cX)\subset jn_{(p,q,s)_{\az}}(\cX)\cap L^p(\cX)$. This finishes the proof of Proposition \ref{p.a}.
\end{proof}

The following two propositions show that the localized Campanato space is the limit of the localized John--Nirenberg--Campanato space.

\begin{proposition}
\label{prop.jncam}
Let $p\in(1,\fz)$, $q\in[1,\fz)$, $s\in\zz_{+}$, $\az\in[0,\fz)$ and $Q_0\subsetneqq\rn$ be a cube. Then,
for any $f\in L^1(Q_0)$,
$$\|f\|_{\Lambda_{(\az,q,s)}(Q_0)}=\lim_{p\to\fz}\|f\|_{jn_{(p,q,s)_{\az}}(Q_0)}.$$
Moreover,
$${\Lambda_{(\az,q,s)}(Q_0)}=\lf\{f\in\bigcap_{p\in(1,\fz)}jn_{(p,q,s)_{\az}}(Q_0):
\,\,\lim_{p\to\fz}\|f\|_{jn_{(p,q,s)_{\az}}(Q_0)}<\fz\r\}.$$
\end{proposition}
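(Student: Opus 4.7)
The proof splits naturally into two matching inequalities, both exploiting the factorization
$$
\Bigl[|Q_j|\bigl\{|Q_j|^{-\alpha}\bigl(\fint_{Q_j}|f-P^{(s)}_{Q_j,c_0}(f)|^{q}\bigr)^{1/q}\bigr\}^{p}\Bigr]^{1/p}
=|Q_j|^{1/p}\cdot|Q_j|^{-\alpha}\Bigl[\fint_{Q_j}|f-P^{(s)}_{Q_j,c_0}(f)|^{q}\Bigr]^{1/q},
$$
together with the elementary observation that $|Q_j|^{1/p}\to1$ as $p\to\fz$ whenever $|Q_j|\in(0,\fz)$.

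For the lower bound, I would fix an arbitrary cube $Q\subset Q_0$ and use the singleton collection $\{Q\}$ as a test family in Definition \ref{def.jnpqs}. This yields
$$
\|f\|_{jn_{(p,q,s)_{\az}}(Q_0)}\ge|Q|^{1/p}\cdot|Q|^{-\alpha}\Bigl[\fint_{Q}|f-P^{(s)}_{Q,c_0}(f)|^{q}\Bigr]^{1/q}.
$$
Letting $p\to\fz$ on the right gives precisely $|Q|^{-\alpha}[\fint_{Q}|f-P^{(s)}_{Q,c_0}(f)|^{q}]^{1/q}$, and then taking the supremum over all cubes $Q\subset Q_0$ produces $\liminf_{p\to\fz}\|f\|_{jn_{(p,q,s)_{\az}}(Q_0)}\ge\|f\|_{\Lambda_{(\az,q,s)}(Q_0)}$.

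For the upper bound, I would take any interior pairwise disjoint family $\{Q_j\}_{j\in\nn}\subset Q_0$ and bound each quantity in curly brackets by the Campanato norm $\|f\|_{\Lambda_{(\az,q,s)}(Q_0)}$. This produces
$$
\Bigl[\sum_{j\in\nn}|Q_j|\bigl\{|Q_j|^{-\alpha}\bigl(\fint_{Q_j}|f-P^{(s)}_{Q_j,c_0}(f)|^{q}\bigr)^{1/q}\bigr\}^{p}\Bigr]^{1/p}
\le\|f\|_{\Lambda_{(\az,q,s)}(Q_0)}\Bigl(\sum_{j\in\nn}|Q_j|\Bigr)^{1/p}
\le\|f\|_{\Lambda_{(\az,q,s)}(Q_0)}|Q_0|^{1/p},
$$
where the last step uses disjointness of interiors together with $Q_j\subset Q_0$. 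Taking the supremum over all such families and letting $p\to\fz$ gives $\limsup_{p\to\fz}\|f\|_{jn_{(p,q,s)_{\az}}(Q_0)}\le\|f\|_{\Lambda_{(\az,q,s)}(Q_0)}$, matching the lower bound and proving the first identity.

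The ``moreover'' set identity is then immediate from the two inequalities. If $f\in\Lambda_{(\az,q,s)}(Q_0)$, the upper bound argument shows $\|f\|_{jn_{(p,q,s)_{\az}}(Q_0)}\le|Q_0|^{1/p}\|f\|_{\Lambda_{(\az,q,s)}(Q_0)}<\fz$ for every $p\in(1,\fz)$, so $f$ lies in every $jn_{(p,q,s)_{\az}}(Q_0)$ and the limit is finite. Conversely, if $f\in\bigcap_{p\in(1,\fz)}jn_{(p,q,s)_{\az}}(Q_0)$ with $\lim_{p\to\fz}\|f\|_{jn_{(p,q,s)_{\az}}(Q_0)}<\fz$, the single-cube lower bound already derived forces $\|f\|_{\Lambda_{(\az,q,s)}(Q_0)}$ to be finite, hence $f\in\Lambda_{(\az,q,s)}(Q_0)$; the only point worth verifying is that $f\in L^{q}_{\loc}(Q_0)$, which follows from the hypothesis $f\in L^{1}(Q_0)$ together with the finiteness of $\fint_{Q}|f-P^{(s)}_{Q,c_0}(f)|^{q}$ for each cube $Q$ (using $|P^{(s)}_{Q,c_0}(f)|\le C_{(s)}\fint_{Q}|f|<\fz$). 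No serious obstacle is anticipated; the whole proof is driven by the factorization and the trivial measure bound $\sum_{j}|Q_j|\le|Q_0|$.
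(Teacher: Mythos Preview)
Your proposal is correct and follows essentially the same approach as the paper: the upper bound via $\sum_j|Q_j|\le|Q_0|$ and the lower bound via a single test cube are exactly the two ingredients the paper uses. The paper organizes the argument as a case split on whether $\|f\|_{\Lambda_{(\az,q,s)}(Q_0)}$ is finite and phrases the lower bound with an $\epsilon$-close cube $Q_\epsilon$, whereas you handle both cases uniformly and take the supremum directly; this is a cosmetic difference only. Your closing remark about $L^q_{\loc}$ is harmless but unnecessary, since membership in any $jn_{(p,q,s)_\az}(Q_0)$ already requires $f\in L^q_{\loc}(Q_0)$ by Definition~\ref{def.jnpqs}.
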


\begin{proof}
Let $p,$ $q,$ $s,$ $\az$ and $Q_0$ be as in this proposition and
$c_0\in(0,\ell(Q_0))$.
Let $f\in L^1(Q_0)$.
We prove this proposition by two cases.

Case 1) $\|f\|_{\Lambda_{(\az,q,s)}(Q_0)}=\fz$.
For any $N\in (0,\fz)$, by Definition \ref{def.cam},
we know that there exists a cube $Q_N \subset Q_0$ such that
$$\lf|Q_N\r|^{-\az}\lf[\fint_{Q_N}\lf|f-P^{(s)}_{Q_N,c_0}(f)\r|^q\r]^{\frac{1}{q}}>N.$$
From this, it follows that
$$
\|f\|_{jn_{(p,q,s)_{\az}}(Q_0)}\ge \lf\{\lf|Q_N\r|^{1-p\az}\lf[\fint_{Q_N}\lf|f-P^{(s)}_{Q_N,c_0}(f)\r|^q\r]^{\frac{p}{q}}\r\}
^{\frac{1}{p}}\\
\ge|Q_N|^{\frac{1}{p}}N,
$$
which implies that $\lim_{p\to\fz}\|f\|_{jn_{(p,q,s)_{\az}}(Q_0)}=\fz$.
Thus, in this case,
$$\|f\|_{\Lambda_{(\az,q,s)}(Q_0)}=\lim_{p\to\fz}\|f\|_{jn_{(p,q,s)_{\az}}(Q_0)}.$$
Case 2) $\|f\|_{\Lambda_{(\az,q,s)}(Q_0)}<\fz$.
By Definitions \ref{def.cam} and \ref{def.jnpqs}, we know that
\begin{align*}
  \lf\|f\r\|_{jn_{(p,q,s)_{\az}}(Q_0)}\le \sup\lf[\|f\|^p_{\Lambda_{(\az,q,s)}(Q_0)}\sum_{j\in\nn}\lf|Q_j\r|\r]^{\frac{1}{p}}
      \le |Q_0|^{\frac{1}{p}}\|f\|_{\Lambda_{(\az,q,s)}(Q_0)} ,
\end{align*}
where the supremum is taken over all collections of interior pairwise disjoint cubes $\{Q_j\}_{j\in\nn}$ in $Q_0$.
Thus, we have $f\in {jn_{(p,q,s)_{\az}}(Q_0)}$,
which further implies that
\begin{align}\label{eq.jncam1.2}
{\Lambda_{(\az,q,s)}(Q_0)}\subset \bigcap_{p\in(1,\fz)}jn_{(p,q,s)_{\az}}(Q_0)
\end{align}
and
\begin{align}\label{eq.jncam1.1}
\limsup_{p\to\fz}\|f\|_{jn_{(p,q,s)_{\az}}(Q_0)}\le \|f\|_{\Lambda_{(\az,q,s)}(Q_0)}.\end{align}

On the other hand,
from Definition \ref{def.cam}, we deduce that,
for any $\epsilon \in (0,\|f\|_{\Lambda_{(\az,q,s)}(Q_0)})$,
there exists a cube $Q_{\epsilon}$ such that $$|Q_{\epsilon}|^{-\az}\lf[\fint_{Q_{\epsilon}}\lf|f-P^{(s)}_{Q_{\epsilon},c_0}(f)\r|^q\r]^{\frac{1}{q}}>\epsilon.$$
Combining this and Definition \ref{def.jnpqs}, we obtain
$$\|f\|_{jn_{(p,q,s)_{\az}}(Q_0)}\ge \lf(\lf|Q_{\epsilon}\r|\lf\{\lf|Q_{\epsilon}\r|^{-\az}
\lf[\fint_{Q_{\epsilon}}\lf|f-P^{(s)}_{Q_{\epsilon},c_0}(f)\r|^q\r]^{\frac{1}{q}}\r\}^p\r)^{\frac{1}{p}}
\ge |Q_{\epsilon}|^{\frac{1}{p}}\epsilon.$$
Letting $p\to \fz$ and $\epsilon \to \|f\|_{\Lambda_{(\az,q,s)}(Q_0)}$, we have
$\liminf_{p\to\fz}\|f\|_{jn_{(p,q,s)_{\az}}(Q_0)} \ge \|f\|_{\Lambda_{(\az,q,s)}(Q_0)}. $
By this and \eqref{eq.jncam1.1}, we obtain
$\lim_{p\to\fz}\|f\|_{jn_{(p,q,s)_{\az}}(Q_0)} = \|f\|_{\Lambda_{(\az,q,s)}(Q_0)}$.
From this and \eqref{eq.jncam1.2}, we further deduce that
$${\Lambda_{(\az,q,s)}(Q_0)}=
\lf\{f\in\bigcap_{p\in(1,\fz)}jn_{(p,q,s)_{\az}}(Q_0)
:\,\,\lim_{p\to\fz}\|f\|_{jn_{(p,q,s)_{\az}}(Q_0)}<\fz\r\}.$$
This finishes the proof of Proposition \ref{prop.jncam}.
\end{proof}

\begin{proposition}
\label{prop.jncam2}
Let $p\in(1,\fz)$, $q\in[1,\fz)$, $s\in\zz_{+}$ and $\az\in[0,\fz)$. Let $f\in jn_{(p,q,s)_{\az}}(\rn)\cap\Lambda_{(\az,q,s)}(\rn)$.
Then $f\in\bigcap_{r\in(p,\fz)}jn_{(r,q,s)_{\az}}(\rn)$ and
$$\|f\|_{\Lambda_{(\az,q,s)}(\rn)}=\lim_{r\to\fz}\|f\|_{jn_{(r,q,s)_{\az}}(\rn)}.$$
\end{proposition}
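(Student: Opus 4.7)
The plan is to mirror the two-sided argument of Case 2 in the proof of Proposition \ref{prop.jncam}, but to compensate for the failure of the absorbing bound $\sum_{j\in\nn}|Q_j|\le|Q_0|$, which is unavailable when the ambient domain is $\rn$, by an interpolation that trades one power of the Campanato norm for a power of the $jn_{(p,q,s)_{\az}}(\rn)$-norm. The hypothesis $f\in jn_{(p,q,s)_{\az}}(\rn)$ is precisely what replaces finiteness of the ambient domain in the cube case.

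For the upper bound, I would fix $r\in(p,\fz)$ and an arbitrary collection $\{Q_j\}_{j\in\nn}$ of interior pairwise disjoint cubes in $\rn$, and set
$$T_j:=\lf|Q_j\r|^{-\az}\lf[\fint_{Q_j}\lf|f-P^{(s)}_{Q_j,c_0}(f)\r|^q\r]^{\frac{1}{q}}.$$
Since $f\in\Lambda_{(\az,q,s)}(\rn)$, each $T_j$ is bounded by $\|f\|_{\Lambda_{(\az,q,s)}(\rn)}$; splitting $T_j^r=T_j^{r-p}T_j^p$ and pulling the uniform bound out of the sum yields
$$\sum_{j\in\nn}\lf|Q_j\r|T_j^r\le\|f\|_{\Lambda_{(\az,q,s)}(\rn)}^{r-p}\sum_{j\in\nn}\lf|Q_j\r|T_j^p\le\|f\|_{\Lambda_{(\az,q,s)}(\rn)}^{r-p}\|f\|_{jn_{(p,q,s)_{\az}}(\rn)}^{p}.$$
Taking the $r$-th root and the supremum over $\{Q_j\}_{j\in\nn}$ shows at once that $f\in jn_{(r,q,s)_{\az}}(\rn)$ for every $r\in(p,\fz)$ and that
$$\|f\|_{jn_{(r,q,s)_{\az}}(\rn)}\le\|f\|_{\Lambda_{(\az,q,s)}(\rn)}^{(r-p)/r}\|f\|_{jn_{(p,q,s)_{\az}}(\rn)}^{p/r}.$$
Letting $r\to\fz$ then gives $\limsup_{r\to\fz}\|f\|_{jn_{(r,q,s)_{\az}}(\rn)}\le\|f\|_{\Lambda_{(\az,q,s)}(\rn)}$; the degenerate case $\|f\|_{jn_{(p,q,s)_{\az}}(\rn)}=0$ forces $T_Q=0$ for every cube $Q$ via the singleton test, so that both norms collapse to zero and the statement becomes trivial.

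For the matching lower bound, I would recycle verbatim the one-cube test from Case 2 of Proposition \ref{prop.jncam}: given any $\epsilon\in(0,\|f\|_{\Lambda_{(\az,q,s)}(\rn)})$, Definition \ref{def.cam} supplies a cube $Q_{\epsilon}\subset\rn$ with $T_{Q_{\epsilon}}>\epsilon$; testing the supremum in Definition \ref{def.jnpqs} against the family that consists of $Q_{\epsilon}$ alone (with the remaining cubes empty) gives $\|f\|_{jn_{(r,q,s)_{\az}}(\rn)}\ge|Q_{\epsilon}|^{1/r}\epsilon$, so that first letting $r\to\fz$ and then $\epsilon\to\|f\|_{\Lambda_{(\az,q,s)}(\rn)}$ closes the argument. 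The only genuine obstacle relative to the bounded cube case is the upper bound: without an absorbing volume, the Campanato hypothesis must supply a uniform pointwise control on the $T_j$ so that the split $T_j^r=T_j^{r-p}T_j^p$ can convert $jn_{(p,q,s)_{\az}}$-mass into $jn_{(r,q,s)_{\az}}$-mass, and it is this interpolation trick that is the essential new ingredient beyond the proof of Proposition \ref{prop.jncam}.
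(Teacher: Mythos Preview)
Your proposal is correct and follows essentially the same approach as the paper's own proof: the interpolation split $T_j^r=T_j^{r-p}T_j^p$ combined with the uniform Campanato bound for the $\limsup$, and the single-cube test inherited from Case~2 of Proposition~\ref{prop.jncam} for the $\liminf$. Your extra remark handling the degenerate case $\|f\|_{jn_{(p,q,s)_{\az}}(\rn)}=0$ is a harmless addition that the paper leaves implicit.
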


\begin{proof}
Let $p$, $q$, $s$ and $\az$ be as in this proposition,
$c_0\in(0,\fz)$ and
$f\in jn_{(p,q,s)_{\az}}(\rn)\cap\Lambda_{(\az,q,s)}(\rn)$.
For any $r\in(p,\fz)$, by Definitions \ref{def.cam} and \ref{def.jnpqs},
we have
\begin{align*}
  \|f\|^r_{jn_{(r,q,s)_{\az}}(\rn)}
  &\le\sup\sum_{j\in\nn}\lf|Q_j\r|\lf\{\lf|Q_j\r|^{-\az}\lf[\fint_{Q_j}\lf|f-P^{(s)}_{Q_j,c_0}(f)\r|^q
       \r]^{\frac{1}{q}}\r\}^p\|f\|^{r-p}_{\Lambda_{(\az,q,s)}(\rn)}\\
  &=\|f\|^p_{jn_{(p,q,s)_{\az}}(\rn)}\|f\|^{r-p}_{\Lambda_{(\az,q,s)}(\rn)},
\end{align*}
where the supremum is taken over all collections of interior mutually disjoint cubes $\{Q_j\}_{j\in\nn}$ in $\rn$.
Thus, we obtain $f\in\bigcap_{r\in(p,\fz)}jn_{(r,q,s)_{\az}}(\rn)$ and, for any $r\in(p,\fz)$,
$$\|f\|_{jn_{(r,q,s)_{\az}}(\rn)}\le \|f\|^{\frac{p}r}_{jn_{(p,q,s)_{\az}}(\rn)}\|f\|^{1-\frac{p}{r}}_{\Lambda_{(\az,q,s)}(\rn)}.$$
Letting $r\to\fz$, we obtain
$\limsup_{r\to\fz}\|f\|_{jn_{(r,q,s)_{\az}}(\rn)}\le\|f\|_{\Lambda_{(\az,q,s)}(\rn)}.$

On the other hand, from some similar arguments to those used in the proof of Proposition \ref{prop.jncam},
we deduce that
$$\liminf_{r\to\fz}\|f\|_{jn_{(r,q,s)_{\az}}(\rn)}\ge\|f\|_{\Lambda_{(\az,q,s)}(\rn)}.$$
Therefore, $\|f\|_{\Lambda_{(\az,q,s)}(\rn)}=\lim_{r\to\fz}\|f\|_{jn_{(r,q,s)_{\az}}(\rn)}$.
This finishes the proof of Proposition \ref{prop.jncam2}.
\end{proof}

\begin{remark}
By Propositions \ref{prop.jncam} and \ref{prop.jncam2}, we obtain the relations between the localized John--Nirenberg
spaces and the local $\BMO$ space. Indeed,
if $p\in(1,\fz)$ and $Q_0\subsetneqq\rn$ is a cube, we then have
$$ \mathrm{bmo}\,(Q_0)=\lf\{f\in\bigcap_{p\in(1,\fz)}jn_p(Q_0):
\,\,\lim_{p\to\fz}\|f\|_{jn_p(Q_0)}<\fz\r\};$$
if $p\in(1,\fz)$ and $f\in jn_p(\rn)\cap\mathrm{bmo}\,(\rn)$,
then $f\in\bigcap_{r\in(p,\fz)}jn_r(\rn)$ and
$$\|f\|_{\mathrm{bmo}\,(\rn)}=\lim_{r\to\fz}\|f\|_{jn_r(\rn)}.$$
\end{remark}

\begin{remark}
Recall that the limit case of the John--Nirenberg--Campanato space $JN_{(p,q,s)_{\az}}(\cX)$ or $L^p(\cX)$
is the Campanato space $\mathcal{C}_{(\az,q,s)}(\cX)$ [see Remark \ref{l1}(ii) for its definition] or $L^{\fz}(\cX)$, respectively;
see, for instance, \cite[Propositon 1.5 and Remark 1.6]{TYY}.
From this, Propositions \ref{prop.jncam}, \ref{prop.jncam2} and \ref{p.a},
we deduce that, for any $\az\in(0,\fz)$, $q\in [1,\fz)$ and $s\in\zz_{+}$,
\begin{align*}
\Lambda_{(\az,q,s)}(\cX)=\mathcal{C}_{(\az,q,s)}(\cX)\cap L^{\fz}(\cX),
\end{align*}
which was originally proved in \cite[Theorem 4.1]{JSW}.
\end{remark}

\section{Equivalent norms on $jn_{(p,q,s)_{\az}}(\cX)$\label{sec.eninjn}}

In this section, we consider the invariance of $jn_{(p,q,s)_{\az}}(\cX)$
on its indices in the appropriate range.
We first show that, for any $p\in(1,\fz)$, $s\in\zz_{+}$ and $\az\in[0,\fz)$,
$jn_{(p,q,s)_{\az}}(\cX)$ is invariant on $q\in [1,p)$.

\begin{proposition}
\label{prop.jneq1}
Let $p\in(1,\fz)$, $q\in[1,p)$, $s\in\zz_{+}$ and $\az\in[0,\fz)$. Then
$jn_{(p,q,s)_{\az}}(\mathcal{X})=jn_{(p,1,s)_{\az}}(\mathcal{X})$ with equivalent norms.
\end{proposition}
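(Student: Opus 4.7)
The estimate $\|f\|_{jn_{(p,1,s)_{\az}}(\cX)}\le \|f\|_{jn_{(p,q,s)_{\az}}(\cX)}$ is immediate from the H\"{o}lder inequality applied inside each cube, so the substance of the proposition is the reverse inequality, and my plan is as follows. Given $f\in jn_{(p,1,s)_{\az}}(\cX)$ and any collection $\{Q_j\}_{j\in\nn}$ of interior pairwise disjoint cubes in $\cX$, I would partition $\nn=J_s\cup J_\ell$ with $J_s:=\{j\in\nn:\,\ell(Q_j)<c_0\}$ and $J_\ell:=\nn\setminus J_s$. On $J_s$ one has $P^{(s)}_{Q_j,c_0}(f)=P^{(s)}_{Q_j}(f)$, so the corresponding terms fit directly into the $JN$-seminorm; whereas on $J_\ell$ one has $P^{(s)}_{Q_j,c_0}(f)=0$, which is the genuinely localized feature and the only non-trivial case to handle.

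For $j\in J_\ell$, the Minkowski inequality together with \eqref{eq.dxsC} will give
$$\lf[\fint_{Q_j}|f|^q\r]^{\frac{1}{q}}\le \lf[\fint_{Q_j}\lf|f-P^{(s)}_{Q_j}(f)\r|^q\r]^{\frac{1}{q}}+\sup_{Q_j}\lf|P^{(s)}_{Q_j}(f)\r|\ls \lf[\fint_{Q_j}\lf|f-P^{(s)}_{Q_j}(f)\r|^q\r]^{\frac{1}{q}}+\fint_{Q_j}|f|,$$
and since $P^{(s)}_{Q_j,c_0}(f)=0$ on $J_\ell$, the last average equals $\fint_{Q_j}|f-P^{(s)}_{Q_j,c_0}(f)|$. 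Raising to the $p$-th power, multiplying by $|Q_j|^{1-p\az}$ and summing over $j\in\nn$ (using $P^{(s)}_{Q_j,c_0}(f)=P^{(s)}_{Q_j}(f)$ on $J_s$) should produce
\begin{align*}
&\sum_{j\in\nn}\lf|Q_j\r|^{1-p\az}\lf[\fint_{Q_j}\lf|f-P^{(s)}_{Q_j,c_0}(f)\r|^q\r]^{\frac{p}{q}}\\
&\quad\ls \sum_{j\in\nn}\lf|Q_j\r|^{1-p\az}\lf[\fint_{Q_j}\lf|f-P^{(s)}_{Q_j}(f)\r|^q\r]^{\frac{p}{q}}
+\sum_{j\in J_\ell}\lf|Q_j\r|^{1-p\az}\lf(\fint_{Q_j}\lf|f-P^{(s)}_{Q_j,c_0}(f)\r|\r)^p\\
&\quad\le \|f\|_{JN_{(p,q,s)_{\az}}(\cX)}^p+\|f\|_{jn_{(p,1,s)_{\az}}(\cX)}^p.
\end{align*}
Then I would invoke Lemma \ref{b1} (the John--Nirenberg lemma for $JN_{(p,q,s)_{\az}}(\cX)$) and Proposition \ref{rem.jnandJN}(i) to obtain $\|f\|_{JN_{(p,q,s)_{\az}}(\cX)}\ls \|f\|_{JN_{(p,1,s)_{\az}}(\cX)}\le \|f\|_{jn_{(p,1,s)_{\az}}(\cX)}$, and taking the supremum over $\{Q_j\}_{j\in\nn}$ would close the argument.

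The hard part is precisely the cubes in $J_\ell$: for these the localized polynomial $P^{(s)}_{Q_j,c_0}(f)$ degenerates to $0$, so the $JN$-type John--Nirenberg inequality cannot be applied to them directly, and the obvious attempt to quote $\|f\|_{JN_{(p,q,s)_{\az}}}$ misses the correct scaling entirely. The resolution is to reinsert the true polynomial via $|f|\le |f-P^{(s)}_{Q_j}(f)|+|P^{(s)}_{Q_j}(f)|$: the first summand contributes a standard $JN$-oscillation (controlled by Lemma \ref{b1}), while the second summand is absorbed by \eqref{eq.dxsC} into $\fint_{Q_j}|f|$, which on $J_\ell$ coincides with $\fint_{Q_j}|f-P^{(s)}_{Q_j,c_0}(f)|$ and is therefore exactly what the hypothesis $\|f\|_{jn_{(p,1,s)_{\az}}(\cX)}<\fz$ controls.
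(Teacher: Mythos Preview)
Your argument is correct and is essentially the same as the paper's own proof: the same partition into small and large cubes, the same use of \eqref{eq.dxsC} to trade $P^{(s)}_{Q_j}(f)$ for $\fint_{Q_j}|f|$ on the large cubes, and the same invocation of Lemma~\ref{b1} together with Proposition~\ref{rem.jnandJN}(i) to control the $JN$-part. The only cosmetic slip is that the embedding in Proposition~\ref{rem.jnandJN}(i) carries a constant depending on $s$, so your final ``$\le$'' should be ``$\lesssim$''.
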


To show Proposition \ref{prop.jneq1}, we need to
use the following John--Nirenberg lemma on $JN_{(p,q,s)_{\az}}(\cX)$,
which is just \cite[Proposition 1.19]{TYY}.

\begin{lemma}\label{b1}
Let $p\in(1,\fz)$, $q\in[1,p)$, $s\in\zz_{+}$ and $\az\in[0,\fz)$. Then
$JN_{(p,q,s)_{\az}}(\mathcal{X})=JN_{(p,1,s)_{\az}}(\mathcal{X})$ with equivalent norms.
\end{lemma}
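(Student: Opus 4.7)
The easy inclusion $JN_{(p,q,s)_\alpha}(\cX)\subset JN_{(p,1,s)_\alpha}(\cX)$ is immediate from H\"older's inequality applied on each cube (since $q\ge 1$), which gives $\fint_Q|f-P_Q^{(s)}(f)|\le[\fint_Q|f-P_Q^{(s)}(f)|^q]^{1/q}$ and hence $\|f\|_{JN_{(p,1,s)_\alpha}(\cX)}\le\|f\|_{JN_{(p,q,s)_\alpha}(\cX)}$. The substance of the lemma lies in the reverse inclusion, which is a John--Nirenberg type distributional estimate. My plan is to adapt the classical iteration scheme to the John--Nirenberg--Campanato setting by combining a Calder\'on--Zygmund stopping-time decomposition with the layer-cake formula.

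Fix $f\in JN_{(p,1,s)_\alpha}(\cX)$, let $\{Q_j\}_{j\in\nn}$ be interior pairwise disjoint cubes in $\cX$, and write $g_j:=f-P_{Q_j}^{(s)}(f)$. For each $\lambda$ exceeding a fixed multiple of $\fint_{Q_j}|g_j|$, I would perform a dyadic stopping-time decomposition inside $Q_j$ to extract the maximal dyadic subcubes $\{R_{j,k}^{\lambda}\}_k$ of $Q_j$ on which $\fint_{R_{j,k}^{\lambda}}|g_j|>\lambda$. By maximality and dyadic doubling, $\fint_{R_{j,k}^{\lambda}}|g_j|\le 2^n\lambda$, and by Lebesgue differentiation $|g_j|\le\lambda$ almost everywhere on $Q_j\setminus\bigcup_k R_{j,k}^{\lambda}$. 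The crucial step is to pass from a comparison with $P_{Q_j}^{(s)}(f)$ to one with $P_{R_{j,k}^{\lambda}}^{(s)}(f)$: since $P_{R_{j,k}^{\lambda}}^{(s)}(g_j)=P_{R_{j,k}^{\lambda}}^{(s)}(f)-P_{Q_j}^{(s)}(f)$, estimate \eqref{eq.dxsC} forces $\sup_{R_{j,k}^{\lambda}}|P_{R_{j,k}^{\lambda}}^{(s)}(g_j)|\le C_{(s)}2^n\lambda$, and choosing the threshold multiple large enough yields
$$\fint_{R_{j,k}^{\lambda}}\lf|f-P_{R_{j,k}^{\lambda}}^{(s)}(f)\r|\gtrsim\lambda.$$
Because $\{R_{j,k}^{\lambda}\}_{j,k}$ is interior pairwise disjoint in $\cX$ (being subcubes of the disjoint $Q_j$), the hypothesis on $f$ then delivers the distributional control $\sum_{j,k}|R_{j,k}^{\lambda}|^{1-p\alpha}\lambda^p\lesssim\|f\|_{JN_{(p,1,s)_\alpha}(\cX)}^p$.

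The layer-cake formula reduces the target quantity to a $\lambda$-integral of this distributional bound: for each $j$, $\int_{Q_j}|g_j|^q=q\int_0^\infty\lambda^{q-1}|\{x\in Q_j:|g_j(x)|>\lambda\}|\,d\lambda$, where the level set is majorised by $\sum_k|R_{j,k}^{\lambda}|$ for $\lambda$ large and by $|Q_j|$ for $\lambda$ small. Raising to the $p/q$-th power, multiplying by $|Q_j|^{1-p(\alpha+1/q)}$, summing in $j$, and interchanging the sum with the $\lambda$-integral via Minkowski's integral inequality (valid because $p/q\ge 1$) yields the required estimate, with convergence of the $\lambda$-integral guaranteed precisely by the strict inequality $q<p$.

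The principal obstacle is the polynomial bookkeeping in the stopping step: the natural comparison produces $P_{Q_j}^{(s)}(f)$ rather than $P_{R_{j,k}^{\lambda}}^{(s)}(f)$, and bridging this uniformly across scales relies essentially on \eqref{eq.dxsC} together with the $L^\infty$-equivalence of polynomial $L^q$-norms on cubes in the same spirit as Lemma \ref{lem1}. A secondary technicality is arranging the exponents on $|R_{j,k}^{\lambda}|$ versus $|Q_j|$ when $\alpha>0$: the inclusion $R_{j,k}^{\lambda}\subset Q_j$ gives monotonicity in the right direction, but the argument must be set up so that this is genuinely exploited rather than lost in the $\lambda$-integration.
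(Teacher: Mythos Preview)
The paper does not give its own proof of this lemma; it is simply quoted from \cite[Proposition~1.19]{TYY}. So there is no in-paper argument to compare against, but your outline contains a genuine gap.

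The problematic step is the claim that, on each stopping cube $R=R_{j,k}^{\lambda}$, one has $\fint_{R}\lf|f-P_{R}^{(s)}(f)\r|\gtrsim\lambda$. From the stopping condition you get $\lambda<\fint_{R}|g_j|\le 2^n\lambda$, and then \eqref{eq.dxsC} gives $\sup_R|P_{R}^{(s)}(g_j)|\le C_{(s)}2^n\lambda$. But the triangle inequality only yields
\[
\fint_R\lf|g_j-P_R^{(s)}(g_j)\r|\ge\fint_R|g_j|-\sup_R\lf|P_R^{(s)}(g_j)\r|\ge\lambda-C_{(s)}2^n\lambda,
\]
which is negative since $C_{(s)}\ge1$. ``Choosing the threshold multiple large enough'' cannot repair this: both sides scale linearly in $\lambda$, so no choice of constant makes the difference positive. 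The claim is in fact false in general: if $g_j$ happens to coincide with a polynomial of degree at most $s$ on $R$ (for instance a constant $c\in(\lambda,2^n\lambda]$ when $s=0$), then $f-P_R^{(s)}(f)=g_j-P_R^{(s)}(g_j)=0$ on $R$, while $\fint_R|g_j|>\lambda$.

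What is missing is \emph{iteration}. The argument in \cite{TYY} (following the classical John--Nirenberg scheme) stops once inside each $Q_j$, but then \emph{re-centers} on every selected cube $R$: one replaces $g_j$ by $f-P_R^{(s)}(f)$ and repeats the stopping inside $R$ at a fixed multiplicative increment of the threshold. Summing over all cubes of a given generation and invoking the $JN_{(p,1,s)_\alpha}$ norm at that stage produces a recursion showing that the $\ell^p$-weighted measure of the generation-$m$ family decays geometrically in $m$; it is this decay that feeds the layer-cake estimate. A single stopping at level $\lambda$, with oscillation still measured against $P_{Q_j}^{(s)}(f)$, cannot access the $JN$ norm on the stopped cubes and therefore cannot produce the required $\lambda^{-p}$ distributional bound.
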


\begin{proof}[Proof of Proposition \ref{prop.jneq1}]
Let $1\le q<p<\fz$, $s\in\zz_{+}$, $\az\in[0,\fz)$ and $c_0\in (0,\ell(\cX))$.
The continuous embedding $jn_{(p,q,s)_{\az}}(\mathcal{X})\subset jn_{(p,1,s)_{\az}}(\mathcal{X})$
follows immediately from the H\"{o}lder inequality.
Thus, we only need to prove $jn_{(p,1,s)_{\az}}(\mathcal{X})\subset jn_{(p,q,s)_{\az}}(\mathcal{X})$.
By Lemma \ref{b1}, we know that $JN_{(p,q,s)_{\az}}(\mathcal{X})=JN_{(p,1,s)_{\az}}(\mathcal{X})$ with equivalent norms.
Combining this and Proposition \ref{rem.jnandJN}\rm{(i)}, we find that,
for any $f\in jn_{(p,1,s)_{\az}}(\mathcal{X})$,
\begin{align}\label{j1}
\|f\|_{JN_{(p,q,s)_{\az}}(\mathcal{X})}\ls\|f\|_{JN_{(p,1,s)_{\az}}(\cX)}\ls\|f\|_{jn_{(p,1,s)_{\az}}(\mathcal{X})}.
\end{align}
Let $\{Q_j\}_{j\in\nn}$ be interior pairwise disjoint cubes in $\mathcal{X} $ and $J:=\{ j\in \nn:\,\,\ell(Q_j)\ge c_0\}$.
From the Minkowski inequality, \eqref{eq.dxsC} and \eqref{j1},
we deduce that, for any $f\in jn_{(p,1,s)_{\az}}(\mathcal{X})$,
\begin{align*}
  &\lf(\sum_{j\in\nn}\lf|Q_j\r|\lf\{\lf|Q_j\r|^{-\az}
     \lf[\fint_{Q_j}\lf|f-P^{(s)}_{Q_j,c_0}(f)\r|^q\r]^{\frac{1}{q}}\r\}^p\r)^{\frac{1}{p}}\\
  &\quad=\lf(\sum_{j\in \nn\setminus J}\lf|Q_j\r|\lf\{\lf|Q_j\r|^{-\az}
     \lf[\fint_{Q_j}\lf|f-P^{(s)}_{Q_j}(f)\r|^q\r]^{\frac{1}{q}}\r\}^p\r.\\
  &\quad\quad\lf.+\sum_{j\in J}\lf|Q_j\r|\lf\{\lf|Q_j\r|^{-\az}
     \lf[\fint_{Q_j}\lf|f-P^{(s)}_{Q_j}(f)+P^{(s)}_{Q_j}(f)\r|^q\r]^{\frac{1}{q}}\r\}^p\r)^{\frac{1}{p}}\\
  &\quad\le\lf(\sum_{j\in\nn}\lf|Q_j\r|\lf\{\lf|Q_j\r|^{-\az}
     \lf[\fint_{Q_j}\lf|f-P^{(s)}_{Q_j}(f)\r|^q\r]^{\frac{1}{q}}\r\}^p\r)^{\frac{1}{p}}+
     \lf(\sum_{j\in J}\lf|Q_j\r|\lf\{\lf|Q_j\r|^{-\az}
     \lf[\fint_{Q_j}\lf|P^{(s)}_{Q_j}(f)\r|^q\r]^{\frac{1}{q}}\r\}^p
     \r)^{\frac{1}{p}}\\
  &\quad\ls\|f\|_{JN_{(p,q,s)_{\az}}(\mathcal{X})}+
     \lf[\sum_{j\in J}\lf|Q_j\r|\lf(\lf|Q_j\r|^{-\az}\fint_{Q_j}\lf|f\r|\r)^p\r]^{\frac{1}{p}}\\
  &\quad\ls\|f\|_{JN_{(p,q,s)_{\az}}(\mathcal{X})}+
    \|f\|_{jn_{(p,1,s)_{\az}}(\mathcal{X})}\ls\|f\|_{jn_{(p,1,s)_{\az}}(\mathcal{X})},
\end{align*}
which further implies that $f\in jn_{(p,q,s)_{\az}}(\mathcal{X})$ and
$\|f\|_{jn_{(p,q,s)_{\az}}(\mathcal{X})}\ls\|f\|_{jn_{(p,1,s)_{\az}}(\mathcal{X})}$.
Thus, $jn_{(p,1,s)_{\az}}(\mathcal{X})\subset jn_{(p,q,s)_{\az}}(\mathcal{X})$,
which completes the proof of Proposition \ref{prop.jneq1}.
\end{proof}

\begin{remark}\label{rem1}
Let $s\in\zz_{+}$, $\az\in[0,\fz)$ and $Q_0\subsetneqq\rn$ be a cube.
\begin{itemize}
\item[\rm{(i)}]
If $1<p_1<p_2<\fz$ and $q\in [1,\fz)$, then, from the H\"{o}lder inequality, it follows that $jn_{(p_2,q,s)_{\az}}(Q_0)\subset jn_{(p_1,q,s)_{\az}}(Q_0)$.
\item[\rm{(ii)}]
Recall that the generalized John--Nirenberg inequality \cite[Theorem 1.21]{TYY} states that, for any $p\in(0,\fz)$
and $f\in JN_{(p,1,s)_{\az}}(Q_0)$, there exists a positive constant $C$, depending only on $n,$ $p$ and $s$, such that
$$\sup_{\lambda\in(0,\fz)}\lambda\lf|\lf\{x\in Q_0:\ \lf|f(x)-P_{Q_0}^s(f)(x)\r|>\lambda\r\}\r|^{\frac1p}
\le C \lf|Q_0\r|^{\alpha}\|f\|_{JN_{(p,1,s)_{\az}}(Q_0)}.$$
Using this and Proposition \ref{rem.jnandJN}(i), we conclude that the above
John--Nirenberg inequality remains valid when $JN_{(p,1,s)_{\az}}(Q_0)$ 
is replaced by $jn_{(p,1,s)_{\az}}(Q_0)$.
\end{itemize}
\end{remark}

Now, we discuss the relationship between $jn_{(p,q,s)_{\az}}(\mathcal{X})$ and the Lebesgue space.
In what follows, for any given nonnegative constant $\lz$ and normed space $(\mathbb{X},\|\cdot\|_\mathbb{X})$,
the \emph{new normed space}
$(\lz\mathbb{X},\|\cdot\|_{\lz\mathbb{X}})$ is defined by setting
$\lz\mathbb{X}:=\mathbb{X}$ and $\|\cdot\|_{\lz\mathbb{X}}:=\lz\|\cdot\|_{\mathbb{X}}$.

\begin{proposition}
\label{prop.jneq2}
Let $s\in\zz_{+}$ and $Q_0\subsetneqq\rn$ be a cube.
\begin{itemize}
\item[\rm{(i)}]
If $1<p\le q<\fz$, then
 $|Q_0|^{\frac{1}{q}-\frac{1}{p}}jn_{(p,q,s)_{0}}(Q_0)=L^q(Q_0)$
 with equivalent norms.
\item[\rm{(ii)}]
 If $p\in(1,\fz)$, then $jn_{(p,p,s)_{0}}(\rn)=L^p(\rn)$ with equivalent norms.
\item[\rm{(iii)}]
 If $1<p<q<\fz$, $\az\in[0,\frac{1}{p}-\frac{1}{q})$ and $f\in jn_{(p,q,s)_{\az}}(\rn)$, then $f=0$ almost everywhere.
 \end{itemize}
\end{proposition}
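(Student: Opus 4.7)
The plan is to prove the three parts separately, each by direct manipulation of the norm in Definition \ref{def.jnpqs}. The common tools are the pointwise bound \eqref{eq.dxsC}, Jensen's and H\"older's inequalities, and judicious choices of test cubes. A key observation used throughout (i) and (iii) is that, for any cube $Q$ with $\ell(Q)\ge c_0$, $P^{(s)}_{Q,c_0}(f)=0$, which removes the polynomial term from the $jn$-norm and reduces it to a pure $L^q$ quantity.

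For part (i), I would establish the two-sided equivalence $\|f\|_{jn_{(p,q,s)_0}(Q_0)} \sim |Q_0|^{1/p - 1/q}\|f\|_{L^q(Q_0)}$. The lower bound follows by testing with the singleton collection $\{Q_0\}$: since $c_0 < \ell(Q_0)$, $P^{(s)}_{Q_0,c_0}(f)=0$, and the supremum in Definition \ref{def.jnpqs} directly yields $|Q_0|^{1/p - 1/q}\|f\|_{L^q(Q_0)}$. For the upper bound, for an arbitrary interior-disjoint family $\{Q_j\}_{j\in\nn}$ in $Q_0$, the pointwise bound \eqref{eq.dxsC} and Jensen's inequality give $\fint_{Q_j}|f - P^{(s)}_{Q_j,c_0}(f)|^q \ls \fint_{Q_j}|f|^q$. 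The resulting sum $\sum_j |Q_j|^{1 - p/q}(\int_{Q_j}|f|^q)^{p/q}$ is controlled by H\"older's inequality with conjugate exponents $q/(q-p)$ and $q/p$ (valid since $p\le q$), using $\sum_j |Q_j| \le |Q_0|$ and $\sum_j \int_{Q_j}|f|^q \le \|f\|^q_{L^q(Q_0)}$.

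Part (ii) is analogous but cleaner, being the case $p=q$ on $\rn$. The inequality $\|f\|_{jn_{(p,p,s)_0}(\rn)}\ls \|f\|_{L^p(\rn)}$ follows from \eqref{eq.dxsC}, Jensen, and the additive estimate $\sum_j \int_{Q_j}|f|^p \le \|f\|^p_{L^p(\rn)}$, with no H\"older step required. The reverse inequality is obtained by testing with a fixed tiling of $\rn$ by interior-disjoint cubes $\{R_i\}_{i\in\nn}$ of side length exactly $c_0$; since $\ell(R_i)\ge c_0$, all polynomials $P^{(s)}_{R_i,c_0}(f)$ vanish, and the resulting value of the supremum is exactly $\|f\|_{L^p(\rn)}$.

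For part (iii), the strategy is to test $\|f\|_{jn_{(p,q,s)_\alpha}(\rn)}$ on the single cube $Q_R:=[-R,R]^n$ for large $R$. Since $\ell(Q_R)=2R>c_0$ for $R$ large, $P^{(s)}_{Q_R,c_0}(f)=0$, yielding
$$\|f\|_{jn_{(p,q,s)_\alpha}(\rn)} \ge |Q_R|^{1/p - \alpha - 1/q}\lf\|f\mathbf{1}_{Q_R}\r\|_{L^q(\rn)}.$$
The hypothesis $\alpha<1/p-1/q$ makes the exponent of $|Q_R|$ strictly positive. If $f$ were non-zero on a set of positive measure, then $f\in L^q_{\loc}(\rn)$ together with monotone convergence would give $\|f\mathbf{1}_{Q_R}\|_{L^q(\rn)}$ bounded below by a positive constant for all sufficiently large $R$, forcing the right-hand side to $\fz$ and contradicting $\|f\|_{jn_{(p,q,s)_\alpha}(\rn)}<\fz$. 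The only mildly delicate point is the direction of H\"older in (i), where $p/q\le 1$ dictates that the exponents be taken in the ``non-standard'' orientation, but this is routine bookkeeping rather than a genuine obstacle.
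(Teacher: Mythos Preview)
Your proposal is correct and follows essentially the same approach as the paper's proof: both establish (i) and (ii) via the pointwise bound \eqref{eq.dxsC} combined with H\"older (with exponents $q/p$ and $q/(q-p)$ in (i)) for the upper estimate on the $jn$-norm, and by testing on $Q_0$ or a tiling by large cubes for the lower estimate; part (iii) is handled identically by testing on $[-N,N]^n$. The only cosmetic difference is that in (iii) the paper rearranges the inequality to show directly that $\|f\|_{L^q(\rn)}=0$ rather than arguing by contradiction, but this is the same computation.
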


\begin{proof}
We first show \rm{(i)}. Let $1<p\le q<\fz$.
For any $f\in{jn_{(p,q,s)_{0}}(Q_0)}$, by Definition \ref{def.jnpqs}, we have
$\|f\|_{L^q(Q_0)}\le|Q_0|^{\frac{1}{q}-\frac{1}{p}}\| f\|_{jn_{(p,q,s)_{0}}(Q_0)}$.
Thus, we obtain $$|Q_0|^{\frac{1}{q}-\frac{1}{p}}jn_{(p,q,s)_{0}}(Q_0)\subset L^q(Q_0).$$
Now, we show $L^q(Q_0)\subset |Q_0|^{\frac{1}{q}-\frac{1}{p}}jn_{(p,q,s)_{0}}(Q_0)$.
Let $f\in L^q(Q_0)$ and $\{Q_j\}_{j\in\nn}$ be interior pairwise disjoint cubes in $Q_0$.
By the Minkowski inequality, \eqref{eq.dxsC}, the H\"{o}lder inequality and $\frac{p}{q}\le 1$, we conclude that
\begin{align*}
  \sum_{j\in\nn}\lf|Q_j\r|\lf[\fint_{Q_j}\lf|f-P^{(s)}_{Q_j,c_0}(f)\r|^q\r]^{\frac{p}{q}}
  &\le\sum_{j\in\nn}\lf|Q_j\r|\lf\{\lf(\fint_{Q_j}|f|^q\r)^{\frac{1}{q}}+\lf[\fint_{Q_j}
     \lf|P^{(s)}_{Q_j,c_0}(f)\r|^q\r]^{\frac{1}{q}}\r\}^p\\
  &\ls\sum_{j\in\nn}\lf|Q_j\r|\lf(\fint_{Q_j}|f|^q\r)^{\frac{p}{q}}
  \ls \lf(\sum_{j\in\nn}\lf|Q_j\r|\r)^{1-\frac{p}{q}}
     \lf[\sum_{j\in\nn}\lf|Q_j\r|\lf(\fint_{Q_j}|f|^q\r)^{\frac{p}{q}\cdot\frac{q}{p}}\r]^{\frac{p}{q}}\\
  &\ls\lf|Q_0\r|^{1-\frac{p}{q}}\lf(\int_{Q_0}|f|^q\r)^{\frac{p}{q}},
\end{align*}
which, combined with the arbitrariness of $\{Q_j\}_{j\in\nn}$, implies that
$f\in|Q_0|^{\frac{1}{q}-\frac{1}{p}}\|f\|_{jn_{(p,q,s)_0}(Q_0)}$ and
$|Q_0|^{\frac{1}{q}-\frac{1}{p}}\|f\|_{jn_{(p,q,s)_0}(Q_0)}\ls\|f\|_{L^q(Q_0)}.$
Thus, $L^q(Q_0)\subset |Q_0|^{\frac{1}{q}-\frac{1}{p}}jn_{(p,q,s)_{0}}(Q_0)$,
which completes the proof of \rm{(i)}.

Next, we prove \rm{(ii)}.
Let $p\in(1,\fz)$.
Choose interior pairwise disjoint cubes $\{R_i\}_{i\in\nn}$ such that $\ell(R_i)\ge c_0$ and $\bigcup_{i\in\nn}R_i=\rn$.
For any $f\in jn_{(p,p,s)_0}(\rn)$, it is clear that
$$\| f\|_{L^p(\rn)}=\lf(\sum_{i\in\nn}\lf|R_i\r|\fint_{R_i}|f|^p\r)^{\frac{1}{p}}
=\lf[\sum_{i\in\nn}\lf|R_i\r|\fint_{R_i}\lf|f-P^{(s)}_{R_i,c_0}(f)\r|^p\r]
^{\frac{1}{p}}\le\| f\|_{jn_{(p,p,s)_0}(\rn)}.$$
Thus, we have $f\in L^p(\rn)$ and $jn_{(p,p,s)_0}(\rn)\subset L^p(\rn)$.
For the converse, let $\{Q_j\}_{j\in\nn}$ be interior pairwise disjoint cubes in $\rn$.
By the Minkowski inequality, \eqref{eq.dxsC} and the H\"{o}lder inequality, we have
\begin{align*}
  \lf[\sum_{j\in\nn}\lf|Q_j\r|\fint_{Q_j}\lf|f-P^{(s)}_{Q_j,c_0}(f)\r|^p\r]^{\frac{1}{p}}
  &\le\lf(\sum_{j\in\nn}\lf|Q_j\r|\fint_{Q_j}|f|^p\r)^{\frac{1}{p}}
     +\lf[\sum_{j\in\nn}\lf|Q_j\r|\lf|P^{(s)}_{Q_j,c_0}(f)\r|^p\r]^{\frac{1}{p}}\\
  &\ls\| f\|_{L^p(\rn)}+\lf(\sum_{j\in\nn}|Q_j|\fint_{Q_j}|f|^p\r)^{\frac{1}{p}}
  \ls\| f\|_{L^p(\rn)}.
\end{align*}
Combining this and using the arbitrariness of $\{Q_j\}_{j\in\nn}$, we obtain $f\in jn_{(p,p,s)_0}(\rn)$
and $L^p(\rn)\subset jn_{(p,p,s)_0}(\rn)$.
Thus, $jn_{(p,p,s)_0}(\rn)=L^p(\rn)$ with equivalent norms. This proves \rm{(ii)}.

Finally, we show \rm{(iii)}.
For any $N\in[c_0,\fz)$, let $Q_N:=[-N,N]^n$.
For any $ f\in jn_{(p,q,s)_{\az}}(\rn)$, by Definition \ref{def.jnpqs}, we have
$$\lf|Q_N\r|^{1-p\az}\lf(\fint_{Q_N}|f|^q\r)^{\frac{p}{q}}\le\| f\|^p_{jn_{(p,q,s)_{\az}}(\rn)}.$$
From this and $\az+\frac{1}{q}-\frac{1}{p}<0$, it follows that
$$\|f\|_{L^q(\rn)}=\lim_{N\to\fz}\lf(\int_{Q_N}\lf|f\r|^q\r)^{\frac{1}{q}}\le
\|f\|_{jn_{(p,q,s)_{\az}}(\rn)}\lim_{N\to\fz}\lf|Q_N\r|^{\az+\frac{1}{q}-\frac{1}{p}}=0.$$
Thus, we have $f=0$ almost everywhere. This finishes the proof of \rm{(iii)} and hence of Proposition \ref{prop.jneq2}.
\end{proof}

\begin{remark}
If $1<p\le q<\fz$, $s\in\zz_{+}$ and
$\az\in(0,\fz)\cap[\frac{1}{p}-\frac{1}{q},\fz)$, the relation between
$jn_{(p,q,s)_{\az}}(\rn)$ and $L^q(\rn)$ is still unknown.
\end{remark}

\section{Localized Hardy-kind spaces and duality\label{sec.lhk}}
In this section, using the local atom,
we introduce the localized Hardy-kind space and show that this space is
the predual of the localized John--Nirenberg--Campanato space.

\begin{definition}
\label{def.atom}
Let $v\in[1,\fz)$, $w\in(1,\fz]$, $s\in \zz_{+}$ and $\az\in[0,\fz)$. Fix $c_0\in(0,\ell(\mathcal{X}))$ and
let $Q$ denote a cube in $\rn$. Then a function
$a$ on $\rn$ is called a \emph{local $(v,w,s)_{\az,c_0}$-atom} supported in $Q$ if
\begin{itemize}
\item[{\rm(i)}]
$\supp(a):=\{x\in\rn:\,\,a(x)\neq 0\}\subset Q$;
\item[{\rm(ii)}]
$\| a\|_{L^w(Q)}\le |Q|^{\frac{1}{w}-\frac{1}{v}-\az}$;
\item[{\rm(iii)}]
when $\ell(Q)<c_0$,
$\int_Qa(x)x^{\beta}dx=0$ for any $\beta\in\zz_{+}^n$ and $|\beta|\le s$.
\end{itemize}
\end{definition}

Let $p\in(1,\fz)$ and $Q_0\subsetneqq\rn$ be a cube.
Dafni et al. \cite{DHKY} introduced the Hardy-kind space $HK_{p'}(Q_0)$ and proved in \cite[Theorem 6.6]{DHKY} that
$HK_{p'}(Q_0)$ is the predual space of $JN_p(Q_0)$. Here the symbol $HK$ might mean Hardy-kind.
Later, Tao et al. \cite{TYY} introduced the generalized Hardy-kind space, which is the predual space of
the John--Nirenberg--Campanato space. Motivated by this, we introduce the localized Hardy-kind space.
To this end, we first introduce a new polymer.
In what follows, the \emph{symbol} $(jn_{(p,q,s)_{\az,c_0}}(\cX))^{\ast}$ denotes the
dual space of $jn_{(p,q,s)_{\az,c_0}}(\cX)$ equipped with the weak-${\ast}$ topology.

\begin{definition}
\label{def.poly}
Let $v\in(1,\fz)$, $w\in(1,\fz]$, $s\in \zz_{+}$, $\az\in[0,\fz)$ and $c_0\in (0,\ell(\cX))$. The \emph{space}
$\widetilde{hk}_{(v,w,s)_{\az,c_0}}(\mathcal{X})$
is defined to be the set of all $g\in (jn_{(v',w',s)_{\az,c_0}}(\cX))^{\ast}$ such that
$$g=\sum_{j\in\nn}\lz_ja_j \quad\mathrm{in}\,\,(jn_{(v',w',s)_{\az,c_0}}(\cX))^{\ast},$$
where $1/v+1/v'=1=1/w+1/w'$,
$\{a_j\}_{j\in\nn}$ are local $(v,w,s)_{\az,c_0}$-atoms supported, respectively,
in interior pairwise disjoint subcubes $\{Q_j\}_{j\in\nn}$ of $\mathcal{X}$,
$\{\lambda_j\}_{j\in\nn}\subset\mathbb{C}$ and
$\sum_{j\in\nn}|\lz_j|^v<\fz$.
Any $g\in\widetilde{hk}_{(v,w,s)_{\az,c_0}}(\mathcal{X})$
is called a \emph{local} $(v,w,s)_{\az,c_0}$\emph{-polymer} on $\mathcal{X}$ and let
$$\| g\|_{\widetilde{hk}_{(v,w,s)_{\az,c_0}}(\mathcal{X})}:=\inf\lf(\sum_{j\in\nn}|\lz_j|^v\r)^{\frac{1}{v}},$$
where the infimum is taken over all such decompositions of $g$ as above.
\end{definition}

\begin{remark}
\label{rem.poly}
For any given $v$, $w$, $s$, $\az$ and $c_0$ as in Definition \ref{def.poly},
let $\{a_j\}_{j\in\nn}$ be local $(v,w,s)_{\az,c_0}$-atoms supported, respectively, in interior pairwise disjoint subcubes
$\{Q_j\}_{j\in\nn}$ of $\mathcal{X}$,
$\{\lambda_j\}_{j\in\nn}\subset\mathbb{C}$ and
$\sum_{j\in\nn}|\lz_j|^v<\fz$.
We claim that $\sum_{j\in\nn}\lz_ja_j$ converges in $(jn_{(v',w',s)_{\az,c_0}}(\cX))^{\ast}$,
where $1/v+1/v'=1=1/w+1/w'$.
Indeed, for any given $f\in jn_{(v',w',s)_{\az,c_0}}(\cX)$ and any $l\in\nn$, $m\in\zz_{+}$,
by Definition \ref{def.atom}(iii) and the H\"{o}lder inequality, we have
\begin{align}\label{eq.poly}
  \sum^{l+m}_{j=l}\lf|\int_{Q_j}\lz_ja_jf\r|
  &\le\sum^{l+m}_{j=l}\int_{Q_j}\lf|\lz_j a_j\r|\lf|f-P^{(s)}_{Q_j,c_0}(f)\r|\\
  &\le\sum^{l+m}_{j=l}\lf|Q_j\r|\lf[\lf|Q_j\r|^{\az}\lf(\fint_{Q_j}\lf|\lz_j
     a_j\r|^w\r)^{\frac{1}{w}}\r]\lf\{\lf|Q_j\r|^{-\az}
     \lf[\fint_{Q_j}\lf|f-P^{(s)}_{Q_j,c_0}(f)\r|^{w'}\r]^{\frac{1}{w'}}\r\}\notag\\
  &\le\lf[\sum^{l+m}_{j=l}\lf|Q_j\r|^{1+v\az}\lf(\fint_{Q_j}\lf|\lz_j
     a_j\r|^w\r)^{\frac{v}{w}}\r]^{\frac{1}{v}}\noz\\
   &\quad\times  \lf\{\sum^{l+m}_{j=l}\lf|Q_j\r|^{1-v'\az}
     \lf[\fint_{Q_j}\lf|f-P^{(s)}_{Q_j,c_0}(f)\r|^{w'}\r]^{\frac{v'}{w'}}\r\}^{\frac{1}{v'}}\notag\\
  &\le\lf(\sum^{l+m}_{j=l}\lf|\lz_j\r|^v\r)^{\frac{1}{v}}\|f\|_{jn_{(v',w',s)_{\az,c_0}}(\cX)}.\notag
\end{align}
From this and $\sum_{j\in\nn}|\lz_j|^v<\fz$,
it follows that the claim holds true.
By the same argument as used in the estimation of \eqref{eq.poly}, we also obtain
\begin{align}\label{r1}
\sum_{j\in\nn}\lf|\int_{Q_j}\lz_ja_jf\r|\le
\lf(\sum_{j\in\nn}\lf|\lz_j\r|^v\r)^{\frac{1}{v}}\|f\|_{jn_{(v',w',s)_{\az,c_0}}(\cX)},
\end{align}
which, together with Definition \ref{def.poly}, further implies that, for
any $g\in\widetilde{hk}_{(v,w,s)_{\az,c_0}}(\mathcal{X})$ and $f\in jn_{(v',w',s)_{\az,c_0}}(\cX)$,
$$|\langle g,f\rangle|\le \|g\|_{\widetilde{hk}_{(v,w,s)_{\az,c_0}}(\mathcal{X})}\|f\|_{jn_{(v',w',s)_{\az,c_0}}(\cX)}.$$
This means that we indeed have $g\in(jn_{(v',w',s)_{\az,c_0}}(\cX))^{\ast}$.
\end{remark}

Now, we introduce the localized Hardy-kind space.

\begin{definition}
\label{def.hkvws}
Let $v\in(1,\fz)$, $w\in(1,\fz]$, $s\in \zz_{+}$, $\az\in[0,\fz)$ and $c_0\in(0,\ell(\cX))$.
The \emph{localized Hardy-kind space $hk_{(v,w,s)_{\az,c_0}}(\cX)$} is defined to be the set of all
$g\in (jn_{(v',w',s)_{\az,c_0}}(\cX))^{\ast}$ such that
there exists a sequence $\{g_i\}_{i\in\nn}\subset \widetilde{hk}_{(v,w,s)_{\az,c_0}}(\mathcal{X})$ such that
$\sum_{i\in\nn}\|g_i\|_{\widetilde{hk}_{(v,w,s)_{\az,c_0}}(\mathcal{X})}<\fz$ and
\begin{align}\label{hc}
g=\sum_{i\in\nn}g_i \qquad{\rm in}\ \ (jn_{(v',w',s)_{\az,c_0}}(\cX))^{\ast}.
\end{align}
For any $g\in{hk_{(v,w,s)_{\az,c_0}}(\mathcal{X})}$, let
$$\|g\|_{hk_{(v,w,s)_{\az,c_0}}(\mathcal{X})}:=\inf \sum_{i\in\nn}\|g_i\|_{\widetilde{hk}_{(v,w,s)_{\az,c_0}}(\mathcal{X})},$$
where the infimum is taken over all decompositions of $g$ as in \eqref{hc}.
\end{definition}

\begin{remark}\label{rem.h}
For any given $v$, $w$, $s$, $\az$ and $c_0$ as in Definition \ref{def.hkvws},
let $g\in {hk_{(v,w,s)_{\az,c_0}}(\mathcal{X})}$ and $\{g_i\}_{i\in\nn}\subset {hk_{(v,w,s)_{\az,c_0}}(\mathcal{X})}$.
If $g=\sum_{i\in\nn}g_i$ in $(jn_{(v',w',s)_{\az,c_0}}(\cX))^{\ast}$,
we then claim that
$$\|g\|_{hk_{(v,w,s)_{\az,c_0}}(\mathcal{X})}\le\sum_{i\in\nn}\|g_i\|_{hk_{(v,w,s)_{\az,c_0}}(\mathcal{X})}.$$
Indeed, by Definition \ref{def.hkvws}, we know that, for any $\epsilon\in(0,\fz)$ and $i\in\nn$,
there exists a sequence $\{g_{i,j}\}_{j\in\nn}\subset \widetilde{hk}_{(v,w,s)_{\az,c_0}}(\mathcal{X})$
such that
$\sum_{j\in\nn}\|g_{i,j}\|_{\widetilde{hk}_{(v,w,s)_{\az,c_0}}(\mathcal{X})}
\le\|g_i\|_{hk_{(v,w,s)_{\az,c_0}}(\mathcal{X})}+2^{-i}\epsilon$ and
$g_i=\sum_{j\in\nn}g_{i,j}$ in $(jn_{(v',w',s)_{\az,c_0}}(\cX))^{\ast}$.
From this and $g=\sum_{i\in\nn}g_i=\sum_{i\in\nn}\sum_{j\in\nn}g_{i,j}$
in $(jn_{(v',w',s)_{\az,c_0}}(\cX))^{\ast}$, we deduce that
$$
\lf\|g\r\|_{hk_{(v,w,s)_{\az,c_0}}(\mathcal{X})}
\le\sum_{i\in\nn}\sum_{j\in\nn}\lf\|g_{i,j}\r\|_{\widetilde{hk}_{(v,w,s)_{\az,c_0}}(\mathcal{X})}
\le\sum_{i\in\nn}\lf\|g_i\r\|_{hk_{(v,w,s)_{\az,c_0}}(\mathcal{X})}+\epsilon,
$$
which, combined with the arbitrariness of $\epsilon$, implies that the above claim holds true.
\end{remark}

\begin{remark}\label{rem.hkvws}
Let $v$, $w$, $s$, $\az$ and $c_0$ be as in Definition \ref{def.hkvws}.
If $\{g_i\}_{i\in\nn}\subset \widetilde{hk}_{(v,w,s)_{\az,c_0}}(\mathcal{X})$ and
$\sum_{i\in\nn}\|g_i\|_{\widetilde{hk}_{(v,w,s)_{\az,c_0}}(\mathcal{X})}<\fz$,
we then claim that $\sum_{i\in\nn}g_i$ convergences in $(jn_{(v',w',s)_{\az,c_0}}(\cX))^{\ast}$.
Indeed, by Remark \ref{rem.poly}, we have,
for any given $f\in jn_{(v',w',s)_{\az,c_0}}(\cX)$ and any $l\in\nn$, $m\in\zz_{+}$,
$$\lf|\lf\langle \sum^{l+m}_{i=l}g_i,f\r\rangle\r|\le\sum^{l+m}_{i=l}\lf|\lf\langle g_i,f\r\rangle\r|\le
\sum^{l+m}_{i=l}\|g_i\|_{\widetilde{hk}_{(v,w,s)_{\az,c_0}}(\mathcal{X})}\|f\|_{jn_{(v',w',s)_{\az,c_0}}(\cX)}.$$
By this and $\sum_{i\in\nn}\|g_i\|_{\widetilde{hk}_{(v,w,s)_{\az,c_0}}(\mathcal{X})}<\fz$,
we conclude that the above claim holds true.
Clearly, if letting $g:=\sum_{i\in\nn}g_i$ in $(jn_{(v',w',s)_{\az,c_0}}(\cX))^{\ast}$, then
\begin{align*}
\lf|\lf\langle g,f\r\rangle\r|&=\lf|\lim_{m\to\fz}\lf\langle \sum_{i=1}^mg_i,f\r\rangle\r|
=\lf|\lim_{m\to\fz}\sum_{i=1}^m\lf\langle g_i,f\r\rangle\r|\\
&\le\sum_{i\in\nn}\lf|\lf\langle g_i,f\r\rangle\r|\le
\sum_{i\in\nn}\|g_i\|_{\widetilde{hk}_{(v,w,s)_{\az,c_0}}(\mathcal{X})}\|f\|_{jn_{(v',w',s)_{\az,c_0}}(\cX)}.
\end{align*}
From this and Definition \ref{def.hkvws}, it follows that,
for any $g\in hk_{(v,w,s)_{\az,c_0}}(\cX)$,
$$\lf|\langle g,f\rangle\r|\le\|g\|_{hk_{(v,w,s)_{\az,c_0}}(\cX)}\|f\|_{jn_{(v',w',s)_{\az,c_0}}(\cX)}.$$
\end{remark}

The following proposition indicates that $hk_{(v,w,s)_{\az,c_0}}(\cX)$ is independent of the choice of
the positive constant $c_0$.

\begin{proposition}
\label{prop.hkc}
Let $v\in(1,\fz)$, $w\in(1,\fz]$, $s\in \zz_{+}$, $\az\in[0,\fz)$ and $0<c_1<c_2<\ell(\cX)$.
Then $hk_{(v,w,s)_{\az,c_1}}(\cX)=hk_{(v,w,s)_{\az,c_2}}(\cX)$ with equivalent norms.
\end{proposition}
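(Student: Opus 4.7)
First, by Proposition \ref{prop.jnc}, the preduals $jn_{(v',w',s)_{\az,c_1}}(\cX)$ and $jn_{(v',w',s)_{\az,c_2}}(\cX)$ coincide with equivalent norms (here $1/v+1/v'=1=1/w+1/w'$), so their dual spaces agree as topological vector spaces. Therefore it suffices to compare the atomic subspaces $hk_{(v,w,s)_{\az,c_1}}(\cX)$ and $hk_{(v,w,s)_{\az,c_2}}(\cX)$ inside this common dual.

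The inclusion $hk_{(v,w,s)_{\az,c_2}}(\cX)\subset hk_{(v,w,s)_{\az,c_1}}(\cX)$ is immediate from Definitions \ref{def.atom}, \ref{def.poly} and \ref{def.hkvws}: since $c_1<c_2$, the cancellation requirement $\ell(Q)<c_2$ for a $(v,w,s)_{\az,c_2}$-atom is strictly stronger than the requirement $\ell(Q)<c_1$ for a $(v,w,s)_{\az,c_1}$-atom, so every $c_2$-atom is automatically a $c_1$-atom; passing through polymers with the same decomposition gives $\|g\|_{hk_{(v,w,s)_{\az,c_1}}(\cX)}\le \|g\|_{hk_{(v,w,s)_{\az,c_2}}(\cX)}$.

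For the reverse inclusion, given $g\in hk_{(v,w,s)_{\az,c_1}}(\cX)$ with a decomposition $g=\sum_{i\in\nn}g_i$, $g_i=\sum_{j\in\nn}\lambda_{i,j}a_{i,j}$ as in Definitions \ref{def.poly} and \ref{def.hkvws}, I split each index set into $G_i:=\{j:\ell(Q_{i,j})<c_1\text{ or }\ell(Q_{i,j})\ge c_2\}$ and $B_i:=\{j:c_1\le\ell(Q_{i,j})<c_2\}$, and write $g_i=g_i^{\mathrm{good}}+g_i^{\mathrm{bad}}$ accordingly. Every atom indexed by $G_i$ is already a $c_2$-atom, so $g_i^{\mathrm{good}}\in \widetilde{hk}_{(v,w,s)_{\az,c_2}}(\cX)$ with the same coefficient bound. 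For $g_i^{\mathrm{bad}}$, whose atoms may lack the cancellation required at scale $c_2$, I borrow the grid argument from the proof of Proposition \ref{prop.jnc}: tile $\cX$ with interior pairwise disjoint cubes $\{R_k\}_{k\in\nn}$ of common side length $l_1\in[c_2,2c_2)$, split $a_{i,j}=\sum_k a_{i,j}\mathbf{1}_{R_k}$ (at most $2^n$ nonzero terms per $j$ since $\ell(Q_{i,j})<c_2\le l_1$), and regroup into
\[
A_{i,k}:=\sum_{j\in T_{i,k}}\lambda_{i,j}a_{i,j}\mathbf{1}_{R_k},\qquad T_{i,k}:=\{j\in B_i:Q_{i,j}\cap R_k\neq\emptyset\},
\]
where $\#T_{i,k}\le(l_1/c_1+2)^n\le(2c_2/c_1+2)^n$. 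Setting $\Lambda_{i,k}:=\|A_{i,k}\|_{L^w(R_k)}|R_k|^{1/v+\az-1/w}$, the normalized function $A_{i,k}/\Lambda_{i,k}$ is a $(v,w,s)_{\az,c_2}$-atom on $R_k$ needing no cancellation (since $\ell(R_k)=l_1\ge c_2$); as the $R_k$ are pairwise disjoint, this assembles into a local polymer in $\widetilde{hk}_{(v,w,s)_{\az,c_2}}(\cX)$ representing $g_i^{\mathrm{bad}}$, with the identity $g_i^{\mathrm{bad}}=\sum_k\Lambda_{i,k}(A_{i,k}/\Lambda_{i,k})$ in the weak-$\ast$ sense verified by absolute convergence (via \eqref{r1}) and Fubini.

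The main obstacle is the $\ell^v$-estimate $\sum_{k\in\nn}|\Lambda_{i,k}|^v\le C\sum_{j\in B_i}|\lambda_{i,j}|^v$ with $C$ depending only on $c_1$, $c_2$, $n$, $v$, $w$, $\az$. Interior disjointness of $\{Q_{i,j}\}_{j\in T_{i,k}}$ yields
\[
\|A_{i,k}\|_{L^w(R_k)}^w\le\sum_{j\in T_{i,k}}|\lambda_{i,j}|^w|Q_{i,j}|^{1-w/v-w\az},
\]
and the factors $|Q_{i,j}|^{1-w/v-w\az}$ and $|R_k|^{1+v\az-v/w}$ are uniformly bounded above and below because $|Q_{i,j}|\in[c_1^n,c_2^n)$ and $|R_k|\in[c_2^n,(2c_2)^n)$. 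The transition from exponent $w$ to $v$ then splits into two cases: when $v\ge w$, the power-mean bound $(\sum_{j\in T_{i,k}}x_j)^{v/w}\le(\#T_{i,k})^{v/w-1}\sum_{j\in T_{i,k}}x_j^{v/w}$ combined with the uniform bound on $\#T_{i,k}$ handles it; when $v<w$, the subadditivity $(\sum_{j\in T_{i,k}}x_j)^{v/w}\le\sum_{j\in T_{i,k}}x_j^{v/w}$ suffices. A final factor of $2^n$ appears because each $j\in B_i$ belongs to at most $2^n$ of the sets $T_{i,k}$. Combining the good- and bad-part bounds via Remark \ref{rem.h}, summing over $i$ and taking the infimum over decompositions finally yields $\|g\|_{hk_{(v,w,s)_{\az,c_2}}(\cX)}\lesssim\|g\|_{hk_{(v,w,s)_{\az,c_1}}(\cX)}$, completing the proof.
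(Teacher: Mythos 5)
Your proof is correct and follows essentially the same strategy as the paper: the easy inclusion comes from every local $(v,w,s)_{\az,c_2}$-atom being a local $(v,w,s)_{\az,c_1}$-atom, and the hard inclusion from tiling $\cX$ by cubes of side length $l_1\in[c_2,2c_2)$, on which no cancellation is required, and absorbing the intermediate-scale atoms into atoms on those tiles. The only structural difference is the regrouping: you merge all bad atoms meeting a tile $R_k$ into a single normalized atom supported in $R_k$, which forces the extra $\ell^w$-to-$\ell^v$ coefficient estimate with the case split $v\ge w$ versus $v<w$, whereas the paper keeps the restricted atoms separate and distributes them into $K\le\lfloor(2c_2/c_1+2)^n\rfloor$ polymers with at most one atom per tile each, so that its coefficient bound follows directly from the bounded overlap (each $Q_j$ meets at most $2^n$ tiles).
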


\begin{proof}
Let $v$, $w$, $s$, $\az$, $c_1$ and $c_2$ be as in this proposition.
Clearly, any local $(v,w,s)_{\az,c_2}$-atom is also a local $(v,w,s)_{\az,c_1}$-atom.
By this and Proposition \ref{prop.jnc}, we know that, for any $G\in hk_{(v,w,s)_{\az,c_2}}(\cX)$,
$$\|G\|_{hk_{(v,w,s)_{\az,c_1}}(\mathcal{X})}\le\|G\|_{hk_{(v,w,s)_{\az,c_2}}(\mathcal{X})}.$$
Thus, we have $G\in hk_{(v,w,s)_{\az,c_1}}(\cX)$ and hence $hk_{(v,w,s)_{\az,c_2}}(\cX)\subset hk_{(v,w,s)_{\az,c_1}}(\cX)$.

Next, we prove $hk_{(v,w,s)_{\az,c_1}}(\cX)\subset hk_{(v,w,s)_{\az,c_2}}(\cX)$.
For any $g\in \widetilde{hk}_{(v,w,s)_{\az,c_1}}(\cX)$, by Definition \ref{def.poly}, we know that
there exist a sequence $\{a_j\}_{j\in\nn}$ of local $(v,w,s)_{\az,c_1}$-atoms supported,
respectively, in interior pairwise disjoint cubes $\{Q_j\}_{j\in\nn}$
and $\{\lz_j\}_{j\in\nn}\subset \mathbb{C}$ such that
$(\sum_{j\in\nn}|\lz_j|^v)^{\frac{1}{v}}\le 2\|g\|_{\widetilde{hk}_{(v,w,s)_{\az,c_1}}(\cX)}$
and $g:=\sum_{j\in\nn}\lz_ja_j$ in $(jn_{(v',w',s)_{\az,c_1}}(\cX))^{\ast}$.
Let $J:=\{j\in\nn:\,\, c_1\le\ell(Q_j)<c_2\}$.
Observe that, for any $j\in\nn\setminus J$, $a_j$ is a local $(v,w,s)_{\az,c_2}$-atom.
By Remark \ref{rem.poly}, we know that $\sum_{j\in\nn\setminus J}\lz_ja_j$ converges in $(jn_{(v',w',s)_{\az,c_2}}(\cX))^{\ast}$.
Let $g_0:=\sum_{j\in\nn\setminus J}\lz_ja_j$ in $(jn_{(v',w',s)_{\az,c_2}}(\cX))^{\ast}$. Then
\begin{align}\label{eq.hkc1}
\|g_0\|_{{\widetilde{hk}_{(v,w,s)_{\az,c_2}}(\cX)}}
\le \lf(\sum_{j\in \nn\setminus J}\lf|\lz_j\r|^v\r)^{\frac{1}{v}}
\le 2\|g\|_{\widetilde{hk}_{(v,w,s)_{\az,c_1}}(\cX)}.
\end{align}
If $\cX=\rn$, let $l_1:=c_2$ and if $\cX\subsetneqq\rn$ is a cube,
let $l_1:=\ell(\cX)(\lfloor\frac{\ell(\mathcal{X})}{c_2}\rfloor)^{-1}$.
It is clear that $l_1\in[c_2,2c_2)$.
Choose interior pairwise disjoint cubes $\{R_i\}_{i\in\nn}$ such that $\ell(R_i)=l_1$ and $\mathcal{X}=\bigcup_{i\in\nn}R_i$.
For any $i\in\nn$, let $\mathcal{Q}_i:=\{Q_j:\,\, j\in\ J,\,\, Q_j\cap R_i\neq\emptyset\}$.
Then $$M_i:=\#\cQ_i\le \lf\lfloor\lf(\frac{l_1}{c_1}+2\r)^n\r\rfloor=:K.$$
Rewrite $\mathcal{Q}_i$ as $\{Q_{i,k}\}^{M_i}_{k=1}$ and
let $Q_{i,k}:=\emptyset$ for any integer $k\in(M_i,K]$.
Besides, for any integer $k\in[1,M_i]$, we rewrite the atom supported in $Q_{i,k}$ as $a_{i,k}$ and its corresponding
coefficient as $\lz_{i,k}$; for any integer $k\in(M_i,K]$, let $a_{i,k}:=0$ and $\lz_{i,k}:=0$.
For any $j\in J$, let $$\mathcal{R}_j:=\{R_i:\,\, i\in\nn,\,\, R_i\cap Q_j\neq\emptyset\}.$$
Then $\#\mathcal{R}_j\le 2^n$.
Let
$$ C_1:=\min\lf\{\lf(\frac{2c_2}{c_1}\r)^{n\lf(\frac{1}{w}-\frac{1}{v}-\az\r)},1 \r\}.$$
For any $k\in\{1,\ldots,K\}$ and $i\in\nn$, let $\widetilde{a}_{i,k}:=C_1a_{i,k}\mathbf{1}_{R_i}$.
Clearly, $\widetilde{a}_{i,k}$ is a local $(v,w,s)_{\az,c_2}$-atom supported in $R_i$.
From the definition of $\lz_{i,k}$ and
$\#\mathcal{R}_j\le 2^n$, we deduce that, for any $k\in\{1,\ldots,K\}$,
\begin{align}\label{j2}
\lf(\sum_{i\in\nn}\lf|\frac{\lz_{i,k}}{C_1}\r|^v\r)^{\frac{1}{v}}
        \le\frac{1}{C_1}\lf(\sum_{i\in\nn}\sum_{j\in J:\,\,Q_j\cap R_i\neq\emptyset}
        \lf|\lz_j\r|^v\r)^{\frac{1}{v}}\le\frac{2^{\frac{n}{v}}}{C_1}
        \lf(\sum_{j\in J}\lf|\lz_j\r|^v\r)^{\frac{1}{v}}
        \le\frac{2^{1+\frac{n}{v}}}{C_1}\|g\|_{\widetilde{hk}_{(v,w,s)_{\az,c_1}}(\cX)}.
        \end{align}
Combining this and Remark \ref{def.poly}, we obtain $\sum_{i\in\nn}\frac{\lz_{i,k}}{C_1}\widetilde{a}_{i,k}$
converges in $(jn_{{(v',w',s)_{\az,c_2}}}(\cX))^{\ast}$.
For any $k\in\{1,\ldots,K\}$, let $g_k:=\sum_{i\in\nn}\frac{\lz_{i,k}}{C_1}\widetilde{a}_{i,k}$
in $(jn_{{(v',w',s)_{\az,c_2}}}(\cX))^{\ast}$. Then
\begin{align}\label{eq.hkc2}
  \lf\|g_k\r\|_{\widetilde{hk}_{(v,w,s)_{\az,c_2}}(\mathcal{X})}
  \ls\|g\|_{\widetilde{hk}_{(v,w,s)_{\az,c_1}}(\cX)}.
\end{align}

Now, we claim that $g=g_0+\sum^K_{k=1}g_k$ in $(jn_{(v',w',s)_{\az,c_2}}(\cX))^{\ast}$.
Indeed, for any $f\in jn_{(v',w',s)_{\az,c_2}}(\cX)$,
by \eqref{j2} and an argument similar to that used in the estimation of \eqref{r1}, we obtain
\begin{align*}
  \sum^{K}_{k=1}\sum_{i\in\nn}\lf|\int_{R_i}\frac{\lz_{i,k}}{C_1}\widetilde{a}_{i,k}f\r|
  &\le\sum^{K}_{k=1}\lf(\sum_{i\in\nn}\lf|\frac{\lz_{i,k}}{C_1}\r|^v\r)^{\frac{1}{v}}
         \|f\|_{jn_{(v',w',s)_{\az,c_2}}(\cX)}\\
  &\le K\frac{2^{\frac{n}{v}+1}}{C_1}\|g\|_{\widetilde{hk}_{(v,w,s)_{\az,c_1}}(\cX)}
         \|f\|_{jn_{(v',w',s)_{\az,c_2}}(\cX)}<\fz.
\end{align*}
From this, the definitions of $\widetilde{a}_{i,k}$, $a_{i,k}$ and
$\lz_{i,k}$, $\bigcup_i R_i=\rn$ and Proposition \ref{prop.jnc}, we deduce that
\begin{align*}
  \lf\langle g_0,f\r\rangle+\sum^K_{k=1}\lf\langle g_k,f\r\rangle
  &=\lf\langle g_0,f\r\rangle+\sum^K_{k=1}\sum_{i\in\nn}\int_{R_i}\frac{\lz_{i,k}}{C_1}\widetilde{a}_{i,k}f
  =\lf\langle g_0,f\r\rangle+\sum^K_{k=1}\sum_{i\in\nn}\int_{R_i}{\lz_{i,k}}a_{i,k}f\\
  &=\lf\langle g_0,f\r\rangle+\sum_{i\in\nn}\sum^K_{k=1}\int_{R_i}\lz_{i,k} a_{i,k}f
  =\lf\langle g_0,f\r\rangle+\sum_{i\in\nn}\sum_{\{j\in J:\,\, R_i\cap Q_j=\emptyset\}}\int_{R_i}\lz_j a_jf\\
  &=\lf\langle g_0,f\r\rangle+\sum_{j\in J}\sum_{\{i\in\nn:\,\, R_i\cap Q_j=\emptyset\}}\int_{R_i}\lz_j a_jf\\
  &=\sum_{j\in\nn\setminus J}\int_{Q_j}\lz_j a_j f+\sum_{j\in J}\int_{Q_j}\lz_j a_jf
  =     \lf\langle g,f\r\rangle.
\end{align*}
This proves the above claim. By this claim, \eqref{eq.hkc1}, \eqref{eq.hkc2} and
$K\le(\frac{l_1}{c_1}+2)^n$, we further conclude that
\begin{align}\label{eq.hkc4}
  \|g\|_{hk_{{(v,w,s)_{\az,c_2}}}(\mathcal{X})}
  \le \|g_0\|_{\widetilde{hk}_{{(v,w,s)_{\az,c_2}}}(\cX)}+
         \sum^{K}_{k=1}\|g_k\|_{\widetilde{hk}_{{(v,w,s)_{\az,c_2}}}(\cX)}
  \ls\|g\|_{\widetilde{hk}_{(v,w,s)_{\az,c_1}}(\cX)}.
\end{align}

Now, for any $G\in hk_{(v,w,s)_{\az,c_1}}(\mathcal{X})$, by Definition \ref{def.hkvws},
 we know that there exists a sequence
$\{g_i\}_{i\in\nn}\subset \widetilde{hk}_{(v,w,s)_{\az,c_1}}(\cX)$ such that
$$\sum_{i\in\nn}\|g_i\|_{\widetilde{hk}_{(v,w,s)_{\az,c_1}}(\cX)}\le 2\|G\|_{hk_{(v,w,s)_{\az,c_1}}(\mathcal{X})}$$
and
$G:=\sum_{i\in\nn}g_i$ in $(jn_{(v',w',s)_{\az,c_1}}(\cX))^{\ast}$.
From this, Proposition \ref{prop.jnc}, Remark \ref{rem.h} and \eqref{eq.hkc4}, we deduce that
$$\|G\|_{hk_{(v,w,s)_{\az,c_2}}(\mathcal{X})}\le\sum_{i\in\nn}\|g_i\|_{hk_{(v,w,s)_{\az,c_2}}(\mathcal{X})}
\ls\sum_{i\in\nn}\|g_i\|_{\widetilde{hk}_{(v,w,s)_{\az,c_1}}(\mathcal{X})}\ls\|G\|_{hk_{(v,w,s)_{\az,c_1}}(\mathcal{X})}.$$
Therefore, we have $G\in hk_{(v,w,s)_{\az,c_2}}(\cX)$ and hence
$hk_{(v,w,s)_{\az,c_1}}(\cX)\subset hk_{(v,w,s)_{\az,c_2}}(\cX)$.
This finishes the proof of Proposition \ref{prop.hkc}.
\end{proof}

\begin{remark}
Based on Proposition \ref{prop.hkc}, henceforth, we simply write the local
${(v,w,s)_{\az,c_0}}$-atom, the spaces $\widetilde{hk}_{(v,w,s)_{\az,c_0}}(\cX)$ and $hk_{(v,w,s)_{\az,c_0}}(\cX)$, respectively,
as the local ${(v,w,s)_{\az}}$-atom, the spaces $\widetilde{hk}_{(v,w,s)_{\az}}(\cX)$ and $hk_{(v,w,s)_{\az}}(\cX)$.
\end{remark}

As is well known, a bounded linear functional
on a dense subspace in $hk_{(v,w,s)_{\az}}(\cX)$ can be continuously extended to
the whole space $hk_{(v,w,s)_{\az}}(\cX)$.
To show the duality theorem, we first introduce a dense subspace of $hk_{(v,w,s)_{\az}}(\cX)$.

\begin{definition}
\label{def.hkfin}
Let $v\in(1,\fz)$, $w\in(1,\fz]$, $s\in \zz_{+}$ and $\az\in[0,\fz)$.
The \emph{space $hk^{\mathrm{fin}}_{(v,w,s)_{\az}}(\mathcal{X})$} is defined to be
the set of all finite linear combinations of local
$(v,w,s)_{\az}$-atoms supported, respectively, in cubes in $\cX$.
\end{definition}

\begin{remark}
\label{rem.dense}
Let $v$, $w$, $s$ and $\az$ be as in Definition \ref{def.hkfin}. We claim that
$hk^{\mathrm{fin}}_{(v,w,s)_{\az}}(\cX)$ is dense in $hk_{(v,w,s)_{\az}}(\cX)$.
Indeed, for any $g\in hk_{(v,w,s)_{\az}}(\cX)$, by Definitions \ref{def.poly} and \ref{def.hkvws},
we know that there exists a representation
$$g=\sum_{i\in\nn}\sum_{j\in\nn}\lz_{i,j}a_{i,j} \quad \mbox{in}\ (jn_{(v',w',s)_{\az}}(\cX))^{\ast},$$
where $\{a_{i,j}\}_{i,j\in\nn}$ are local
$(v,w,s)_{\az}$-atoms supported, respectively, in cubes $\{Q_{i,j}\}_{i,j\in\nn}$,
$\{Q_{i,j}\}_{j\in\nn}$ for any given $i\in\nn$ have pairwise disjoint interiors,
and $\sum_{i\in\nn}(\sum_{j\in\nn}|\lz_{i,j}|^v)^{\frac{1}{v}}<\fz$. It is easy to see
that, for any $l,m\in\nn$, $\sum^l_{i=1}\sum^m_{j=1}\lz_{i,j}a_{i,j}\in hk^{\mathrm{fin}}_{(v,w,s)_{\az}}(\mathcal{X})$ and
$$\lf\| g-\sum^l_{i=1}\sum^m_{j=1}\lz_{i,j}a_{i,j}\r\|_{hk_{(v,w,s)_{\az}}(\mathcal{X})}
\le \sum_{i\ge l+1}\lf(\sum_{j\in\nn}\lf|\lz_{i,j}\r|^v\r)^{\frac{1}{v}}+\sum^l_{i=1}
\lf(\sum_{j\ge m+1}\lf|\lz_{i,j}\r|^v\r)^{\frac{1}{v}}\quad\to0\quad\mbox{as}\ l,\,m\to\fz.$$
This proves the above claim.
\end{remark}
In what follows, for any given normed space $\mathbb{X}$, we use
the \emph{symbol} $\mathbb{X}^{\ast}$ to denote its dual space.

\begin{theorem}
\label{theo.dual}
Let $v\in(1,\fz)$, $1/v+{1}/{v'}=1$, $w\in(1,\fz)$,
${1}/{w}+{1}/{w'}=1$, $s\in \zz_{+}$ and $\az\in[0,\fz)$.
Then $ jn_{(v',w',s)_{\az}}(\cX)=(hk_{(v,w,s)_{\az}}(\cX))^{\ast}$ in the following sense:
\begin{itemize}
\item[\rm{(i)}]
For any given $f\in jn_{(v',w',s)_{\az}}(\cX)$, then the linear functional
$$\cl_f:\ g\longmapsto\lf\langle \cl_f,g\r\rangle:=\int_{\cX}fg, \qquad\forall\,g\in hk^{\mathrm{fin}}_{(v,w,s)_{\az}}(\cX)$$
can be extended to a bounded linear functional on $hk_{(v,w,s)_{\az}}(\cX)$.
Moreover, it holds true that $\| \cl_f\|_{(hk_{(v,w,s)_{\az}}(\cX))^{\ast}}\le \|f\|_{jn_{(v',w',s)_{\az}}(\cX)}$.
\item[\rm{(ii)}]
Any bounded linear functional $ \cl$ on $hk_{(v,w,s)_{\az}}(\cX)$ can be represented by a function $f\in jn_{(v',w',s)_{\az}}(\cX)$
in the following sense:
\begin{align}\label{d1}
\lf\langle \cl,g\r\rangle =\int_{\cX}fg,\quad \forall\, g\in hk^{\mathrm{fin}}_{(v,w,s)_{\az}}(\cX).
\end{align}
Moreover, there exists a positive constant $C$, depending only on $s$, such that
$\|f\|_{jn_{(v',w',s)_{\az}}(\cX)}\le C\|\cl \|_{(hk_{(v,w,s)_{\az}}(\cX))^{\ast}}$.
\end{itemize}
\end{theorem}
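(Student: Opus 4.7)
Part (i) is essentially a packaged form of the atomic estimate already isolated in Remarks \ref{rem.poly} and \ref{rem.hkvws}. For any $f\in jn_{(v',w',s)_{\az}}(\cX)$ and any $g\in hk^{\mathrm{fin}}_{(v,w,s)_{\az}}(\cX)$, write $g=\sum_j\lz_j a_j$ as a finite linear combination of local $(v,w,s)_\az$-atoms supported in pairwise disjoint cubes $\{Q_j\}$. The estimate \eqref{r1} already gives $|\int_\cX fg|\le(\sum_j|\lz_j|^v)^{1/v}\|f\|_{jn_{(v',w',s)_\az}(\cX)}$. Infimizing over polymer decompositions in $\widetilde{hk}_{(v,w,s)_\az}(\cX)$ and then summing over outer decompositions as in Definition \ref{def.hkvws} (using the argument of Remark \ref{rem.hkvws}) yields $|\cl_f(g)|\le\|f\|_{jn_{(v',w',s)_\az}(\cX)}\|g\|_{hk_{(v,w,s)_\az}(\cX)}$ on the dense subspace $hk^{\mathrm{fin}}$ (Remark \ref{rem.dense}), so $\cl_f$ has a unique bounded extension to $hk_{(v,w,s)_\az}(\cX)$ with the required norm control.

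For part (ii), the first step is to produce $f$ locally by Riesz representation. Fix any cube $Q\subset\cX$ with $\ell(Q)\ge c_0$. By Definition \ref{def.atom} no moment conditions are imposed on atoms supported in $Q$, so the map $h\mapsto |Q|^{1/w-1/v-\az}h/\|h\|_{L^w(Q)}$ exhibits every nonzero $h\in L^w(Q)$ as a scalar multiple of a local $(v,w,s)_\az$-atom and realizes a continuous embedding $L^w(Q)\hookrightarrow hk^{\mathrm{fin}}_{(v,w,s)_\az}(\cX)$. Composing with $\cl$ produces a bounded linear functional on $L^w(Q)$, whose Riesz representative I denote by $f_Q\in L^{w'}(Q)$. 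Uniqueness of the Riesz representation forces $f_{Q'}|_Q=f_Q$ a.e.\ whenever $Q\subset Q'$ both have side at least $c_0$. When $\cX=\rn$, picking an increasing sequence of such cubes exhausting $\rn$ produces a single $f\in L^{w'}_\loc(\cX)$; when $\cX=Q_0\subsetneqq\rn$, Proposition \ref{prop.hkc} lets me lower $c_0$ so that $Q_0$ itself qualifies, giving $f\in L^{w'}(Q_0)$. For an atom $a$ supported in a small cube $Q$ (where $\ell(Q)<c_0$), I include $Q$ into a large cube in which $a$ is again (a scalar multiple of) an atom, and use the moment conditions $\int_Q a(x)x^\beta dx=0$ to check that $\cl(a)=\int_\cX fa$; this propagates to all of $hk^{\mathrm{fin}}$ by linearity, giving \eqref{d1}.

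Next I verify $f\in jn_{(v',w',s)_\az}(\cX)$ with $\|f\|_{jn_{(v',w',s)_\az}(\cX)}\ls\|\cl\|_{(hk_{(v,w,s)_\az}(\cX))^\ast}$. Let $\{Q_j\}_{j=1}^N$ be any finite family of interior pairwise disjoint cubes in $\cX$ and set $b_j:=|Q_j|^{-\az}[\fint_{Q_j}|f-P^{(s)}_{Q_j,c_0}(f)|^{w'}]^{1/w'}$. By $L^{w'}$--$L^w$ duality applied in the quotient $L^{w'}(Q_j)/\mathcal{P}_s(Q_j)$ when $\ell(Q_j)<c_0$ and in $L^{w'}(Q_j)$ when $\ell(Q_j)\ge c_0$, I choose (up to arbitrarily small error) $h_j\in L^w(Q_j)$ with $\|h_j\|_{L^w(Q_j)}\le 1$ and satisfying the corresponding moment conditions such that $|Q_j|^{-1/w'}\int_{Q_j}fh_j$ approximates $b_j|Q_j|^\az$; the equivalence between this quotient norm and the representative norm with center $P^{(s)}_{Q_j,c_0}(f)$ uses the projection bound \eqref{eq.dxsC}, introducing only a constant depending on $s$. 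Setting $a_j:=|Q_j|^{1/w-1/v-\az}h_j$ produces a local $(v,w,s)_\az$-atom, and a direct computation using $1/v+1/v'=1=1/w+1/w'$ gives $|Q_j|b_j^{v'}\sim|\cl(a_j)|^{v'}$. Choosing $\lz_j:=b_j^{v'-1}\mathrm{sgn}(\cl(a_j))$ (supported on the finite family) and invoking $|\sum_j\lz_j\cl(a_j)|\le\|\cl\|(\sum_j|\lz_j|^v)^{1/v}$, together with the bookkeeping $v(v'-1)=v'$, collapses to $(\sum_j|Q_j|b_j^{v'})^{1/v'}\ls\|\cl\|$. Taking supremum over all such families delivers the desired bound.

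The principal obstacle lies in the gluing step of part (ii) and the bookkeeping needed to simultaneously handle both scales: small cubes impose moment conditions on atoms and yield $f_Q$ only modulo $\mathcal{P}_s(Q)$, while large cubes impose no moment conditions and pin $f_Q$ unambiguously. Reconciling the two via the cutoff polynomial $P^{(s)}_{Q,c_0}(f)$ from Definition \ref{def.jnpqs}, and exploiting the freedom to vary $c_0$ afforded by Proposition \ref{prop.hkc}, is where most of the technical care is required; once compatibility is established, the $jn$-norm bound is a clean $\ell^v$--$\ell^{v'}$ duality of the atomic coefficients.
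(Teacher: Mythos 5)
Your plan follows the paper's proof in all essentials: part (i) is the atomic pairing estimate of Remarks \ref{rem.poly} and \ref{rem.hkvws} plus density of $hk^{\mathrm{fin}}_{(v,w,s)_{\az}}(\cX)$ (Remark \ref{rem.dense}), and part (ii) runs through local Riesz representation of $\cl$ on $L^w$ over cubes of side length at least $c_0$, gluing of the local representatives, and then a cube-by-cube $L^{w'}$--$L^w$ duality combined with an $\ell^{v}$--$\ell^{v'}$ duality on the atomic coefficients. The only genuinely different (but equivalent) choice is how you produce admissible test atoms on small cubes: you dualize the quotient $L^{w'}(Q_j)/\mathcal{P}_s(Q_j)$ against the annihilator of $\mathcal{P}_s(Q_j)$ in $L^w(Q_j)$ and use the near-best-approximation property of $P^{(s)}_{Q_j}$ from \eqref{eq.dxsC}, whereas the paper takes an unconstrained $a_j$ with $\|a_j\|_{L^w(Q_j)}\le |Q_j|^{1/w}$, uses the self-adjointness identity $\fint_{Q_j}[f-P^{(s)}_{Q_j,c_0}(f)]a_j=\fint_{Q_j}f[a_j-P^{(s)}_{Q_j,c_0}(a_j)]$, and renormalizes $a_j-P^{(s)}_{Q_j,c_0}(a_j)$ into an atom (again via \eqref{eq.dxsC}). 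Both variants cost only a factor $1+C_{(s)}$.

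One concrete error in the execution: the coefficient $\lz_j:=b_j^{v'-1}\mathrm{sgn}(\cl(a_j))$ is mis-normalized. Since $\cl(a_j)\sim |Q_j|^{1/v'}b_j$, the $\ell^{v'}$-norm you must extremize is that of the sequence $(|Q_j|^{1/v'}b_j)_j$, so the correct choice is $\lz_j\sim |Q_j|^{1/v}b_j^{v'-1}$ (equivalently, the paper's $\{\lz_i\}$ normalized by $(\sum_i|Q_i|\lz_i^v)^{1/v}\le 1$). With your $\lz_j$, the inequality $|\sum_j\lz_j\cl(a_j)|\le\|\cl\|(\sum_j|\lz_j|^v)^{1/v}$ only yields $\sum_j|Q_j|^{1/v'}b_j^{v'}\ls\|\cl\|(\sum_jb_j^{v'})^{1/v}$, which by the H\"older inequality is implied by, but does not imply, the target $(\sum_j|Q_j|b_j^{v'})^{1/v'}\ls\|\cl\|$; for instance, two cubes with $|Q_1|=1$, $b_1=1$, $|Q_2|=S$, $b_2^{v'}=S^{-1/v'}$ satisfy your inequality with a bounded constant while the target quantity grows like $S^{1/(vv')}$. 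The fix is a one-line change of normalization and does not affect the rest of the argument.
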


\begin{proof}
Let $v$, $w$, $s$ and $\az$ be the same as in this theorem and
$c_0\in(0,\ell(\cX))$.
Let $f\in jn_{(v',w',s)_{\az}}(\cX)$. For any $g\in hk^{\mathrm{fin}}_{(v,w,s)_{\az}}(\cX)$,
let
$$\lf\langle \cl_f,g\r\rangle:=\int_{\cX}fg.$$
By Remarks \ref{rem.poly} and \ref{rem.hkvws}, we have
$|\langle \cl_f,g\rangle|\le\|f\|_{jn_{(v',w',s)_{\az}}(\cX)}
\|g\|_{hk_{(v,w,s)_{\az}}(\cX)}.$
Combining this and Remark \ref{rem.dense}, we then complete the proof of \rm{(i)}.

Now, we show \rm{(ii)}.
Let $\cl$ represent a bounded linear functional on $hk_{(v,w,s)_{\az}}(\cX)$.
We now claim that there exists a function $f$ on $\cX$ such that \eqref{d1} holds true.
Indeed, if $\cX$ is a cube $Q_0\subsetneqq\rn$, by Definition \ref{def.hkvws}, we know that,
for any $h\in {L^w(Q_0)}$,
$$
\| h\|_{hk_{(v,w,s)_{\az}}(Q_0)}\le |Q_0|^{\frac{1}{v}+\az-\frac{1}{w}}\| h\|_{L^w(Q_0)}.
$$
Write $\cl_{Q_0}$ to be the restriction of $\cl$
to $L^w(Q_0)$. Thus, $\cl_{Q_0}$ is bounded on $L^w(Q_0)$. By the well-known duality
$(L^w(Q_0))^{\ast}=L^{w'}(Q_0)$, we find that there exists a unique function $f\in L^{w'}(Q_0)$ such that
\begin{align}\label{j3}
\lf\langle \cl,h\r\rangle=\lf\langle \cl_{Q_0},h\r\rangle=\int_{Q_0}fh,\quad \forall\, h\in L^w(Q_0),
\end{align}
here and hereafter, ${1}/{w}+{1}/{w'}=1$.
Since $hk^{\mathrm{fin}}_{(v,w,s)_{\az}}(Q_0)$ is contained in $L^w(Q_0)$ as sets,
this proves \eqref{d1} when $\cX$ is a cube $Q_0\subsetneqq\rn$.
If $\cX=\rn$, for any $i\in\nn$, let $R_i:=[-c_0-i,c_0+i]^n$.
Let $ \cl_{R_i}$ denote the restriction of $\cl$ to $L^{w}(R_i)$.
Using the same argument as that used in the estimation of \eqref{j3},
we find a unique function $f_i\in L^{w'}(R_i)$ such that
$$\lf\langle \cl,h\r\rangle=\lf\langle \cl_{R_i},h\r\rangle=\int_{R_i}f_ih,\quad \forall\, h\in L^{w}(R_i).$$
From this, it follows that, for any $i\in\nn$ and $h\in L^{w}(R_i)$,
$$\int_{R_i}\lf(f_{i+1}-f_i\r)h=\lf\langle \cl,h\r\rangle-\lf\langle \cl,h\r\rangle=0.$$
Hence,
$f_{i+1}=f_i$ almost everywhere on $R_i$. Let
$$f:=f_1\mathbf{1}_{R_1}+
\sum^{\fz}_{i=1}f_{i+1}\mathbf{1}_{R_{i+1}\setminus R_i}.$$
For any $g\in hk^{\mathrm{fin}}_{(v,w,s)_{\az}}(\cX)$, then $g$ has a compact support in $\cX$ and hence
there exists an $i_0\in\nn$ such that
$\supp(g)\subset R_{i_0}$. Since $g\in L^{w}(R_{i_0})$, it follows that
$\lf\langle \cl,g\r\rangle=\int_{R_{i_0}}f_{i_0}g=\int_{\rn}fg$.
This proves (\ref{d1}) when $\cX=\rn$. Thus, the above claim holds true.

Now, we still need to show $\|f\|_{jn_{(v',w',s)_{\az}}(\cX)}\ls\|\cl\|_{(hk_{(v,w,s)_{\az}}(\cX))^{\ast}}$.
Suppose $\{Q_i\}_{i\in\nn}$ are interior mutually disjoint cubes in $\cX$.
Then we know that, for any $i\in\nn$,
\begin{align*}
  \lf[\fint_{Q_i}\lf|f-P^{(s)}_{Q_i,c_0}(f)\r|^{w'}\r]^{\frac{1}{w'}}
  &=\sup\lf\{\fint_{Q_i}\lf[f-P^{(s)}_{Q_i,c_0}(f)\r]a_{i} : \,\,\lf(\fint_{Q_i}|a_i|^w\r)^{\frac{1}{w}}\le 1\r\}\\
  &=\sup\lf\{\fint_{Q_i}f\lf[a_{i}-P^{(s)}_{Q_i,c_0}(a_i)\r] :\,\, \lf(\fint_{Q_i}|a_i|^w\r)^{\frac{1}{w}}\le 1\r\}.
\end{align*}
For any $i\in\nn$, choose $a_i$ such that $\|a_i\|_{L^w(Q_i)}\le |Q_i|^{\frac{1}{w}}$ and
\begin{align}\label{eq.dual2}
\lf[\fint_{Q_i}\lf|f-P^{(s)}_{Q_i,c_0}(f)\r|^{w'}\r]^{\frac{1}{w'}}
\le 2\fint_{Q_i}f\lf[a_i-P^{(s)}_{Q_i,c_0}(a_i)\r],
\end{align}
and let $A_i:=|Q_i|^{-\az}[\fint_{Q_i}|f-P^{(s)}_{Q_i,c_0}(f)|^{w'}]^{\frac{1}{w'}}$.
For any $N\in\nn$, by the fact that $(\ell^v)^{\ast}=\ell^{v'}$, where ${1}/{v}+{1}/{v'}=1$, we
choose $\{\lz_i\}^N_{i=1}\subset [0,\fz)$ such that
$(\sum^N_{i=1}|Q_i|\lz_i^v)^{\frac{1}{v}}\le 1$ and
\begin{align}\label{eq.dual3}
\lf(\sum^N_{i=1}\lf|Q_i\r|A^{v'}_i\r)^{\frac{1}{v'}}\le 2
\sum^N_{i=1}\lf|Q_i\r|A_i\lz_i.
\end{align}
For any $N\in\nn$, let
$$
g_N:=\sum^N_{i=1}|Q_i|^{-\az}\lz_i\lf[a_i-P^{(s)}_{Q_i,c_0}(a_i)\r].
$$
From \eqref{eq.dxsC} and the H\"{o}lder inequality, we deduce that
\begin{align*}
\lf\|a_i-P^{(s)}_{Q_i,c_0}(a_i)\r\|_{L^w(Q_i)}
&\le\lf\|a_i\r\|_{L^w(Q_i)}+\lf\|P^{(s)}_{Q_i,c_0}(a_i)\r\|_{L^w(Q_i)}\\
&\le\lf[1+C_{(s)}\r]\|a_i\|_{L^w(Q_i)}\le \lf[1+C_{(s)}\r]|Q_i|^{\frac{1}{w}},
\end{align*}
where $C_{(s)}$ is the same positive constant as in \eqref{eq.dxsC}.
For any $i\in\{1,\ldots,N\}$, let
$$\widetilde{a}_i:=[1+C_{(s)}]^{-1}|Q_i|^{-\frac{1}{v}-\az}\lf[a_i-P^{(s)}_{Q_i,c_0}(a_i)\r].$$
Clearly, $\{\widetilde{a}_i\}^N_{i=1}$
are local $(v,w,s)_{\az}$-atoms supported, respectively, in $\{Q_i\}^N_{i=1}$.
By this, we obtain  $g_N\in hk^{\mathrm{fin}}_{(v,w,s)_{\az}}(\cX)$.
Moreover, from the choice of $\{\lz_i\}^N_{i=1}$, we deduce that
\begin{align}\label{eq.dual4}
  \lf\|g_N\r\|_{hk_{(v,w,s)_{\az}}(\cX)}
  &=\lf\|\lf[1+C_{(s)}\r]\sum^N_{i=1}\lz_i\lf|Q_i\r|^{\frac{1}{v}}\widetilde{a}_i\r\|_{hk_{(v,w,s)_{\az}}(\cX)}\\
  &\le\lf[1+C_{(s)}\r]\lf[\sum^N_{i=1}\lf(|Q_i|^{\frac{1}{v}}\lz_i\r)^v\r]^{\frac{1}{v}}\le 1+C_{(s)}\notag.
\end{align}
By \eqref{eq.dual3}, \eqref{eq.dual2} and \eqref{eq.dual4},
we conclude that
\begin{align*}
  \lf(\sum^N_{i=1}|Q_i|A^{v'}_i\r)^{\frac{1}{v'}}
  &\le 2\sum^N_{i=1}|Q_i|A_i\lz_i=2\sum^N_{i=1}
       \lz_i|Q_i|^{1-\az}\lf[\fint_{Q_i}\lf|f-P^{(s)}_{Q_i,c_0}(f)\r|^{w'}\r]^{\frac{1}{w'}}\\
  &\le 4\sum^N_{i=1}\lz_i|Q_i|^{1-\az}\fint_{Q_i}f\lf[a_i-P^{(s)}_{Q_i,c_0}(a_i)\r]
       =4\lf\langle \cl,g_N\r\rangle\\
  &\le 4\lf\| \cl\r\|_{(hk_{(v,w,s)_{\az}}(\cX))^{\ast}}
       \lf\|g_N\r\|_{hk_{(v,w,s)_{\az}}(\cX)}
       \le 4\lf[1+C_{(s)}\r]\lf\| \cl\r\|_{(hk_{(v,w,s)_{\az}}(\cX))^{\ast}},
\end{align*}
which, together with the arbitrariness of $N$ and $\{Q_i\}_{i\in\nn}$, further implies that
$$\|f\|_{jn_{(v',w',s)_{\az}}(\cX)}\ls \lf\|\cl\r\|_{(hk_{(v,w,s)_{\az}}(\cX))^{\ast}}.$$
This finishes the proof of \rm{(ii)} and hence of Theorem \ref{theo.dual}.
\end{proof}

For any given cube $Q_0$, by the way similar to that used in \cite[Definition 6.1]{DHKY}, we can construct
the localized Hardy-kind space $\widehat{hk}_{v,w}(Q_0)$ with $1<v<w\le\fz$, which proves to
be equivalent with $hk_{(v,w,0)_0}(Q_0)$ in Proposition \ref{prop.hk1eq} below.

\begin{definition}
\label{def.hk1}
Let $v\in(1,\fz)$, $w\in(v,\fz]$ and $Q_0\subsetneqq\rn$ be a cube.
The \emph{localized Hardy-kind space} $\widehat{hk}_{v,w}(Q_0)$ is defined to be the set of all $g\in L^v(Q_0)$ such that
$$g=\sum_{i\in\nn}\sum_{j\in\nn}\lz_{i,j}a_{i,j} \quad\mathrm{in}\,\, L^v(Q_0), $$
where $\{a_{i,j}\}_{i,j\in\nn}$ are local $(v,w,0)_{0}$-atoms
supported, respectively, in subcubes
$\{Q_{i,j}\}_{i,j\in\nn}$ of $Q_0$,
$\{Q_{i,j}\}_{j\in\nn}$ for any given $i\in\nn$ have pairwise disjoint interiors,
$\{\lambda_{i,j}\}_{i,j\in\nn}\subset\mathbb{C}$ and
$$\sum_{i\in\nn}\lf(\sum_{j\in\nn}|\lz_{i,j}|^v\r)^{\frac{1}{v}}<\fz.$$
For any $g\in{\widehat{hk}_{v,w}(Q_0)}$, define
$$\|g\|_{\widehat{hk}_{v,w}(Q_0)}:=\inf \sum_{i\in\nn}\lf(\sum_{j\in\nn}\lf|\lz_{i,j}\r|^v\r)^{\frac{1}{v}},$$
 where the infimum is taken over all such decompositions of $g$ as above.
\end{definition}

\begin{remark}\label{rem.hk1}
Let $1<v<w\le\fz$ and $Q_0\subsetneqq\rn$ be a cube.
\begin{enumerate}
\item[(i)]
Let $\{a_{i,j}\}_{i,j\in\nn}$ be local $(v,w,0)_{0}$-atoms supported, respectively, in subcubes
$\{Q_{i,j}\}_{i,j\in\nn}$ of $Q_0$,
$\{Q_{i,j}\}_{j\in\nn}$ for any given $i\in\nn$ have pairwise disjoint interiors,
$\{\lambda_{i,j}\}_{i,j\in\nn}\subset\mathbb{C}$
and $\sum_{i\in\nn}(\sum_{j\in\nn}|\lz_{i,j}|^v)^{\frac{1}{v}}<\fz$.
We claim that $\sum_{i\in\nn}\sum_{j\in\nn}\lz_{i,j}a_{i,j}$
converges in $L^v(Q_0)$. Indeed, by the H\"{o}lder inequality,
we know that, for any $l\in\nn$ and $m\in\zz_{+}$,
$$
     \lf(\sum^{l+m}_{j=l}\lf|Q_{i,j}\r|\fint_{Q_{i,j}}\lf|\lz_{i,j}a_{i,j}\r|^v\r)^{\frac{1}{v}}
\le \lf[\sum^{l+m}_{j=l}\lf|Q_{i,j}\r|\lf(\fint_{Q_{i,j}}
     \lf|\lz_{i,j}a_{i,j}\r|^w\r)^{\frac{v}{w}}\r]^{\frac{1}{v}}
\le \lf(\sum^{l+m}_{j=l}\lf|\lz_{i,j}\r|^v\r)^{\frac{1}{v}},$$
which, together with $(\sum_{j\in\nn}|\lz_{i,j}|^v)^{\frac{1}{v}}<\fz$, implies that
$\sum_{j\in\nn}\lz_{i,j}a_{i,j}$ converges in $L^v(Q_0)$.
Combining this and $\sum_{i\in\nn}(\sum_{j\in\nn}|\lz_{i,j}|^v)^{\frac{1}{v}}<\fz$,
we then complete the proof of the above claim.
Moreover, we also have
$$
\lf\|\sum_{i\in\nn}\sum_{j\in\nn}\lz_{i,j}a_{i,j}\r\|_{L^v(Q_0)}
\le \sum_{i\in\nn}\lf(\sum_{j\in\nn}\lf|\lz_{i,j}\r|^v\r)^{\frac{1}{v}}.
$$
\item[(ii)] We claim that ${\widehat{hk}_{v,w}(Q_0)}\subset L^v(Q_0)$ with a continuous embedding.
Indeed, let $g\in{\widehat{hk}_{v,w}(Q_0)}$. By (i) of this remark and Definition \ref{def.hk1}, we know that
$g\in L^v(Q_0)$ and $\|g\|_{L^v(Q_0)}\le \|g\|_{\widehat{hk}_{v,w}(Q_0)}.$
\end{enumerate}
\end{remark}

\begin{proposition}
\label{prop.hk1eq}
Let $v\in(1,\fz)$, $w\in(v,\fz]$ and $Q_0\subsetneqq\rn$ be a cube.
Then ${\widehat{hk}_{v,w}(Q_0)}={hk_{(v,w,0)_{0}}(Q_0)}$ with equivalent norms.
\end{proposition}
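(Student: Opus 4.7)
The plan is to establish the two set inclusions by exploiting that both spaces are built from atomic series of the \emph{same} local $(v,w,0)_{0}$-atoms, differing only in the mode of convergence: $L^{v}(Q_0)$-convergence for $\widehat{hk}_{v,w}(Q_0)$ (Definition \ref{def.hk1}) versus weak-$\ast$ convergence in $(jn_{(v',w',0)_{0}}(Q_0))^{\ast}$ for $hk_{(v,w,0)_{0}}(Q_0)$ (Definitions \ref{def.poly} and \ref{def.hkvws}). The strategy is to show that any atomic series satisfying $\sum_{i}(\sum_{j}|\lambda_{i,j}|^v)^{1/v}<\fz$ automatically converges in both senses to compatible limit objects, whence the two infimum-defined norms agree up to multiplicative constants.

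For the inclusion $\widehat{hk}_{v,w}(Q_0)\subset hk_{(v,w,0)_{0}}(Q_0)$, I would start with $g\in\widehat{hk}_{v,w}(Q_0)$ and a near-optimal $L^v$-convergent decomposition $g=\sum_{i,j}\lambda_{i,j}a_{i,j}$. For each fixed $i$, Remark \ref{rem.poly} gives weak-$\ast$ convergence of $\sum_{j}\lambda_{i,j}a_{i,j}$ in $(jn_{(v',w',0)_{0}}(Q_0))^{\ast}$ to some $h_i\in\widetilde{hk}_{(v,w,0)_{0}}(Q_0)$ with $\|h_i\|_{\widetilde{hk}_{(v,w,0)_{0}}(Q_0)}\le(\sum_{j}|\lambda_{i,j}|^v)^{1/v}$, and Remark \ref{rem.hkvws} then gives weak-$\ast$ convergence of $\sum_i h_i$ to some $\widetilde{g}\in hk_{(v,w,0)_{0}}(Q_0)$. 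After identifying $\widetilde{g}$ with the $L^v$-function $g$ (see the last paragraph) and taking the infimum over decompositions, this yields $\|g\|_{hk_{(v,w,0)_{0}}(Q_0)}\le\|g\|_{\widehat{hk}_{v,w}(Q_0)}$.

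For the reverse inclusion, I would start with $g\in hk_{(v,w,0)_{0}}(Q_0)$ and a near-optimal representation $g=\sum_i g_i$ in $(jn_{(v',w',0)_{0}}(Q_0))^{\ast}$, with $g_i=\sum_{j}\lambda_{i,j}a_{i,j}$ whose supporting cubes $\{Q_{i,j}\}_{j}$ are pairwise disjoint for each fixed $i$. By Remark \ref{rem.hk1}(i), each inner series also converges in $L^v(Q_0)$ to some $\widetilde{g}_i$ with $\|\widetilde{g}_i\|_{L^v(Q_0)}\le(\sum_{j}|\lambda_{i,j}|^v)^{1/v}$, and the outer series converges in $L^v(Q_0)$ because $\sum_i(\sum_{j}|\lambda_{i,j}|^v)^{1/v}<\fz$. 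The $L^v$-limit $\widetilde{g}$ lies in $\widehat{hk}_{v,w}(Q_0)$ by Definition \ref{def.hk1} with $\|\widetilde{g}\|_{\widehat{hk}_{v,w}(Q_0)}\le\|g\|_{hk_{(v,w,0)_{0}}(Q_0)}$, and after identification this gives the reverse norm bound.

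The principal obstacle is the \emph{identification} of the $L^v$-object $g$ with the weak-$\ast$ object $\widetilde{g}$ in each direction, which cannot be carried out by a direct H\"older pairing because $jn_{(v',w',0)_{0}}(Q_0)$ only embeds continuously into $L^{w'}(Q_0)$ and not into $L^{v'}(Q_0)$ (since $w'<v'$ on the bounded cube $Q_0$). The remedy is that for any $f\in jn_{(v',w',0)_{0}}(Q_0)$ the pairing is realized by the absolutely convergent series $\sum_{i,j}\lambda_{i,j}\int_{Q_{i,j}}a_{i,j}f$, whose convergence is supplied by Remark \ref{rem.poly} via the vanishing moment $\int a_{i,j}=0$ on atoms with $\ell(Q_{i,j})<c_0$ combined with the trivial fact $P^{(0)}_{Q_{i,j},c_0}(f)=0$ when $\ell(Q_{i,j})\ge c_0$. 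To check that this value is independent of the chosen atomic decomposition and coincides with the $L^v$-pairing, one truncates $f$ to $f_M:=f\mathbf{1}_{\{|f|\le M\}}\in L^\infty(Q_0)\subset L^{v'}(Q_0)$, exchanges the sum and integral via the $L^v$--$L^\infty$ H\"older pairing, and passes $M\to\fz$ using dominated convergence controlled by the uniform bound from Remark \ref{rem.poly}. This common value is $\langle\widetilde{g},f\rangle$, completing the identification and hence the norm equivalence.
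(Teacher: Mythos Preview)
Your strategy is the same as the paper's: show that an atomic series $\sum_{i,j}\lambda_{i,j}a_{i,j}$ with $\sum_i(\sum_j|\lambda_{i,j}|^v)^{1/v}<\infty$ converges in both topologies and identify the two limits. The reverse inclusion $hk_{(v,w,0)_0}(Q_0)\subset\widehat{hk}_{v,w}(Q_0)$ works as you describe; the paper phrases the identification there by testing against $f\in L^{v'}(Q_0)\subset jn_{(v',w',0)_0}(Q_0)$, which makes both pairings agree directly.

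The gap is in your identification for $\widehat{hk}_{v,w}(Q_0)\subset hk_{(v,w,0)_0}(Q_0)$. You use the hard truncation $f_M=f\mathbf{1}_{\{|f|\le M\}}$ and appeal to dominated convergence via Remark~\ref{rem.poly}. But the termwise majorant supplied by that remark is $|\lambda_{i,j}|\,|Q_{i,j}|^{1/v'}\bigl(\fint_{Q_{i,j}}|f_M-P^{(0)}_{Q_{i,j},c_0}(f_M)|^{w'}\bigr)^{1/w'}$, and on small cubes the mean oscillation of a hard truncation is \emph{not} controlled by that of $f$: if $f=1$ on half of a small cube $Q$ and $f=1+\varepsilon$ on the other half, then $\fint_Q|f-f_Q|\sim\varepsilon$ while with $M=1$ one gets $\fint_Q|f_M-(f_M)_Q|\sim 1$. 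Hence no $M$-independent summable majorant is available, and the cruder pointwise bound $\int|\lambda_{i,j}a_{i,j}||f|$ does not help either, since summing it in $j$ would require $f\in L^{v'}(Q_0)$, which membership in $jn_{(v',w',0)_0}(Q_0)$ does not provide (only $L^{w'}$ with $w'<v'$). The paper's remedy is to use the \emph{radial} truncation $f_N(x):=f(x)$ if $|f(x)|\le N$ and $f_N(x):=Nf(x)/|f(x)|$ otherwise; being a Lipschitz-$1$ map on $\mathbb{C}$ it satisfies $\bigl(\fint_Q|f_N-(f_N)_Q|^{w'}\bigr)^{1/w'}\lesssim\bigl(\fint_Q|f-f_Q|^{w'}\bigr)^{1/w'}$ uniformly in $N$ (see \cite[p.\,141]{S}), and together with $|f_N|\le|f|$ on large cubes this yields the required summable majorant. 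Replacing your $f_M$ by this $f_N$ closes the gap and your argument goes through exactly as in the paper.
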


\begin{proof}
Let $v$, $w$ and $Q_0$ be as in Proposition \ref{prop.hk1eq}.
We first show ${\widehat{hk}_{v,w}(Q_0)}\subset {hk_{(v,w,0)_0}(Q_0)}$.
Let $g\in{\widehat{hk}_{v,w}(Q_0)}$. By Definition \ref{def.hk1},
we have
$$g=\sum_{i\in\nn}\sum_{j\in\nn}\lz_{i,j}a_{i,j}\quad \mathrm{in}\,\,
L^v(Q_0),$$
where $\{a_{i,j}\}_{i,j\in\nn}$ are local $(v,w,0)_{0}$-atoms supported, respectively, in subcubes
$\{Q_{i,j}\}_{i,j\in\nn}$ of $Q_0$, $\{Q_{i,j}\}_{j\in\nn}$ for any given $i\in\nn$
is a collection of interior pairwise disjoint cubes,
$\{\lambda_{i,j}\}_{i,j\in\nn}\subset\mathbb{C}$ and
$$\sum_{i\in\nn}\lf(\sum_{j\in\nn}|\lz_{i,j}|^v\r)^{\frac{1}{v}}<\fz.$$
From Remarks \ref{rem.poly} and \ref{rem.hkvws}, it follows that
$\sum_{i\in\nn}\sum_{j\in\nn}\lz_{i,j}a_{i,j}$ converges in $(jn_{(v',w',0)_{0}}(Q_0))^{\ast}$,
here and hereafter, ${1}/{v}+{1}/{v'}=1={1}/{w}+{1}/{w'}$.
Let
$\widetilde{g}:=\sum_{i\in\nn}\sum_{j\in\nn}\lz_{i,j}a_{i,j}$ in $(jn_{(v',w',0)_{0}}(Q_0))^{\ast}$.
Then $\widetilde{g}\in{hk_{(v,w,0)_{0}}(Q_0)}$ and, for any $f\in jn_{(v',w',0)_{0}}(Q_0)$,
we have
\begin{align}\label{a1}
\lf\langle\widetilde{g},f\r\rangle=\sum_{i\in\nn}\sum_{j\in\nn}\int_{Q_0}\lz_{i,j}a_{i,j}f.
\end{align}
Now, we claim that $\widetilde{g}$ is independent of
the above decomposition of $g$ and hence well defined. Indeed, for any given $f\in jn_{(v',w',0)_{0}}(Q_0)$ and any $N\in (0,\fz)$,
let
$$f_N(x):=\left\{ \begin{array}{l@{\quad\quad\mathrm{when}\,\,}l}
f(x)& |f(x)|\le N,\\\displaystyle\frac{f(x)}{|f(x)|}N&|f(x)|>N.
\end{array}\r.$$
From $g\in L^v(Q_0)\subset L^1(Q_0)$ and the boundedness of $f_N$, it follows that $\int_{Q_0}\lf|gf_N\r|<\fz$.
Notice that $g=\sum_{i\in\nn}\sum_{j\in\nn}\lz_{i,j}a_{i,j}$ in $L^v(Q_0)$ and also in $L^1(Q_0)$.
By this, we have
\begin{align}\label{e3}
\int_{Q_0}gf_N=\sum_{i\in\nn}\sum_{j\in\nn}\int_{Q_0}\lz_{i,j}a_{i,j}f_N.
\end{align}
Since $a_{i,j}\in L^{w}(Q_0)$, $f\in jn_{(v'.w',0)_{0}}(Q_0)\subset L^{w'}(Q_0)$ and $|f_N|\le |f|$,
from the dominated convergence theorem,
we deduce that
\begin{align}\label{e2}
\lim_{N\to\fz}\int_{Q_0}\lz_{i,j}a_{i,j}f_N=\int_{Q_0}\lz_{i,j}a_{i,j}f.
\end{align}
By Definition \ref{def.atom}(iii), the H\"{o}lder inequality and
$$\lf[\fint_{Q}\lf|f_N-P^{(0)}_{Q,c_0}(f_N)\r|^{w'}\r]^{\frac{1}{w'}}\ls
\lf[\fint_{Q}\lf|f-P^{(0)}_{Q,c_0}(f)\r|^{w'}\r]^{\frac{1}{w'}}$$ (see \cite[p.\,141, Remark 1.1.3]{S}),
we conclude that
\begin{align}\label{e1}
\lf|\int_{Q_0}\lz_{i,j}a_{i,j}f_N\r|
&\le\int_{Q_0}\lf|\lz_{i,j}a_{i,j}\r|\lf|f_N-P^{(0)}_{Q_{i,j},c_0}(f_N)\r|\\
&\ls\lf|Q_{i,j}\r|
\lf(\fint_{Q_{i,j}}\lf|\lz_{i,j}a_{i,j}\r|^{w}\r)^{\frac{1}{w}}
\lf[\fint_{Q_{i,j}}\lf|f-P^{(0)}_{Q_{i,j},c_0}(f)\r|^{w'}\r]^{\frac{1}{w'}}.\noz
\end{align}
From this and the estimation of \eqref{eq.poly}, it follows that
\begin{align}\label{eq.hk1eq.2}
\sum_{i\in\nn}\lf|\sum_{j\in\nn}\int_{Q_0}\lz_{i,j}a_{i,j}f_N\r|
&\ls \sum_{i\in\nn}\sum_{j\in\nn}\lf|Q_{i,j}\r|
\lf(\fint_{Q_{i,j}}\lf|\lz_{i,j}a_{i,j}\r|^{w}\r)^{\frac{1}{w}}
\lf[\fint_{Q_{i,j}}\lf|f-P^{(0)}_{Q_{i,j},c_0}(f)\r|^{w'}\r]^{\frac{1}{w'}}\\
&\ls \sum_{i\in\nn}\lf(\sum_{j\in\nn}
 \lf|\lz_{i,j}\r|^v\r)^{\frac{1}{v}}\lf\|f\r\|_{jn_{(v',w',0)_0}(Q_0)}<\fz\noz.
\end{align}
By this, \eqref{e1}, the dominated convergence theorem again and \eqref{e2}, we conclude that
\begin{align*}
\lim_{N\to\fz}\sum_{i\in\nn}\sum_{j\in\nn}\int_{Q_0}\lz_{i,j}a_{i,j}f_N
&=\sum_{i\in\nn}\lim_{N\to\fz}\sum_{j\in\nn}\int_{Q_0}\lz_{i,j}a_{i,j}f_N
=\sum_{i\in\nn}\sum_{j\in\nn}\lim_{N\to\fz}\int_{Q_0}\lz_{i,j}a_{i,j}f_N\\
&=\sum_{i\in\nn}\sum_{j\in\nn}\int_{Q_0}\lz_{i,j}a_{i,j}f.
\end{align*}
From this, \eqref{a1} and \eqref{e3}, we deduce that
$$\lf\langle \widetilde{g},f\r\rangle=
\sum_{i\in\nn}\sum_{j\in\nn}\int_{Q_0}\lz_{i,j}a_{i,j}f
=\lim_{N\to\fz}\sum_{i\in\nn}\sum_{j\in\nn}\int_{Q_0}\lz_{i,j}a_{i,j}f_N
=\lim_{N\to\fz}\int_{Q_0}gf_N,$$
which implies that the above claim holds true.
By Definition \ref{def.hk1}, we know that
$$\|\widetilde{g}\|_{{hk_{(v,w,0)_{0}}(Q_0)}}\le \sum_{i\in\nn}\lf(\sum_{j\in\nn}|\lz_{i,j}|^v\r)^{\frac{1}{v}},$$
which, together with the above claim and the arbitrariness of
$\{\lz_{i,j}\}_{i,j\in\nn}$ and $\{a_{i,j}\}_{i,j\in\nn}$, implies that
$$\|\widetilde{g}\|_{{hk_{(v,w,0)_{0}}(Q_0)}}\le\|g\|_{\widehat{hk}_{v,w}(Q_0)}.$$
Thus, we have ${\widehat{hk}_{v,w}(Q_0)}\subset {hk_{(v,w,0)_0}(Q_0)}$.

Next, we show ${hk_{(v,w,0)_0}(Q_0)}\subset{\widehat{hk}_{v,w}(Q_0)}$.
Let $\widetilde{g}\in {hk_{(v,w,0)_{0}}(Q_0)}$.
By Definition \ref{def.hk1}, we have
$$\widetilde{g}=\sum_{i\in\nn}\sum_{j\in\nn}\lz_{i,j}a_{i,j}\quad \mathrm{in} \,\,(jn_{(v',w',0)_{0}}(Q_0))^{\ast},$$
where $\{a_{i,j}\}_{i,j\in\nn}$ are local $(v,w,0)_{0}$-atoms supported, respectively, in subcubes
$\{Q_{i,j}\}_{i,j\in\nn}$ of $Q_0$, $\{Q_{i,j}\}_{j\in\nn}$ for any given $i\in\nn$ have pairwise disjoint interiors,
$\{\lambda_{i,j}\}_{i,j\in\nn}\subset\mathbb{C}$ and
$$\sum_{i\in\nn}\lf(\sum_{j\in\nn}|\lz_{i,j}|^v\r)^{\frac{1}{v}}<\fz.$$
From Remark \ref{rem.hk1}, we deduce that
$\sum_{i\in\nn}\sum_{j\in\nn}\lz_{i,j}a_{i,j}$ converges in $L^v(Q_0)$.
Let $$g:=\sum_{i\in\nn}\sum_{j\in\nn}\lz_{i,j}a_{i,j}$$ in $L^v(Q_0)$.
Then $g\in{\widehat{hk}_{v,w}(Q_0)}$.
Now, we show that $g$ is independent of the above decomposition of $\widetilde{g}$.
Suppose that there exists another representation,
$$\widetilde{g}=\sum_{i\in\nn}\sum_{j\in\nn}\mu_{i,j}b_{i,j}\quad\mathrm{in}\,\,(jn_{(v',w',0)_{0}}(Q_0))^{\ast},$$
where $\{b_{i,j}\}_{i,j\in\nn}$ are local $(v,w,0)_{0}$-atoms supported in
subcubes $\{R_{i,j}\}_{i,j\in\nn}$ of $Q_0$,
$\{R_{i,j}\}_{j\in\nn}$ for any given $i\in\nn$ have pairwise disjoint interiors,
$\{\mu_{i,j}\}_{i,j\in\nn}\subset\mathbb{C}$ and $\sum_{i\in\nn}(\sum_{j\in\nn}|\mu_{i,j}|^v)^{\frac{1}{v}}<\fz$.
Similarly to the estimation of \eqref{a2}, we obtain $L^{v'}(Q_0)\subset jn_{(v',w',0)_{0}}(Q_0)$.
Notice that both $\sum_{i\in\nn}\sum_{j\in\nn}\mu_{i,j}b_{i,j}$
and $\sum_{i\in\nn}\sum_{j\in\nn}\lz_{i,j}a_{i,j}$
converge in $L^v(Q_0)$.
Thus, for any $f\in L^{v'}(Q_0)$,
\begin{align*}
\int_{Q_0}\sum_{i\in\nn}\sum_{j\in\nn}\mu_{i,j}b_{i,j}f
&=\sum_{i\in\nn}\sum_{j\in\nn}\int_{Q_0}\mu_{i,j}b_{i,j}f=\lf\langle \widetilde{g},f\r\rangle\\
&=\sum_{i\in\nn}\sum_{j\in\nn}\int_{Q_0}\lz_{i,j}a_{i,j}f
=\int_{Q_0}\sum_{i\in\nn}\sum_{j\in\nn}\lz_{i,j}a_{i,j}f,
\end{align*}
which implies that
$$\lf\|\sum_{i\in\nn}\sum_{j\in\nn}\mu_{i,j}b_{i,j}-
\sum_{i\in\nn}\sum_{j\in\nn}\lz_{i,j}a_{i,j}\r\|_{L^v(Q_0)}=0.$$
Therefore, $g$ is independent of the choice of $\{\lz_{i,j}\}_{i,j\in\nn}$
and $\{a_{i,j}\}_{i,j\in\nn}$ and hence well defined.
By this, we obtain $\|g\|_{{\widehat{hk}_{v,w}(Q_0)}}\le\|\widetilde{g}\|_{hk_{(v,w,0)_{0}}(Q_0)}$.
This proves ${hk_{(v,w,0)_0}(Q_0)}\subset{\widehat{hk}_{v,w}(Q_0)}$,
which completes the proof of Proposition \ref{prop.hk1eq}.
\end{proof}

\section{Equivalent norms on $hk_{(v,w,s)_{\az}}(\cX)$ \label{sec.eninhk}}

In this section, we first consider the equivalent relations on localized Hardy-kind spaces.
We then study the limit case of localized Hardy-kind spaces.

The following proposition indicates that, for admissible $(v,s,\az)$, $hk_{(v,w,s)_{\az}}(\cX)$ is invariant on $w\in(v,\fz]$.

\begin{proposition}
\label{prop.hkeq1}
Let $v\in(1,\fz)$, $w\in(v,\fz]$, $s\in \zz_{+}$ and $\az\in[0,\fz)$. Then
$hk_{(v,w,s)_{\az}}(\cX)=hk_{(v,\fz,s)_{\az}}(\cX)$ with equivalent norms.
\end{proposition}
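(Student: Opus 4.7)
The plan is to prove the two inclusions separately.

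The inclusion $hk_{(v,\fz,s)_{\az}}(\cX) \subset hk_{(v,w,s)_{\az}}(\cX)$ is immediate: by H\"older's inequality, any local $(v,\fz,s)_{\az}$-atom $a$ supported in a cube $Q$ obeys
$$\|a\|_{L^w(Q)} \le |Q|^{\frac{1}{w}}\|a\|_{L^\fz(Q)} \le |Q|^{\frac{1}{w}-\frac{1}{v}-\az},$$
while its support and conditional moment-vanishing conditions are unchanged; hence $a$ is automatically a local $(v,w,s)_{\az}$-atom, and Definitions \ref{def.poly} and \ref{def.hkvws} yield $\|g\|_{hk_{(v,w,s)_{\az}}(\cX)} \le \|g\|_{hk_{(v,\fz,s)_{\az}}(\cX)}$ for every $g \in hk_{(v,\fz,s)_{\az}}(\cX)$.

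For the reverse inclusion, the key step is to show that every local $(v,w,s)_{\az}$-atom $a$ supported in a cube $Q \subset \cX$ admits a decomposition
$$a = \sum_{k \in \nn}\lambda_k a_k \quad \text{in }\ (jn_{(v',w',s)_{\az}}(\cX))^{\ast},$$
where each $a_k$ is a local $(v,\fz,s)_{\az}$-atom supported in a subcube $Q_k \subseteq Q$ and $\sum_{k\in\nn}|\lambda_k|^v \le C$ for some $C$ independent of $a$ and $Q$. Granted this, any polymer in $\widetilde{hk}_{(v,w,s)_{\az}}(\cX)$, and hence any element of $hk_{(v,w,s)_{\az}}(\cX)$, can be re-expanded into a sum of local $(v,\fz,s)_{\az}$-atoms with controlled coefficients, using Definitions \ref{def.poly} and \ref{def.hkvws} together with Remarks \ref{rem.poly} and \ref{rem.h}.

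The atomic decomposition itself proceeds by a Calder\'on--Zygmund truncation in the spirit of \cite[Proposition 6.4]{DHKY} and \cite[Proposition 1.23]{TYY}. For each $k \in \zz_{+}$, I set $\alpha_k := 2^{k/w}|Q|^{-\frac{1}{v}-\az}$ and let $\{Q_{k,j}\}_j$ be the maximal dyadic subcubes of $Q$ on which $\fint_{Q_{k,j}}|a|^w > \alpha_k^w$, putting $\Omega_k := \bigcup_j Q_{k,j}$. Maximality gives $|a|\lesssim \alpha_k$ off $\Omega_k$, and Chebyshev together with the atom size bound $\|a\|_{L^w(Q)} \le |Q|^{\frac{1}{w}-\frac{1}{v}-\az}$ yields $|\Omega_k| \lesssim 2^{-k}|Q|$. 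Writing $a = \sum_{k\in\zz_{+}} a\mathbf{1}_{\Omega_k\setminus\Omega_{k+1}}$ and regrouping per Whitney cube, on each $Q_{k,j}$ the corresponding piece has essential supremum $\lesssim\alpha_{k+1}$; subtracting its $P^{(s)}_{Q_{k,j},c_0}$-projection (which is zero whenever $\ell(Q_{k,j})\ge c_0$) and redistributing the polynomial correction onto the parent cube produces local $(v,\fz,s)_{\az}$-atoms $a_{k,j}$ with coefficients $|\lambda_{k,j}| \lesssim \alpha_{k+1}|Q_{k,j}|^{\frac{1}{v}+\az}$. Since $|Q_{k,j}|^{v\az}\le |Q|^{v\az}$, summation gives
$$\sum_{k,j}|\lambda_{k,j}|^v \lesssim |Q|^{v\az}\sum_k \alpha_{k+1}^v |\Omega_k| \lesssim \sum_k 2^{-k(1-v/w)} \lesssim 1,$$
using $w>v$ for convergence of the geometric series.

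The principal obstacle is the polynomial bookkeeping: the corrections $P^{(s)}_{Q_{k,j},c_0}$ needed to enforce vanishing moments on small cubes must be telescoped and reabsorbed without spoiling either the $L^\fz$ bound on neighboring pieces or the $\ell^v$ coefficient bound, and one must separately track cubes with $\ell(Q_{k,j})<c_0$ (nontrivial correction) versus $\ell(Q_{k,j})\ge c_0$ (no correction by definition of $P^{(s)}_{Q_{k,j},c_0}$)---this is exactly the subtle point in the cited analogues. Once the decomposition is in place, weak-$\ast$ convergence of $\sum_{k,j}\lambda_{k,j}a_{k,j}$ in $(jn_{(v',w',s)_{\az}}(\cX))^{\ast}$ follows directly from Remark \ref{rem.poly}, and its equality with $a$ is verified by pairing against a dense class of test functions in $jn_{(v',w',s)_{\az}}(\cX)$.
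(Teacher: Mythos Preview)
Your Calder\'on--Zygmund approach matches the paper's, and your per-level estimate $\sum_j|\lambda_{k,j}|^v\lesssim 2^{-k(1-v/w)}$ is exactly the right quantitative input. But the framing of the ``key step'' contains a genuine gap. The cubes produced at different CZ levels are nested, not disjoint, so the decomposition $a=\sum_{k,j}\lambda_{k,j}a_{k,j}$ of a single atom is not a single polymer; the bound $\sum_{k,j}|\lambda_{k,j}|^v\le C$ therefore does not by itself control any $hk$ norm. Even if you deduce from it that $\|a\|_{hk_{(v,\fz,s)_{\az}}(\cX)}\le C$ for each atom (which does follow, via $\sum_k(\sum_j|\lambda_{k,j}|^v)^{1/v}\lesssim\sum_k 2^{-k(1/v-1/w)}<\fz$), you cannot then pass to a general polymer $g=\sum_l\lambda_l a_l$ through Remark~\ref{rem.h}: that route only yields $\|g\|_{hk_{(v,\fz,s)_{\az}}(\cX)}\le C\sum_l|\lambda_l|$, and the $\ell^1$-norm of $\{\lambda_l\}$ is not controlled by its $\ell^v$-norm.

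The paper repairs this by decomposing all atoms of a polymer simultaneously and regrouping the output by CZ level: for fixed $k$ the cubes $\{Q^l_{k,j}\}_{l,j}$ are pairwise disjoint (since $Q^l_{k,j}\subset Q_l$ and the $Q_l$ are already disjoint), so the level-$k$ contribution $g_k$ is itself a polymer with $\|g_k\|_{\widetilde{hk}_{(v,\fz,s)_{\az}}(\cX)}^v\lesssim 2^{-k(1-v/w)}\sum_l|\lambda_l|^v$, and then $\|g\|_{hk_{(v,\fz,s)_{\az}}(\cX)}\le\sum_{k\ge0}\|g_k\|_{\widetilde{hk}_{(v,\fz,s)_{\az}}(\cX)}\lesssim(\sum_l|\lambda_l|^v)^{1/v}$. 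Two further remarks: the paper's decomposition lemma (Lemma~\ref{lem.hkeq1.2}) already outputs pieces $A_{k,j}\in L^{\fz}_s(Q_{k,j})$ with \emph{full} vanishing moments, so the polynomial bookkeeping you flag as ``the principal obstacle'' is absorbed into that lemma and no redistribution onto parent cubes is needed at levels $k\ge1$ (only $P^{(s)}_{Q_l}(a_l)$ must be merged into the level-$0$ piece on $Q_l$); and for the easy inclusion you must also invoke Proposition~\ref{prop.jneq1} to identify the ambient duals $(jn_{(v',1,s)_{\az}}(\cX))^{\ast}$ and $(jn_{(v',w',s)_{\az}}(\cX))^{\ast}$, since the two $hk$ spaces are by definition subspaces of different dual spaces.
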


\begin{remark}
By Propositions \ref{prop.jneq1}, \ref{prop.hkeq1} and Theorem \ref{theo.dual}, we conclude that,
for any $p\in(1,\fz)$, $q\in[1,\fz)$, $s\in\zz_{+}$ and $\az\in[0,\fz)$,
the predual space of $jn_{(p,q,s)_{\az}}(\cX)$ is $hk_{(p',q',s)_{\az}}(\cX)$,
where ${1}/{p}+{1}/{p'}=1={1}/{q}+{1}/{q'}$.
\end{remark}

To prove Proposition \ref{prop.hkeq1}, we need the following two technical lemmas.
The proof of the following lemma can be found in \cite[Lemma 4.3]{TYY}.
\begin{lemma}
\label{lem.hkeq1.1}
Let $w\in[1,\fz)$, $\widetilde{C}\in(1,\fz)$, $\gamma\in(0,\fz)$, $Q_0$ be a cube in $\rn$ and $f\in L^w(Q_0)$.
For any $k\in\nn$, let $\mu_k:=\widetilde{C}^k\gamma$. Then
$$\sum^\fz_{k=1}\mu^w_k\lf|\lf\{x\in Q_0:\,\,\lf|f(x)\r|>\mu_k\r\}\r|
\le \frac{1}{1-\widetilde{C}^{-w}}\|f\|^w_{L^w(Q_0)}.$$
\end{lemma}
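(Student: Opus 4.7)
The plan is to prove this by a standard layer-cake/Abel-summation trick on the super-level sets, which reduces the discrete geometric sum $\sum_k \mu_k^w |E_k|$ to the integral $\int_{Q_0}|f|^w$ via the geometric structure $\mu_k=\widetilde{C}^k\gamma$.

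First I would set $E_k:=\{x\in Q_0:\,|f(x)|>\mu_k\}$ and observe that $\{E_k\}_{k\in\nn}$ is a decreasing sequence of measurable subsets of $Q_0$, since $\mu_k<\mu_{k+1}$. Defining the disjointified rings $F_k:=E_k\setminus E_{k+1}=\{x\in Q_0:\,\mu_k<|f(x)|\le \mu_{k+1}\}$ for $k\in\nn$, I have $E_k=\bigsqcup_{j=k}^{\fz} F_j$ up to a set of measure zero (the set where $|f|=\fz$ is null because $f\in L^w(Q_0)$). Swapping the order of summation,
\[
\sum_{k=1}^{\fz}\mu_k^w |E_k|
= \sum_{k=1}^{\fz}\mu_k^w\sum_{j=k}^{\fz}|F_j|
= \sum_{j=1}^{\fz}|F_j|\sum_{k=1}^{j}\mu_k^w.
\]

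Next I would exploit the geometric nature of $\{\mu_k\}$: since $\mu_k=\widetilde{C}^k\gamma$,
\[
\sum_{k=1}^{j}\mu_k^w
= \mu_j^w\sum_{k=1}^{j}\widetilde{C}^{(k-j)w}
\le \mu_j^w\sum_{l=0}^{\fz}\widetilde{C}^{-lw}
= \frac{\mu_j^w}{1-\widetilde{C}^{-w}},
\]
where convergence of the geometric series uses $\widetilde{C}>1$ and $w\ge 1$. Combined with the pointwise bound $|f(x)|>\mu_j$ on $F_j$, which gives $\mu_j^w|F_j|\le\int_{F_j}|f|^w$, I obtain
\[
\sum_{k=1}^{\fz}\mu_k^w|E_k|
\le \frac{1}{1-\widetilde{C}^{-w}}\sum_{j=1}^{\fz}\mu_j^w|F_j|
\le \frac{1}{1-\widetilde{C}^{-w}}\sum_{j=1}^{\fz}\int_{F_j}|f|^w
\le \frac{1}{1-\widetilde{C}^{-w}}\|f\|_{L^w(Q_0)}^w,
\]
using that the $F_j$'s are pairwise disjoint subsets of $Q_0$ in the last step. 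This yields the desired inequality.

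There is no real obstacle here: the lemma is a discretized Cavalieri/layer-cake estimate and all three ingredients (monotone decomposition of super-level sets, geometric series summation via $\widetilde{C}>1$, and the trivial pointwise bound $\mu_j<|f|$ on $F_j$) are standard. The only minor care needed is to justify the interchange of summation (which is automatic as all terms are nonnegative, by Tonelli) and to confirm that the null set $\{|f|=\fz\}$ contributes nothing.
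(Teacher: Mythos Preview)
Your proof is correct; the Abel-summation/layer-cake argument you give is the standard one. The paper itself does not prove this lemma but simply cites \cite[Lemma 4.3]{TYY}, and your argument is essentially the expected proof of that statement.
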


Let $s\in\zz_{+}$ and $Q\subsetneqq\rn$ be a cube. In what follows,
the \emph{symbol} $L^{\fz}_s(Q)$ denotes the set of all functions $f\in L^{\fz}(Q)$
such that, for any $\beta\in \zz_{+}^n$ and $|\beta|\le s$, $\int_Qf(x)x^{\beta}dx=0$.
We also denote by the symbol
$M^{(d)}_Q$ the \emph{maximal function related to the dyadic subcubes of $Q$}, namely,
for any $f\in L^1(Q)$ and $x\in Q$,
$$M^{(d)}_Q(f)(x):=\sup_{Q_{(x)}\ni x}\fint_{Q_{(x)}}|f(y)|\,dy,$$
where the supremum is taken over all dyadic subcubes $Q_{(x)}$ containing $x$ in $Q$.
The following decomposition lemma contains a refinement of classical Calder\'{o}n--Zygmund decompositions;
see \cite[Lemma 4.4]{TYY} and also \cite[Lemma 6.5]{DHKY} for its proof.
\begin{lemma}
\label{lem.hkeq1.2}
Let $s\in\zz_{+}$, $\widetilde{C}\in(2^n,\fz)$, $Q$ be a cube in $\rn$, $f\in L^1(Q)$ and $\gamma\geq\fint_Q|f|$. Then
\begin{align}\label{f1}
f-P^{(s)}_{Q}(f)=\sum^\fz_{k=0}\sum_{j\in\nn}A_{k,j}
\end{align}
almost everywhere,
where $A_{k,j}\in L^{\fz}_s(Q_{k,j})$ and $\|A_{k,j}\|_{L^\fz(Q_{k,j})}\le 2^{n+1}C_{(s)}\widetilde{C}^{k+1}\gamma$,
$\{Q_{k,j}\}_{j\in\nn}$ is a collection of interior pairwise disjoint cubes in $Q$ satisfying
$Q_{0,1}=Q$, $Q_{0,j}=\emptyset$ for any $j\in\nn\setminus\{1\}$ and
$$\bigcup_{j\in\nn}Q_{k,j}=\left\{x\in Q:\,\, M^{(d)}_Qf(x)>\widetilde{C}^k\gamma\right\},\quad\forall\, k\in\nn,$$
where $C_{(s)}$ is the same constant as in (\ref{eq.dxsC}).
Furthermore, if $f\in L^w(Q)$, then \eqref{f1} holds true in $(JN_{(v',w',s)_{\az}}(\mathcal{ Y}))^{\ast}$
for any $v\in(1,\fz)$, $w\in(1,\fz]$ and $\alpha\in[0,\fz)$, where $\mathcal Y$ is $\rn$ or a cube which contains $Q$,
and ${1}/{v}+{1}/{v'}=1={1}/{w}+{1}/{w'}$.
\end{lemma}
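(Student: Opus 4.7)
My plan is to perform a level-by-level Calder\'on--Zygmund stopping time on $Q$ using the dyadic maximal function $M^{(d)}_Q$, and then to assemble $A_{k,j}$ as a two-term combination of the polynomial projections $P^{(s)}_{Q_{k,j}}(f)$ that telescopes across consecutive stopping levels. Concretely, for each $k\in\nn$ I would set $E_k:=\{x\in Q:\,M^{(d)}_Qf(x)>\widetilde{C}^k\gamma\}$ and let $\{Q_{k,j}\}_{j\in\nn}$ be the maximal dyadic subcubes of $Q$ with $\fint_{Q_{k,j}}|f|>\widetilde{C}^k\gamma$. The hypothesis $\gamma\ge\fint_Q|f|$ excludes $Q$ itself from this family, maximality yields pairwise disjointness and $\bigcup_jQ_{k,j}=E_k$, and the dyadic ``parent stops first'' rule gives $\fint_{Q_{k,j}}|f|\le 2^n\widetilde{C}^k\gamma<\widetilde{C}^{k+1}\gamma$ thanks to $\widetilde{C}>2^n$. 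With $Q_{0,1}:=Q$ and $Q_{0,j}:=\emptyset$ for $j\ge 2$, I would define
\[
A_{k,j}:=\lf[f-P^{(s)}_{Q_{k,j}}(f)\r]\mathbf{1}_{Q_{k,j}\setminus E_{k+1}}+\sum_{\ell:\,Q_{k+1,\ell}\subset Q_{k,j}}\lf[P^{(s)}_{Q_{k+1,\ell}}(f)-P^{(s)}_{Q_{k,j}}(f)\r]\mathbf{1}_{Q_{k+1,\ell}},
\]
which is automatically supported in $Q_{k,j}$.

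Three verifications then produce the a.e.\ identity. For the moment condition $A_{k,j}\in L^{\fz}_s(Q_{k,j})$, splitting $\int_{Q_{k,j}}A_{k,j}(x)x^\beta\,dx$ with $|\beta|\le s$ over $Q_{k,j}\setminus E_{k+1}$ and $\bigsqcup_\ell Q_{k+1,\ell}$ and applying the defining orthogonalities of both $P^{(s)}_{Q_{k,j}}(f)$ on $Q_{k,j}$ and $P^{(s)}_{Q_{k+1,\ell}}(f)$ on each $Q_{k+1,\ell}$ makes the integrals rewrite and cancel pairwise, giving $0$. For the $L^{\fz}$ bound, on $Q_{k,j}\setminus E_{k+1}$ dyadic Lebesgue differentiation forces $|f|\le\widetilde{C}^{k+1}\gamma$ a.e., while \eqref{eq.dxsC} together with the parent-stop estimate yields $|P^{(s)}_{Q_{k,j}}(f)|\le C_{(s)}2^n\widetilde{C}^k\gamma$, so $|A_{k,j}|\le 2C_{(s)}\widetilde{C}^{k+1}\gamma\le 2^{n+1}C_{(s)}\widetilde{C}^{k+1}\gamma$; on each $Q_{k+1,\ell}$, the same projection estimate applied at both scales yields $|A_{k,j}|\le 2C_{(s)}2^n\widetilde{C}^{k+1}\gamma$. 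For the a.e.\ identity, at a.e.\ $x\in Q$ there is a maximal $k_x\ge 0$ with $x\in E_{k_x}\setminus E_{k_x+1}$ (setting $E_0:=Q$); along the nested chain $Q=Q_{0,1}\supset Q_{1,j_1}\supset\cdots\supset Q_{k_x,j_{k_x}}$, the partial sum $\sum_{k=0}^{k_x}A_{k,j_k}(x)$ telescopes precisely to $f(x)-P^{(s)}_Q(f)(x)$, while $A_{k,j}(x)=0$ for $k>k_x$.

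For the final clause, under the extra hypothesis $f\in L^w(Q)$ I would upgrade \eqref{f1} to weak-$\ast$ convergence in $(JN_{(v',w',s)_{\az}}(\mathcal Y))^{\ast}$ by pairing with an arbitrary $g\in JN_{(v',w',s)_{\az}}(\mathcal Y)$: the vanishing moments justify $\int A_{k,j}g=\int A_{k,j}[g-P^{(s)}_{Q_{k,j}}(g)]$, and a double H\"older estimate (first with exponents $w,w'$ on $Q_{k,j}$, then with $v,v'$ over $(k,j)$) produces a bilinear bound whose coefficient factor $\sum_{k,j}|Q_{k,j}|^{1+v\az}(\fint_{Q_{k,j}}|A_{k,j}|^w)^{v/w}$ is controlled by a constant multiple of $\|f\|_{L^w(Q)}^{v}$ via Lemma \ref{lem.hkeq1.1} together with the weak-$(w,w)$ bound for $M^{(d)}_Q$, and whose test factor is majorized by $\|g\|_{JN_{(v',w',s)_{\az}}(\mathcal Y)}$. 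Absolute summability and dominated convergence then promote the a.e.\ identity to the claimed weak-$\ast$ identity. I expect the main technical obstacle to be the bookkeeping behind the moment condition in the first verification: the simultaneous use of the defining orthogonalities on two different dyadic scales must produce exact cancellation with no inequality to spare, and the telescoping argument behind the a.e.\ identity (together with the observation that the remaining set $\bigcap_kE_k$ is Lebesgue-null) is really the only delicate place in the otherwise routine estimates.
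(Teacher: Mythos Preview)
Your construction of the Calder\'on--Zygmund pieces $A_{k,j}$, the verification of the vanishing moments, the $L^\infty$ bound, and the telescoping argument for the a.e.\ identity are all correct and are exactly the standard refinement of the Calder\'on--Zygmund decomposition that the paper is quoting from \cite[Lemma 4.4]{TYY} and \cite[Lemma 6.5]{DHKY}. On this part there is nothing to add.

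The weak-$\ast$ clause, however, has two soft spots. First, applying H\"older ``with $v,v'$ over $(k,j)$'' cannot produce $\|g\|_{JN_{(v',w',s)_\alpha}(\mathcal Y)}$ on the test side, because the family $\{Q_{k,j}\}_{k,j}$ is not pairwise disjoint across different levels $k$; the $JN$ norm only controls the sum over a \emph{fixed} level. You must therefore apply H\"older over $j$ for each fixed $k$ and then sum in $k$. Second, once you do this, your coefficient factor becomes essentially $\sum_{k}(\widetilde C^{k}\gamma)^v|E_k|$, and Lemma~\ref{lem.hkeq1.1} together with the $L^w$ bound for $M^{(d)}_Q$ only controls $\sum_k(\widetilde C^k\gamma)^w|E_k|$. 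The passage from exponent $w$ to exponent $v$ succeeds when $v\le w$ (and in particular in the regime $w>v$ actually used in Proposition~\ref{prop.hkeq1}), but the lemma is stated for \emph{arbitrary} $v\in(1,\infty)$, and for $v>w$ the absolute summability you invoke can genuinely fail.

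A cleaner route that covers all $v$ and $w$ at once is to bypass absolute summability and estimate the remainder directly: your telescoping shows that
\[
R_K:=\bigl(f-P^{(s)}_Q(f)\bigr)-\sum_{k=0}^{K}\sum_{j}A_{k,j}
=\sum_{\ell}\bigl[f-P^{(s)}_{Q_{K+1,\ell}}(f)\bigr]\mathbf 1_{Q_{K+1,\ell}},
\]
which satisfies $\int_Q R_K\,x^\beta\,dx=0$ for $|\beta|\le s$ and, by \eqref{eq.dxsC} and dominated convergence, $\|R_K\|_{L^w(Q)}\lesssim\|f\mathbf 1_{E_{K+1}}\|_{L^w(Q)}\to 0$. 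Pairing with $g-P^{(s)}_Q(g)\in L^{w'}(Q)$ and using a single-cube instance of the $JN$ norm then gives $\langle R_K,g\rangle\to 0$ for every $g\in JN_{(v',w',s)_\alpha}(\mathcal Y)$, which is the desired weak-$\ast$ convergence without any restriction relating $v$ and $w$.
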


\begin{proof}[Proof of Proposition \ref{prop.hkeq1}]
Let $v\in(1,\fz)$, ${1}/{v}+{1}/{v'}=1$,
 $w\in(v,\fz)$, ${1}/{w}+{1}/{w'}=1$, $s\in \zz_{+}$ and $\az\in[0,\fz)$.
Clearly, a local $(v,\fz,s)_{\az}$-atom is also a local $(v,w,s)_{\az}$-atom.
By this and Proposition \ref{prop.jneq1},
we have ${hk_{(v,\fz,s)_{\az}}(\mathcal{X})}\subset{hk_{(v,w,s)_{\az}}(\mathcal{X})}$.

Now, we show ${hk_{(v,w,s)_{\az}}(\mathcal{X})}\subset{hk_{(v,\fz,s)_{\az}}(\mathcal{X})}$.
To this end, we first let $g\in \widetilde{hk}_{(v,w,s)_{\az}}(\mathcal{X})$.
By Definition \ref{def.poly}, we know that
there exists a sequence of local $(v,w,s)_{\az}$-atoms $\{a_l\}_{l\in\nn}$
supported, respectively, in interior pairwise disjoint cubes $\{Q_l\}_{l\in\nn}$,
and $\{\lz_l\}_{l\in\nn}\subset\mathbb{C}$ with
$(\sum_{l\in\nn}|\lz_l|^v)^{\frac{1}{v}}\le 2\|g\|_{\widetilde{hk}_{(v,w,s)_{\az}}(\mathcal{X})}$ such that
$g=\sum_{l\in\nn}\lz_la_l$ in $(jn_{(v',w',s)_{\az}}(\cX))^{\ast}$.
Without the loss of generality, we may assume $\|a_l\|_{L^1(Q_l)}\neq 0$.

Let $C_0\in(2^n,\fz)$ and $\gamma_l:=(\fint_{Q_l}|a_l|^w)^{\frac{1}{w}}$.
By Lemma \ref{lem.hkeq1.2} and Proposition \ref{rem.jnandJN}(i),
we have
\begin{align}
\label{eq.hkeq1.1}
a_l=P^{(s)}_{Q_l}(a_l)+\sum^\fz_{k=0}\sum_{j\in\nn}A^l_{k,j}\,\,\mathrm{in}\,\,(jn_{(v',w',s)_{\az}}(\cX))^{\ast},
\end{align}
where $A^l_{k,j}\in L^{\fz}_s(Q^l_{k,j})$ and
\begin{align}\label{a3}
\lf\|A^l_{k,j}\r\|_{L^\fz(Q^l_{k,j})}\le 2^{n+1}C_{(s)}C_0^{k+1}\gamma_l,
\end{align}
$\{Q^l_{k,j}\}_{j\in\nn}$ is a collection of interior pairwise disjoint cubes in $Q_l$ satisfying
$Q^l_{0,1}=Q_l$, $Q^l_{0,j}=\emptyset$ for any $j\in\nn\setminus\{1\}$ and
\begin{align}\label{a4}
\bigcup_{j\in\nn}Q^l_{k,j}=\left\{x\in Q_l:\,\, M^{(d)}_{Q_l}a_l(x)>C_0^k\gamma_l\right\},\quad\forall\, k\in\nn,
\end{align}
where $C_{(s)}$ is the same constant as in (\ref{eq.dxsC}).

For any $l\in\nn$, let $\widetilde{a}^l_0:=[2^{n+2}C_{(s)}C_0]^{-1}[A^l_{0,1}+P^{(s)}_{Q_l}(a_l)]$.
From \eqref{eq.dxsC} and the H\"{o}lder inequality, it follows that
$$\lf\|P^{(s)}_{Q_l}(a_l)\r\|_{L^{\fz}(Q_l)}\le C_{(s)}\fint_{Q_l}|a_l|\le C_{(s)}C_0\gamma_l.$$
By this and \eqref{a3}, we obtain
$$\|\widetilde{a}^l_0\|_{L^{\fz}(Q_l)}\le\lf[2^{n+2}C_{(s)}C_0\r]^{-1}
\lf[\|A^l_{0,1}\|_{L^{\fz}(Q_l)}+\lf\|P^{(s)}_{Q_l}(a_l)\r\|_{L^{\fz}(Q_l)}\r]\le\gamma_l\le
|Q_l|^{-\frac{1}{v}-\az}.$$
Combining this and the definitions of $A^l_{0,1}$ and $P^{(s)}_{Q_l}(a_l)$,
we know that, for any $l\in\nn$, $\widetilde{a}^l_0$ is a local $(v,\fz,s)_{\az}$-atom supported in $Q_l$.
From this, Remark \ref{rem.poly} and
$(\sum_{l\in\nn}|\lz_l|^v)^{\frac{1}{v}}\le 2\|g\|_{\widetilde{hk}_{(v,w,s)_{\az}}(\mathcal{X})},$
we deduce that $\sum_{l\in\nn}2^{n+2}C_{(s)}C_0\lz_l\widetilde{a}^l_0$ converges in $(jn_{(v',1,s)_{\az}}(\cX))^{\ast}$.
Let $g_0:=\sum_{l\in\nn}2^{n+2}C_{(s)}C_0\lz_l\widetilde{a}^l_0$
in $(jn_{(v',1,s)_{\az}}(\cX))^{\ast}.$
Then
\begin{align}\label{eq.hkeq1.2}
\|g_0\|_{\widetilde{hk}_{(v,\fz,s)_{\az}}(\cX)}
\ls\lf(\sum_{l\in\nn}|\lz_l|^v\r)^{\frac{1}{v}}
\ls\|g\|_{\widetilde{hk}_{(v,w,s)_{\az}}(\cX)}.
\end{align}
For any $k,j\in\nn$,
let $\widetilde{a}^l_{k,j}:=[2^{n+1}C_{(s)}C_0^{k+1}\gamma_l]^{-1}|Q^l_{k,j}|^{-\frac{1}{v}-\az}A^l_{k,j}$.
By \eqref{a3}, we find that $\widetilde{a}^l_{k,j}$ is a local $(v,\fz,s)_{\az}$-atom supported in $Q^l_{k,j}$.
Since $Q^{\ell}_{k,j}\subset Q_{\ell}$, from \eqref{a4} and the H\"{o}lder inequality, we deduce that
\begin{align*}
\mathrm{I}:= &\,\sum^{\fz}_{k=1}\lf\{\sum_{l,j\in\nn}\lf[2^{n+1}C_{(s)}C_0^{k+1}
\gamma_l\lf|Q^l_{k,j}\r|^{\frac{1}{v}+\az}|\lz_l|\r]^v\r\}^{\frac{1}{v}}\\
\le &\, 2^{n+1}C_{(s)}C_0\sum^\fz_{k=1}C_0^{k(1-\frac{w}{v})}\lf[\sum_{l\in\nn}\lf(C^k_0\gamma_l\r)^w
      \lf|\lf\{x\in Q_l:\,\,M^{(d)}_{Q_l}a_l(x)>C^k\gamma_l\r\}\r|
      \lf|Q_l\r|^{v\az}\gamma^{v-w}_l\lf|\lz_l\r|^v\r]^{\frac{1}{v}}\\
\le &\, 2^{n+1}C_{(s)}C_0\lf(\frac{C_0^{\frac{v-w}{v-1}}}{1-C_0^{\frac{v-w}{v-1}}}\r)^{1-\frac{1}{v}}\lf[\sum^{\fz}_{k=1}
      \sum_{l\in\nn}\lf(C^k_0\gamma_l\r)^w\lf|\lf\{x\in Q_l:\,\,M^{(d)}_{Q_l}a_l(x)>C^k\gamma_l\r\}\r|
      \lf|Q_l\r|^{v\az}\gamma^{v-w}_l\lf|\lz_l\r|^v\r]^{\frac{1}{v}}.
\end{align*}
By this, Lemma \ref{lem.hkeq1.1} and
the boundedness of $M^{(d)}_{Q_l}$ on $L^w(Q_l)$, we conclude that
$$\mathrm{I}\ls\lf[\sum_{l\in\nn}\lf\|M^{(d)}_{Q_l}a_l\r\|^w_{L^w(Q_l)}\lf|Q_l\r|^{v\az}
\gamma^{v-w}_l\lf|\lz_l\r|^v\r]^{\frac{1}{v}}
\ls\lf[\sum_{l\in\nn}\lf\|a_l\r\|^w_{L^w(Q_l)}\lf|Q_l\r|^{v\az}\gamma^{v-w}_l\lf|\lz_l\r|^v\r]^{\frac{1}{v}},$$
which, together with the definition of $\gamma_l$, the fact that,
for any $l\in\nn$, $a_{l}$ is a local $(v,w,s)_{\az}$-atom
and $(\sum_{l\in\nn}|\lz_l|^v)^{\frac{1}{v}}\le 2\|g\|_{\widetilde{hk}_{(v,w,s)_{\az}}(\mathcal{X})},$
implies that
\begin{align}\label{eq.hkeq1.3}
 \mathrm{I}\ls\lf[\sum_{l\in\nn}\lf|Q_l\r|^{1-\frac{w}{v}-w\az+v\az+(v-w)(-\frac{1}{v}-\az)}\lf|\lz_l\r|^v\r]^{\frac{1}{v}}
      \ls\|g\|_{\widetilde{hk}_{(v,w,s)_{\az}}(\cX)}.
\end{align}
From this and Remark \ref{def.poly}, it follows that, for any $k\in\nn$,
$\sum_{l,j\in\nn}2^{n+1}C_{(s)}C_0^{k+1}\gamma_l|Q^l_{k,j}|^{\frac{1}{v}+\az}\lz_l\widetilde{a}^l_{k,j}$
converges in $(jn_{(v',1,s)_{\az}}(\cX))^{\ast}$.
For any $k\in\nn$, let
$g_k:=\sum_{l,j\in\nn}2^{n+1}C_{(s)}C_0^{k+1}\gamma_l|Q^l_{k,j}|^{\frac{1}{v}+\az}\lz_l\widetilde{a}^l_{k,j}$
in $(jn_{(v',1,s)_{\az}}(\cX))^{\ast}.$
By \eqref{eq.hkeq1.3}, we have
 \begin{align}\label{eq.hkeq1.4}
 \sum^{\fz}_{k=1}\|g_k\|_{\widetilde{hk}_{(v,\fz,s)_{\az}}(\cX)}\ls \|g\|_{\widetilde{hk}_{(v,w,s)_{\az}}(\cX)}.
 \end{align}
Then, by the definition of $\widetilde{a}^l_{k,j}$, we obtain
\begin{align}\label{c1}
g_k=\sum_{l,j\in\nn}\lz_lA^l_{k,j}\qquad \mbox{in}\ (jn_{(v',1,s)_{\az}}(\cX))^{\ast}.
\end{align}
From \eqref{eq.hkeq1.3} and the same argument as that used in the estimation of \eqref{r1},
we deduce that, for any $f\in jn_{(v',1,s)_{\az}}(\cX)$,
\begin{align*}
  \sum^{\fz}_{k=1}\sum_{j,l\in\nn}\lf|\int_{Q^l_{k,j}}\lz_lA^l_{k,j}f\r|
  &=\sum^{\fz}_{k=1}\sum_{j,l\in\nn}\lf|\int_{Q^l_{k,j}}2^{n+1}C_{(s)}C_0^{k+1}\gamma_l
  \lf|Q^l_{k,j}\r|^{\frac{1}{v}+\az}\lz_l\widetilde{a}^l_{k,j}f\r|\\
  &\le \sum^{\fz}_{k=1}\lf\{\sum_{l,j\in\nn}\lf[2^{n+1}C_{(s)}C_0^{k+1}\gamma_l\lf|Q^l_{k,j}\r|^{\frac{1}{v}+\az}|
  \lz_l|\r]^v\r\}^{\frac{1}{v}}
       \lf\|f\r\|_{jn_{(v',1,s)_{\az}}(\cX)}\\
  &\ls \|g\|_{\widetilde{hk}_{(v,w,s)_{\az}}(\cX)}\lf\|f\r\|_{jn_{(v',1,s)_{\az}}(\cX)}
       <\fz.
\end{align*}
By this, \eqref{c1}, the definition of $\widetilde{a}^l_0$, \eqref{eq.hkeq1.1} and Proposition \ref{prop.jneq1},
we find that, for any $f\in jn_{(v',1,s)_{\az}}(\cX)$,
\begin{align*}
  \sum^{\fz}_{k=0}\lf\langle g_k,f\r\rangle
  &=\sum_{l\in\nn}\int_{Q_0}2^{n+2}C_{(s)}C_0\lz_l\widetilde{a}^l_0f+
       \sum^{\fz}_{k=1}\sum_{l,j\in\nn}\int_{Q_0}\lz_lA^l_{k,j}f\\
  &=\sum_{l\in\nn}\int_{Q_l}\lz_l\lf[P^{(s)}_{Q_l}(a_l)+A^l_{0,1}\r]f
        +\sum_{l\in\nn}\sum^{\fz}_{k=1}\sum_{j\in\nn}\int_{Q_l}\lz_lA^l_{k,j}f\\
  &=\sum_{l\in\nn}\int_{Q_l}\lz_la_lf=\lf\langle g,f\r\rangle.
\end{align*}
Thus, $g=\sum^{\fz}_{k=0}g_k$ in $(jn_{(v',1,s)_{\az}}(\cX))^{\ast}$,
which, combined with \eqref{eq.hkeq1.2} and \eqref{eq.hkeq1.4}, implies that
\begin{align}\label{eq.hkeq1.6}
 \|g\|_{{hk_{(v,\fz,s)_{\az}}(\cX)}}\le \sum^{\fz}_{k=0}
\|g_k\|_{{\widetilde{hk}_{(v,\fz,s)_{\az}}(\cX)}}\ls \|g\|_{\widetilde{hk}_{(v,w,s)_{\az}}(\cX)}.
\end{align}

Now, for any $G\in hk_{(v.w.s)_{\az}}(\mathcal{X})$, by Definition \ref{def.hkvws},
we find a sequence $\{g_i\}_{i\in\nn}\subset \widetilde{hk}_{(v,w,s)_{\az}}(\cX)$ such that
 $\sum_{i\in\nn}\|g_i\|_{\widetilde{hk}_{(v,w,s)_{\az}}(\cX)}\le 2\|G\|_{hk_{(v.w.s)_{\az}}(\cX)}$ and
$$G=\sum_{i\in\nn}g_i\quad \mathrm{in}\,\,(jn_{(v',w',s)_{\az}}(\cX))^{\ast}.$$
From Proposition \ref{prop.jneq1}, we deduce that
$\sum_{i\in\nn}g_i$ converges in $(jn_{(v',1,s)_{\az}}(\cX))^{\ast}$.
By this, Remark \ref{rem.h} and \eqref{eq.hkeq1.6}, we conclude that
$$\|G\|_{{hk}_{(v,\fz,s)_{\az}}(\cX)}\le\sum_{i\in\nn}\|g_i\|_{{hk}_{(v,\fz,s)_{\az}}(\cX)}
\ls\sum_{i\in\nn}\|g_i\|_{\widetilde{hk}_{(v,w,s)_{\az}}(\cX)}\ls\|G\|_{{hk}_{(v,w,s)_{\az}}(\cX)}.$$
Therefore, $G\in{hk}_{(v,\fz,s)_{\az}}(\cX)$ and hence ${hk}_{(v,w,s)_{\az}}(\cX)\subset {hk}_{(v,\fz,s)_{\az}}(\cX)$.
This finishes the proof of Proposition \ref{prop.hkeq1}.
\end{proof}

\begin{remark}
Let $1<v_1<v_2<\fz$, ${1}/{v_1}+{1}/{v'_1}=1={1}/{v_2}+{1}/{v'_2}$,
$w\in(1,\fz]$, ${1}/{w}+{1}/{w'}=1$, $\az\in[0,\fz)$,
$s\in\zz_{+}$ and
$Q_0\subsetneqq\rn$ be a cube. Then we claim that
$hk_{(v_2,w,s)_{\az}}(Q_0)\subset hk_{(v_1,w,s)_{\az}}(Q_0)$. Indeed,
let $g\in hk_{(v_2,w,s)_{\az}}(Q_0)$. Assume that
$$g=\sum_{i\in\nn}\sum_{j\in\nn}\lz_{i,j}a_{i,j}\quad \mathrm{in}\,\, (jn_{(v'_2,w',s)_{\az}}(Q_0))^{\ast},$$
where $\{a_{i,j}\}_{i,j\in\nn}$ are local $(v_2,w,s)_{\az}$-atoms supported, respectively,
in subcubes $\{Q_{i,j}\}_{i,j\in\nn}$ of $Q_0$,
$\{Q_{i,j}\}_{j\in\nn}$ for any given $i\in\nn$ is a collection of interior pairwise disjoint cubes,
$\{\lambda_{i,j}\}_{i,j\in\nn}\subset\mathbb{C}$ and
$$\sum_{i\in\nn}\lf(\sum_{j\in\nn}|\lz_{i,j}|^{v_2}\r)^{\frac{1}{v_2}}<\fz.$$
By Remark \ref{rem1}(i), we obtain
$$g=\sum_{i\in\nn}\sum_{j\in\nn}\lz_{i,j}a_{i,j}=\sum_{i\in\nn}\sum_{j\in\nn}\lf|Q_{i,j}\r|^{\frac{1}{v_1}
-\frac{1}{v_2}}\lz_{i,j}\lf|Q_{i,j}\r|^{\frac{1}{v_2}-\frac{1}{v_1}}a_{i,j}
\quad \mathrm{in}\,\, (jn_{(v'_1,w',s)_{\az}}(Q_0))^{\ast}.$$
Observe that $|Q_{i,j}|^{\frac{1}{v_2}-\frac{1}{v_1}}a_{i,j}$ is a local $(v_1,w,s)_{\az}$-atom supported in $Q_{i,j}$.
From this, the H\"{o}lder inequality and the interior pairwise disjointness
of $\{Q_{i,j}\}_{j\in\nn}$ for any given $i\in\nn$, it follows that
\begin{align*}
  \|g\|_{hk_{(v_1,w,s)_{\az}}(Q_0)}
  &\le\sum_{i\in\nn}\lf(\sum_{j\in\nn}\lf|Q_{i,j}\r|^{1-\frac{v_1}{v_2}}
      \lf|\lz_{i,j}\r|^{v_1}\r)^{\frac{1}{v_1}}
  \le\sum_{i\in\nn}\lf(\sum_{j\in\nn}\lf|Q_{i,j}\r|\r)^{\frac{1}{v_1}-\frac{1}{v_2}}
      \lf(\sum_{j\in\nn}\lf|\lz_{i,j}\r|^{v_2}\r)^{\frac{1}{v_2}}\\
   &\le\lf|Q_0\r|^{\frac{1}{v_1}-\frac{1}{v_2}}\sum_{i\in\nn}\lf(\sum_{j\in\nn}\lf|\lz_{i,j}\r|^v\r)^{\frac{1}{v}},
\end{align*}
which implies that
$$\|g\|_{hk_{(v_1,w,s)_{\az}}(Q_0)}\le\lf|Q_0\r|^{\frac{1}{v_1}-\frac{1}{v_2}} \|g\|_{hk_{(v_2,w,s)_{\az}}(Q_0)}.$$
This proves the above claim.
\end{remark}

The following proposition might be viewed as a counterpart of Proposition \ref{prop.jneq2}.

\begin{proposition}
\label{prop.hkeq2}
Let $v\in(1,\fz)$ and $s\in\zz_{+}$.
\begin{itemize}
\item[{\rm(i)}]
If $w\in(1,v]$ and $Q_0\subsetneqq\rn$ is a cube, then
$hk_{(v,w,s)_{0}}(Q_0)=|Q_0|^{\frac{1}{v}-\frac{1}{w}}L^w(Q_0)$
with equivalent norms.
\item[{\rm{(ii)}}]
$L^v(\rn)=hk_{(v,v,s)_{0}}(\rn)$ with equivalent norms.
\end{itemize}
\end{proposition}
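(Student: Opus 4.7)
The plan is to prove both parts by two-sided inclusions. By Proposition \ref{prop.hkc}, the choice of $c_0\in(0,\ell(\cX))$ defining $hk_{(v,w,s)_{0}}(\cX)$ is immaterial, so in each case I pick $c_0$ small enough that the cubes used as atomic supports have side length at least $c_0$; the vanishing moment condition in Definition \ref{def.atom}(iii) then plays no role.

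For the forward inclusions, given a decomposition $g=\sum_{i,j}\lz_{i,j}a_{i,j}$ with $\{Q_{i,j}\}_{j\in\nn}$ interior pairwise disjoint for each $i$, the disjointness and the atomic size bound yield
$$\lf\|\sum_{j\in\nn}\lz_{i,j}a_{i,j}\r\|_{L^w(\cX)}^{w}\le\sum_{j\in\nn}\lf|\lz_{i,j}\r|^{w}\lf|Q_{i,j}\r|^{1-w/v}.$$
In (i), if $w<v$, the H\"older inequality with exponents $v/w$ and $v/(v-w)$ combined with $\bigcup_jQ_{i,j}\subset Q_0$ bounds this by $|Q_0|^{(v-w)/v}(\sum_j|\lz_{i,j}|^v)^{w/v}$; if $w=v$, the factor $|Q_{i,j}|^{1-w/v}=1$ gives the bound $\sum_j|\lz_{i,j}|^v$ directly, an observation which also handles (ii). Taking $w$-th roots, summing in $i$ by the Minkowski inequality, and infimizing over decompositions produces the forward estimates $\|g\|_{L^w(Q_0)}\le|Q_0|^{1/w-1/v}\|g\|_{hk_{(v,w,s)_{0}}(Q_0)}$ and $\|g\|_{L^v(\rn)}\le\|g\|_{hk_{(v,v,s)_{0}}(\rn)}$.

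For the reverse inclusions, I construct explicit atomic representations. In (i), for nonzero $f\in L^w(Q_0)$ the single function $a:=|Q_0|^{1/w-1/v}f/\|f\|_{L^w(Q_0)}$ is a local $(v,w,s)_{0}$-atom supported in $Q_0$ with $f=\lz a$ and $\lz=|Q_0|^{1/v-1/w}\|f\|_{L^w(Q_0)}$, yielding $\|f\|_{hk_{(v,w,s)_{0}}(Q_0)}\le|\lz|$. In (ii), after tiling $\rn$ by interior pairwise disjoint cubes $\{R_i\}_{i\in\nn}$ with $\ell(R_i)\ge c_0$, the natural one-atom-per-cube decomposition with $a_i:=\|f\mathbf{1}_{R_i}\|_{L^v(R_i)}^{-1}f\mathbf{1}_{R_i}$ (or $a_i:=0$ if the denominator vanishes) and $\lz_i:=\|f\mathbf{1}_{R_i}\|_{L^v(R_i)}$ satisfies $\sum_i|\lz_i|^v=\|f\|_{L^v(\rn)}^v$, so $g:=\sum_i\lz_ia_i\in\widetilde{hk}_{(v,v,s)_{0}}(\rn)$ by Definition \ref{def.poly} with the desired norm bound. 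The main obstacle will be verifying that this $g$ coincides with $f$ inside $(jn_{(v',v',s)_{0}}(\rn))^{\ast}$: by Proposition \ref{prop.jneq2}(ii), $jn_{(v',v',s)_{0}}(\rn)=L^{v'}(\rn)$, so the duality pairing is plain integration, and since $\sum_{i=1}^{N}\lz_ia_i=f\mathbf{1}_{\bigcup_{i\le N}R_i}\to f$ in $L^v(\rn)$, H\"older's inequality gives $\langle g,h\rangle=\int_{\rn}fh$ for every $h\in L^{v'}(\rn)$. In (i) the analogous identification is immediate from the singleton expression $f=\lz a$.
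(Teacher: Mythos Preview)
Your approach is essentially the paper's: the same single-atom construction for the reverse inclusion in (i), the same tiling decomposition for (ii), and the same disjoint-support $L^w$-estimate for the forward inclusions (the paper phrases your H\"older step as Jensen's inequality, but the computation is identical).

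There is one point you gloss over in the forward direction. For $g\in hk_{(v,w,s)_0}(\cX)$, the object $g$ is by definition a linear functional on $jn_{(v',w',s)_0}(\cX)$, and the decomposition $g=\sum_{i,j}\lz_{i,j}a_{i,j}$ converges only in that weak-$\ast$ sense. Your estimate shows the same series converges in $L^w(\cX)$ to some $\tilde g$, but before you can write ``$\|g\|_{L^w}$'' and infimize over decompositions you must check that $\tilde g$ actually represents the functional $g$ (and in particular is independent of the decomposition). The paper does this explicitly via Proposition~\ref{prop.jneq2}(i) (respectively (ii)): since $jn_{(v',w',s)_0}(Q_0)=L^{w'}(Q_0)$ with equivalent norms, $L^w$-convergence implies convergence in $(jn_{(v',w',s)_0}(Q_0))^\ast$, so the two limits coincide. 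You carry out exactly this identification for the reverse inclusion in (ii), so the fix is immediate---just invoke Proposition~\ref{prop.jneq2} in the forward direction as well.
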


\begin{proof}
Let $v\in(1,\fz)$, ${1}/{v}+{1}/{v'}=1$, $s\in\zz_{+}$ and $Q_0\subsetneqq\rn$ be a cube.

First, we show \rm{(i)}.
Let $w\in(1,v]$ and ${1}/{w}+{1}/{w'}=1$.
Clearly, $|Q_0|^{\frac{1}{v}-\frac{1}{w}}{L^w(Q_0)}\subset hk_{(v,w,s)_0}(Q_0)$.
We only need to show $hk_{(v,w,s)_0}(Q_0)\subset |Q_0|^{\frac{1}{v}-\frac{1}{w}}{L^w(Q_0)}$.
Let $g\in{hk_{(v,w,s)_{0}}(Q_0)}$. By Definition \ref{def.hkvws}, we know that
\begin{align}\label{f2}
g=\sum_{i\in\nn}\sum_{j\in\nn}\lz_{i,j}a_{i,j}\quad \mathrm{in}\,\, (jn_{(v',w',s)_0}(Q_0))^{\ast},
\end{align}
where $\{a_{i,j}\}_{i,j\in\nn}$ are local $(v,w,s)_{0}$-atoms supported,
respectively, in subcubes $\{Q_{i,j}\}_{i,j\in\nn}$ of $Q_0$,
$\{Q_{i,j}\}_{j\in\nn}$ for any given $i\in\nn$ have pairwise disjoint interiors,
$\{\lambda_{i,j}\}_{i,j\in\nn}\subset\mathbb{C}$ and
$$\sum_{i\in\nn}\lf(\sum_{j\in\nn}|\lz_{i,j}|^v\r)^{\frac{1}{v}}<\fz.$$
Now, we claim that $\sum_{i\in\nn}\sum_{j\in\nn}\lz_{i,j}a_{i,j}$ converges in $L^w(Q_0)$.
Since $\{Q_{i,j}\}_{j\in\nn}$ for any given $i\in\nn$ are interior pairwise disjoint cubes,
for any $i\in\nn$, letting $g_i:=\sum_{j\in\nn}\lz_{i,j}a_{i,j}$,
then $g_i$ is well defined pointwisely.
By the Jensen inequality and $\frac{v}{w}\ge 1$, we obtain
\begin{align}\label{f3}
  \|g_i\|^v_{L^w(Q_0)}
  &=\lf(\sum_{j\in\nn}\lf|Q_{i,j}\r|\fint_{Q_{i,j}}\lf|\lz_{i,j}a_{i,j}\r|^w\r)^{\frac{v}{w}}\\
  &\le\lf|Q_0\r|^{\frac{v}{w}-1}\sum_{j\in\nn}\lf|Q_{i,j}\r|
     \lf(\fint_{Q_{i,j}}|\lz_{i,j}a_{i,j}|^w\r)^{\frac{v}{w}}\le \lf|Q_0\r|^{\frac{v}{w}-1}\sum_{j\in\nn}\lf|\lz_{i,j}\r|^v.\notag
\end{align}
From this and the interior pairwise disjointness of $\{Q_{i,j}\}_{j\in\nn}$,
it follows that $g_i=\sum_{j\in\nn}\lz_{i,j}a_{i,j}$ in $L^w(Q_0)$,
which, together with $\sum_{i\in\nn}(\sum_{j\in\nn}|\lz_{i,j}|^v)^{\frac{1}{v}}<\fz$,
proves the above claim. By this claim, \eqref{f2} and Proposition \ref{prop.jneq2}(i), we conclude that
$g=\sum_{i\in\nn}\sum_{j\in\nn}\lz_{i,j}a_{i,j}$ in $L^w(Q_0)$.
From this and \eqref{f3}, it follows that
$$\|g\|_{L^w(Q_0)}\le  \sum_{i\in\nn}\lf\|\sum_{j\in\nn}\lz_{i,j}a_{i,j}\r\|_{L^w(Q_0)}
\le\lf|Q_0\r|^{\frac{1}{w}-\frac{1}{v}}\sum_{i\in\nn}\lf(\sum_{j\in\nn}|\lz_{i,j}|^v\r)^{\frac{1}{v}},$$
which implies that
$$\lf\|g\r\|_{L^w(Q_0)}\le|Q_0|^{\frac{1}{w}-\frac{1}{v}}\| g\|_{hk_{(v,w,s)_0}(Q_0)}.$$
Therefore, $hk_{(v,w,s)_0}(Q_0)\subset |Q_0|^{\frac{1}{v}-\frac{1}{w}}{L^w(Q_0)}$. This proves \rm{(i)}.

For (ii), let $c_0\in(0,\fz)$, $g\in L^v(\rn)$ and $\{R_i\}_{i\in\nn}\subset \rn$
be interior pairwise disjoint cubes such that
$\ell(R_i)\in[c_0,\fz)$ and $\rn=\bigcup_{i\in\nn}R_i$.
Let
$$g_i:=\left\{\begin{array}{l@{\quad\quad\mbox{when}\hs}l}
0&\| g\mathbf{1}_{R_i}\|_{L^v(R_i)}=0,\\
\dfrac{g\mathbf{1}_{R_i}}{\| g\mathbf{1}_{R_i}\|_{L^v(R_i)}}& \| g\mathbf{1}_{R_i}\|_{L^v(R_i)}\neq 0.
\end{array}
\right.$$
Observe that $\{g_i\}_{i\in\nn}$ are local $(v,v,s)_{0}$-atoms supported, respectively, in $\{R_i\}_{i\in\nn}$ and
$$g=\sum_{i\in\nn}\|g\mathbf{1}_{R_i}\|_{L^v(R_i)}g_i$$ in $L^v(Q_0)$ and
also in $(jn_{(v',v',s)_0}(\rn))^{\ast}$ because of Proposition \ref{prop.jneq2}\rm{(ii)}.
By Definition \ref{def.hkvws}, we have
$$\| g\|_{hk_{(v,v,s)_0}(\rn)}\le
\lf[\sum_{i\in\nn}\|g\mathbf{1}_{R_i}\|^v_{L^v(R_i)}\r]^{\frac{1}{v}}=\| g\|_{L^v(\rn)}.$$
This proves $L^v(\rn)\subset hk_{(v,v,s)_0}(\rn)$.
Now, we show $hk_{(v,v,s)_0}(\rn)\subset L^v(\rn)$.
Let $g\in{hk_{(v,v,s)_{0}}(\rn)}$. By Definition \ref{def.hkvws}, we know that
\begin{align}\label{f4}
g=\sum_{i\in\nn}\sum_{j\in\nn}\lz_{i,j}a_{i,j}\quad \mathrm{in}\,\, (jn_{(v',v',s)_0}(\rn))^{\ast},
\end{align}
where $\{a_{i,j}\}_{i,j\in\nn}$ are local $(v,v,s)_{0}$-atoms supported, respectively, in cubes $\{Q_{i,j}\}_{i,j\in\nn}$,
$\{Q_{i,j}\}_{j\in\nn}$ for any given $i\in\nn$ have pairwise disjoint interiors,
$\{\lambda_{i,j}\}_{i,j\in\nn}\subset\mathbb{C}$ and $\sum_{i\in\nn}(\sum_{j\in\nn}|\lz_{i,j}|^v)^{\frac{1}{v}}<\fz$.
Observe that $\sum_{i\in\nn}\sum_{j\in\nn}\lz_{i,j}a_{i,j}$ converges in $L^{v}(\rn)$.
From this, \eqref{f4} and Proposition \ref{prop.jneq2}\rm{(ii)},
it follows that $g=\sum_{i\in\nn}\sum_{j\in\nn}\lz_{i,j}a_{i,j}$ in $L^v(\rn)$.
By this, we have
$$
\|g\|_{L^v(\rn)}
\le\sum_{i\in\nn}\lf\|\sum_{j\in\nn}\lz_{i,j}a_{i,j}\r\|_{L^v(\rn)}
=\sum_{i\in\nn}\lf(\sum_{j\in\nn}\int_{Q_{i,j}}
\lf|\lz_{i,j}a_{i,j}\r|^v\r)^{\frac{1}{v}}
\le\sum_{i\in\nn}\lf(\sum_{j\in\nn}\lf|\lz_{i,j}\r|^v\r)^{\frac{1}{v}},$$
which, combined with the arbitrariness of the decomposition of $g$, implies that $g\in L^v(\rn)$ and
$\|g\|_{L^v(\rn)}\le\|g\|_{hk_{(v,v,s)_0}(\rn)}$.
Thus, $hk_{(v,v,s)_0}(\rn)\subset L^v(\rn)$.
This finishes the proof of (ii) and hence of Proposition \ref{prop.hkeq2}.
\end{proof}

Recall that, for any given $q\in (1,\fz]$,
the \emph{atomic localized Hardy space} $h^{1,q}_{at}(\cX)$
is defined  to be the set of all $f\in L^1(\cX)$ such that
$f=\sum_{j\in\nn}\lz_ja_j$ in  $L^1(\cX)$, where $\{a_j\}_{j\in\nn}$ is a sequence of local $(1,q,0)_0$-atoms
supported, respectively, in cubes $\{Q_j\}_{j\in\nn}\subset\cX$, and $\{\lz_j\}_{j\in\nn}\subset\mathbb{C}$ with
$\sum_{j\in\nn}|\lz_j|<\fz$.
Let $\|g\|_{h^{1,q}_{at}(\cX)}:=\inf\sum_{j\in\nn}|\lz_j|$,
where the infimum is taken over all the above decompositions of $g$.

Finally, we consider the relation between $hk_{(v,w,s)_{\az}}(\cX)$ and the atomic localized Hardy space.

\begin{proposition}
\label{prop.hkandhp}
Let $v\in(1,\fz)$, $w\in(1,\fz]$ and $Q_0\subsetneqq\rn$ be a cube.
Then
$$\bigcup_{v\in(1,\fz)}hk_{(v,w,0)_{0}}(Q_0)\subset h^{1,w}_{at}(Q_0).$$
Moreover, if $g\in \bigcup_{v\in(1,\fz)}hk_{(v,w,0)_0}(Q_0)$, then
$$\|g\|_{h^{1,w}_{at}(Q_0)}\le \liminf_{v\to 1^+}\|g\|_{hk_{(v,w,0)_0}(Q_0)},$$
where $v\to 1^+$ means that $v\in(1,\fz)$ and $v\to 1$.
\end{proposition}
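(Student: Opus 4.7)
The plan is to realize every $g\in hk_{(v,w,0)_0}(Q_0)$ with $v>1$ as an $L^1$-convergent atomic series for $h^{1,w}_{at}(Q_0)$: absorbing a factor $|Q_{i,j}|^{1-1/v}$ into the coefficients will turn every local $(v,w,0)_0$-atom into a local $(1,w,0)_0$-atom, and H\"older's inequality combined with the pairwise-interior-disjointness of the supporting cubes will control the resulting $\ell^1$-norm of coefficients by a factor that tends to $1$ as $v\to 1^+$.

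The first step is to invoke Proposition \ref{prop.hk1eq}, which applies whenever $v<w$ (in particular for all $v$ close enough to $1$), to upgrade the weak-$\ast$ decomposition implicit in the definition of $hk_{(v,w,0)_0}(Q_0)$ to an $L^v(Q_0)$-convergent one: given $\ez\in(0,\fz)$, one obtains
\[
g=\sum_{i\in\nn}\sum_{j\in\nn}\lz_{i,j}a_{i,j}\quad\text{in}\ L^v(Q_0),
\]
where $\{a_{i,j}\}$ are local $(v,w,0)_0$-atoms supported, respectively, in subcubes $\{Q_{i,j}\}$ of $Q_0$ with $\{Q_{i,j}\}_{j\in\nn}$ interior pairwise disjoint for each fixed $i$, and $\sum_{i\in\nn}(\sum_{j\in\nn}|\lz_{i,j}|^v)^{1/v}\le\|g\|_{hk_{(v,w,0)_0}(Q_0)}+\ez$. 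Since $|Q_0|<\fz$, this series also converges in $L^1(Q_0)$. Setting $\widetilde a_{i,j}:=|Q_{i,j}|^{-(1-1/v)}a_{i,j}$ and $\mu_{i,j}:=|Q_{i,j}|^{1-1/v}\lz_{i,j}$, a direct check of Definition \ref{def.atom} (using $|Q_{i,j}|^{1/w-1/v}=|Q_{i,j}|^{1-1/v}|Q_{i,j}|^{1/w-1}$ for the size condition, and preservation of $\int a=0$ under scaling for the vanishing-integral clause when $\ell(Q_{i,j})<c_0$) shows that each $\widetilde a_{i,j}$ is a local $(1,w,0)_0$-atom supported in $Q_{i,j}$.

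The core estimate is then a one-line application of H\"older's inequality with exponents $v$ and $v'$ in the index $j$, invoking the identity $(1-1/v)v'=1$ and the interior-disjointness bound $\sum_{j\in\nn}|Q_{i,j}|\le|Q_0|$:
\[
\sum_{j\in\nn}|\mu_{i,j}|
=\sum_{j\in\nn}|\lz_{i,j}|\,|Q_{i,j}|^{1-1/v}
\le\lf(\sum_{j\in\nn}|\lz_{i,j}|^v\r)^{1/v}\lf(\sum_{j\in\nn}|Q_{i,j}|\r)^{1/v'}
\le|Q_0|^{1/v'}\lf(\sum_{j\in\nn}|\lz_{i,j}|^v\r)^{1/v}.
\]
Summing in $i$, letting $\ez\to 0^+$, and taking the infimum over all admissible decompositions of $g$ yields $\|g\|_{h^{1,w}_{at}(Q_0)}\le|Q_0|^{1/v'}\|g\|_{hk_{(v,w,0)_0}(Q_0)}$, which already proves the set-theoretic inclusion $\bigcup_{v\in(1,\fz)}hk_{(v,w,0)_0}(Q_0)\subset h^{1,w}_{at}(Q_0)$. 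Taking $\liminf_{v\to 1^+}$ on both sides and noting $|Q_0|^{1/v'}=|Q_0|^{(v-1)/v}\to 1$ then delivers the claimed norm estimate.

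The only delicate point is aligning the modes of convergence: $g$ is originally defined as a weak-$\ast$ limit in $(jn_{(v',w',0)_0}(Q_0))^\ast$, whereas the atomic decomposition in $h^{1,w}_{at}(Q_0)$ is required to converge in $L^1(Q_0)$. Proposition \ref{prop.hk1eq} is exactly what resolves this gap by promoting the representation to one converging in $L^v(Q_0)$, which in turn passes to $L^1(Q_0)$ because $|Q_0|<\fz$. Apart from that, every step is routine.
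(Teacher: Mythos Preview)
Your proof is correct and follows essentially the same approach as the paper: invoke Proposition~\ref{prop.hk1eq} to obtain an $L^v(Q_0)$-convergent atomic decomposition, pass to $L^1(Q_0)$-convergence, rescale each $(v,w,0)_0$-atom by $|Q_{i,j}|^{1/v-1}$ to obtain a $(1,w,0)_0$-atom, and then apply H\"older's inequality together with $\sum_j|Q_{i,j}|\le|Q_0|$ to bound the $\ell^1$-norm of the new coefficients by $|Q_0|^{1-1/v}$ times the original $\ell^v$-norm. Your explicit remark that Proposition~\ref{prop.hk1eq} requires $v<w$ (automatically satisfied for $v$ near $1$) is a welcome clarification that the paper leaves implicit.
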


\begin{proof}
Let $g\in hk_{(v,w,0)_0}(Q_0)$. From Proposition \ref{prop.hk1eq}, it follows that $g\in \widehat{hk}_{v,w}(Q_0)$.
By Definition \ref{def.hk1}, we know that
$$g=\sum_{i\in\nn}\sum_{j\in\nn}\lz_{i,j}a_{i,j}\quad \mathrm{in}\,\, L^v(Q_0),$$
where $\{a_{i,j}\}_{i,j\in\nn}$ are local $(v,w,0)_0$-atoms supported, respectively,
in subcubes $\{Q_{i,j}\}_{i,j\in\nn}$ of $Q_0$,
$\{Q_{i,j}\}_{j\in\nn}$ for any given $i\in\nn$ is a collection of interior pairwise disjoint cubes,
$\{\lambda_{i,j}\}_{i,j\in\nn}\subset\mathbb{C}$ and $\sum_{i\in\nn}(\sum_{j\in\nn}|\lz_{i,j}|^v)^{\frac{1}{v}}<\fz$.
By this and the embedding $L^v(Q_0)\subset L^1(Q_0)$, we obtain
$$g=\sum_{i\in\nn}\sum_{j\in\nn}\lz_{i,j}a_{i,j}\quad \mbox{in}\ \ L^1(Q_0).$$
Notice that, for any $i,j\in\nn$, $|Q_{i,j}|^{\frac{1}{v}-1}a_{i,j}$ is a local $(1,w,0)_0$-atom supported in $Q_{i,j}$.
From the H\"{o}lder inequality and the interior pairwise disjointness of $\{Q_{i,j}\}_{j\in\nn}$ for any given $i\in\nn$, we deduce that
\begin{align*}\|g\|_{h^{1,w}_{at}(Q_0)}
&\le\sum_{i\in\nn}\sum_{j\in\nn}\lf|Q_{i,j}\r|^{1-\frac{1}{v}}\lf|\lz_{i,j}\r|
   \le\sum_{i\in\nn}\lf(\sum_{j\in\nn}\lf|Q_{i,j}\r|\r)^{1-\frac{1}{v}}
   \lf(\sum_{j\in\nn}\lf|\lz_{i,j}\r|^v\r)^{\frac{1}{v}}\\
&=\lf|Q_0\r|^{1-\frac{1}{v}}\sum_{i\in\nn}
\lf(\sum_{j\in\nn}\lf|\lz_{i,j}\r|^v\r)^{\frac{1}{v}},
\end{align*}
which implies that
$$\|g\|_{h^{1,w}_{at}(Q_0)}\le
\lf|Q_0\r|^{1-\frac{1}{v}}\|g\|_{hk_{(v,w,0)_0}(Q_0)}.$$
Therefore, $g\in h^{1,w}_{at}(Q_0)$ and $\|g\|_{h^{1,w}_{at}(Q_0)}\le \liminf_{v\to 1^+}\|g\|_{hk_{(v,w,0)_0}(Q_0)}.$
This finishes the proof of Proposition \ref{prop.hkandhp}.
\end{proof}

\begin{remark}
\label{rem.hkandhpques}
Let $v\in(1,\fz)$, $w\in(1,\fz]$ and $Q_0\subsetneqq\rn$ be a cube.
\begin{itemize}
\item[\rm{(i)}]
It is interesting to ask whether or not $\bigcup_{v\in(1,\fz)}hk_{(v,w,0)_{0}}(Q_0)= h^{1,w}_{at}(Q_0)$ and
to find the condition on $g$ such that $\|g\|_{h^{1,w}_{at}(Q_0)}=\lim_{v\to 1^+}\|g\|_{hk_{(v,w,0)_0}(Q_0)}$.
\item[\rm{(ii)}]
Let $\az\in(0,\fz)$ and $s\in\zz_{+}$.
As $v\to 1^+$,
the relation between the atomic localized Hardy space (see \cite{G})
and $hk_{(v,w,s)_{\az}}(Q_0)$ is still unknown.
\end{itemize}
\end{remark}

\bigskip

\noindent Jingsong Sun and Dachun Yang (Corresponding author)

\medskip

\noindent Laboratory of Mathematics and Complex Systems
(Ministry of Education of China),
School of Mathematical Sciences, Beijing Normal University,
Beijing 100875, People's Republic of China

\smallskip

\noindent{\it E-mails:} \texttt{jingsongsun@mail.bnu.edu.cn} (J. Sun)

\noindent\phantom{{\it E-mails:}} \texttt{dcyang@bnu.edu.cn} (D. Yang)

\bigskip

\noindent Guangheng Xie

\medskip

\noindent School of Mathematics and Statistics, Central
South University, Changsha 410075, People's Republic of China

\smallskip

\noindent{\it E-mail:} \texttt{xieguangheng@csu.edu.cn} (G. Xie)


\begin{thebibliography}{99}

\bibitem{ABKY}
 D. Aalto, L. Berkovits, O. E. Kansanen and H. Yue, John--Nirenberg lemmas for a doubling
measure, Studia Math. 204 (2011), 21-37.

\vspace{-0.3cm}

\bibitem{BKM}
L. Berkovits, J. Kinnunen and J. M. Martell,
Oscillation estimates, self-improving results and good-$\lambda$ inequalities,
J. Funct. Anal. 270 (2016), 3559-3590.

\vspace{-0.3cm}

\bibitem{BB}
 A. Brudnyi and Y. Brudnyi, On Banach structure of multivariate BV spaces I, arXiv:
1806.08824.

\vspace{-0.3cm}

\bibitem{C1}
S. Campanato, Propriet\`{a} di una famiglia di spazi funzionali, Ann. Scuola Norm. Sup. Pisa
(3) 18 (1964), 137-160.

\vspace{-0.3cm}

\bibitem{C2}
S. Campanato, Su un teorema di interpolazione di G. Stampacchia, Ann. Scuola Norm. Sup.
Pisa (3) 20 (1966), 649-652.

\vspace{-0.3cm}

\bibitem{Ch}
D.-C. Chang,
The dual of Hardy spaces on a bounded domain in $\textbf{R}^n$,
Forum Math. 6 (1994), 65-81.

\vspace{-0.3cm}

\bibitem{CDS}
D.-C. Chang, G. Dafni and E. M. Stein,
Hardy spaces, BMO, and boundary value problems for the Laplacian on a smooth domain in $\textbf{R}^n$,
Trans. Amer. Math. Soc. 351 (1999), 1605-1661.

\vspace{-0.3cm}

\bibitem{CW}
R. R. Coifman and G. Weiss, Extensions of Hardy spaces and their use in analysis, Bull.
Amer. Math. Soc. 83 (1977), 569-645.

\vspace{-0.3cm}

\bibitem{DHKY}
G. Dafni, T. Hyt\"{o}nen, R. Korte and H. Yue,
The space $JN_p$ : nontriviality and duality,
J. Funct. Anal. 275 (2018), 577-603.

\vspace{-0.3cm}

\bibitem{DY}
G. Dafni and H. Yue,
Some characterizations of local bmo and $h^1$ on metric measure spaces,
Anal. Math. Phys. 2 (2012), 285-318.

\vspace{-0.3cm}

\bibitem{FS}
C. Fefferman and E. M. Stein, $H^p$ spaces of several variables, Acta Math. 129 (1972),
137-193.

\vspace{-0.3cm}

\bibitem{G}
D. Goldberg,
A local version of real Hardy spaces,
Duke Math. J. 46 (1979), 27-42.

\vspace{-0.3cm}

\bibitem{HNV}
 R. Hurri-Syrj\"{a}nen, N. Marola and A. V.
 V\"{a}h\"{a}kangas, Aspects of local-to-global results, Bull.
Lond. Math. Soc. 46 (2014), 1032-1042.

\vspace{-0.3cm}

\bibitem{JN}
F. John and L. Nirenberg,
On functions of bounded mean oscillation,
Comm. Pure Appl. Math. 14 (1961), 415-426.

\vspace{-0.3cm}

\bibitem{JSW}
A. Jonsson, P. Sj\"ogren and H. Wallin,
Hardy and Lipschitz spaces on subsets of ${\bf R}^n$,
Studia Math. 80 (1984), 141-166.

\vspace{-0.3cm}

\bibitem{LY}
Y. Liang and D. Yang,
Musielak--Orlicz Campanato spaces and applications, J. Math. Anal.
Appl. 406 (2013), 307-322.

\vspace{-0.3cm}

\bibitem{Lu}
S. Lu,
Four Lectures on Real $H^P$ Spaces, World Scientific, Singapore, 1995.

\vspace{-0.3cm}

\bibitem{MS}
 N. Marola and O. Saari, Local to global results for spaces of BMO type, Math. Z. 282 (2016),
473-484.

\vspace{-0.3cm}

\bibitem{M}
A. Miyachi,
$H^p$ spaces over open subsets of $R^n$,
Studia Math. 95 (1990), 205-228.

\vspace{-0.3cm}

\bibitem{S}
 E. M. Stein,
Harmonic Analysis: Real-Variable Methods, Orthogonality, and Oscillatory Integrals.
Princeton University Press, Princeton, NJ, 1993.

\vspace{-0.3cm}

\bibitem{TW}
M. H. Taibleson and G. Weiss,
The molecular characterization of certain Hardy spaces, In:
Representation Theorems for Hardy Spaces, Ast\'{e}risque 77, pp. 67-149, Soc. Math. France, Paris, 1980.

\vspace{-0.3cm}

\bibitem{TYY}
J. Tao, D. Yang and W. Yuan,
John--Nirenberg--Campanato spaces,
arXiv: 1901.03831.

\vspace{-0.3cm}

\bibitem{YY}
D. Yang and S. Yang,
Local Hardy spaces of Musielak--Orlicz type and their applications,
Sci. China Math. 55 (2012), 1677-1720.

\vspace{-0.3cm}

\bibitem{YYZ}
D. Yang, D. Yang, and Y. Zhou,
Localized Morrey--Campanato spaces on metric measure spaces and applications to Schr\"{o}dinger operators,
Nagoya Math. J. 198 (2010), 77-119.

\vspace{-0.3cm}

\bibitem{YZ}
D. Yang and Y. Zhou,
Localized Hardy spaces $H^1$ related to admissible functions on RD-spaces and applications to Schr\"{o}dinger operators,
Trans. Amer. Math. Soc. 363 (2011), 1197-1239.

\end{thebibliography}
\end{document}